\newcommand \C[1]{{\mathcal #1}}
\newcommand \ovl[1]{{\overline {#1}}}
\newcommand \bb[1]{{\mathbb #1}}
\newcommand \wti[1]{{\widetilde {#1}}}
\newcommand\fg{\mathfrak g}
\newcommand \bA{{\mathbb A}}
\newcommand \bC{{\mathbb C}}
\newcommand \bF{{\mathbb F}}
\newcommand \bH{{\mathbb H}}
\newcommand \bR{{\mathbb R}}
\newcommand \bZ{{\mathbb Z}}
\newcommand\one{1\!\!1}
\newcommand\CB{{\C B}}
\newcommand\CG{{\C G}}
\newcommand\CH{{\C H}}
\newcommand\CO{{\C O}}
\newcommand\ie{{\it i.e.~}}
\newcommand\ep{{\epsilon}}
\newcommand\la{{\lambda}}
\newcommand\om{{\omega}}
\newcommand\al{{\alpha}}
\newcommand\sig{{\sigma}}
\newcommand\sgn{\mathsf{sgn}}
\newcommand\triv{\mathsf{triv}}
\newtheorem{proposition}{Proposition}[subsection]
\newtheorem{corollary}[proposition]{Corollary}
\newtheorem{lemma}[proposition]{Lemma}
\newtheorem{theorem}[proposition]{Theorem}
\newtheorem{conjecture}[proposition]{Conjecture}
\theoremstyle{definition}
\newtheorem{definition}[proposition]{Definition}
\newtheorem{remark}[proposition]{Remark}
\newtheorem{example}[proposition]{Example}
\newcommand{\clrblu}{\color{blue}}
\newcommand{\clrr}{\color{red}}
\newcommand\Hom{\operatorname{Hom}}
\newcommand\Ind{\operatorname{Ind}}
\newcommand\End{\operatorname{End}}
\newcommand\Id{\operatorname{Id}}
\newcommand\Ad{\operatorname{Ad}}
\newcommand\im{\operatorname{Im}}
\newcommand\cc{\textsf{cc}}
\newcommand\St{\textsf{St}}
\newcommand\reg{\textsf{reg}}
\newcommand{\lam}{\lambda}
\newcommand{\ol}{\overline}
\newcommand{\scH}{\mathscr{H}}
\newcommand{\scP}{\mathscr{P}}
\newcommand{\scR}{\mathscr{R}}
\newcommand{\scT}{\mathscr{T}}
\newcommand\deltaunit{\delta_{\Ca I^-B,\delta_{A_0,\one}}}
\numberwithin{equation}{subsection}
\newfont{\lge}{cmmi10 scaled 1640}
\newcommand \Ca[1]{{\mathcal #1}}
\newcommand{\La}{\Lambda}
\newcommand \ev{\mathsf{ev}}
\newcommand\tr{\operatorname{tr}}
\begin{document}

%{\lge{a}}(X)

%\today

\bigskip
\title{Star operations for affine Hecke algebras} 
\author{Dan Barbasch}
       \address[D. Barbasch]{Dept. of Mathematics\\
               Cornell University\\Ithaca, NY 14850}
       \email{barbasch@math.cornell.edu}

\author{Dan Ciubotaru}
        \address[D. Ciubotaru]{Mathematical Institute\\ University of
          Oxford\\ Andrew Wiles Building\\ Oxford, OX2 6GG, UK}
        \email{dan.ciubotaru@maths.ox.ac.uk}

%\thanks{The first author was partially supported by NSF grants DMS-0967386, DMS-0901104 and an NSA-AMS grant. The second author was partially supported by NSF DMS-1302122 and NSA-AMS 111016.}

\begin{abstract}In this paper, we consider the star operations for
  {(graded) affine Hecke algebras} which preserve certain natural
  filtrations. We show that, up to inner conjugation, there are only two
  such star operations for the graded Hecke algebra: the first, denoted $\star$,  corresponds to
  the usual star operation from reductive $p$-adic groups, and the second,
  denoted $\bullet$ can be regarded as the analogue of the compact
  star operation of a real group considered by \cite{ALTV}. We explain how the star operation $\bullet$ appears naturally in the Iwahori-spherical setting of $p$-adic groups via the endomorphism algebras of Bernstein projectives. We also
  prove certain results about the signature of $\bullet$-invariant
  forms and, in particular, about $\bullet$-unitary simple modules.
\end{abstract}

\dedicatory{{To Wilfried Schmid with admiration}}
\maketitle

\bigskip

\bigskip
\setcounter{tocdepth}{1}
\tableofcontents

%${\Huge{\textit{a}}}(X)$

\section{Introduction}\label{sec:1}
{This work is motivated by the results about the unitary dual obtained 
in the case of real reductive groups by Adams, van Leeuwen, Trapa, and Vogan \cite{ALTV} on the one hand, and on the other 
hand, in work and conjectures of Schmid and Vilonen \cite{SV}. The ultimate goal is to 
obtain an algorithm for computing hermitian forms of irreducible modules in 
the case of reductive $p$-adic groups}.

\smallskip

In this paper, {we initiate the study of invariant hermitian forms for 
the (graded) affine Hecke algebras} that appear in the theory of 
{unipotent representations of} reductive $p$-adic groups (Lusztig \cite{L4}). 
There are two main parts to our paper which we explain next.

\subsection{} We introduce  star operations (conjugate-linear
involutive anti-automorphims) for the affine
  Hecke algebra $\C H$ with unequal parameters which preserve natural
  filtrations of $\C H$ (section \ref{sec:2}) and classify them in the corresponding setting of the graded affine Hecke algebras defined by Lusztig \cite{L4}. The classification problem can be viewed as an
  analogue of the 
  problem of classifying the star operations for the enveloping
  algebra $U(\fg)$ of a complex semisimple Lie algebra which preserve
  $\fg.$   Proposition \ref{p:classification} 
  says that essentially there are only two such star operations: $\star$
  and $\bullet$, see Definitions \ref{d:basicstars-affine} and \ref{d:basicstars}. 

The anti-automorphism
  $\star$ is known to correspond to the natural star operation of the
  Hecke algebra of a reductive $p$-adic group, i.e., $f^\star(g)=\overline
  {f(g^{-1})}$, see \cite{BM1,BM2}. 

On the other hand, the anti-automorphism $\bullet$ is the Hecke
algebra analogue of the ``compact star operation'' for $(\fg,
K)$-module studied in \cite{ALTV}. In section \ref{sec:aff}, we explain that
$\bullet$ appears naturally in the study of Iwahori-Hecke algebras via
the projective (non-admissible) modules defined by Bernstein \cite{Ber}. The operation
$\bullet$ for affine Hecke algebras also arises naturally in work of Opdam \cite{O2}.

\subsection{} We study the basic properties of the signature of
$\bullet$-invariant hermitian forms for finite dimensional
$\bH$-modules. We
explain that every irreducible spherical $\bH$-module with real central character
admits an (explicit) nondegenerate $\bullet$-invariant hermitian form,
Proposition \ref{p:bullet-form-minimal}. This result is generalized in \cite{BC4}, where we prove that every simple
$\bH$-module with real central character admits a nondegenerate
$\bullet$-invariant hermitian form, and, moreover, this form can be normalized
canonically (at least when $\bH$ is of geometric type in the sense of
Lusztig) to be positive definite on the lowest $W$-types.
These results can be thought of Hecke algebra analogue of the similar
results about c-invariant forms of $(\fg,K)$-modules
\cite{ALTV}.  The formulations of some of our results were inspired by
the ongoing work of Schmid and Vilonen \cite{SV} aimed at using geometric
methods to study unitarity. For example, the relation between the $\star$- and
$\bullet$-signature characters of the   tempered modules with real
central characters (realized in the  cohomology of Springer fibers) in
section \ref{sec:3.2} is motivated by their work. 

Using the Dirac operator defined in \cite{BCT}, we also prove that the
only $\bullet$-unitary spherical $\bH$-modules, where $\bH$ has equal
parameters, are the ones whose 
parameters lie in the closure of the $\rho^\vee$-cone, Theorem
\ref{t:spherical}. Similar ideas lead to showing 
that every simple $\bH$-module with central character
$\rho^\vee$ has nontrivial Dirac cohomology, and moreover their Dirac
cohomology spaces are essentially the same, Corollary \ref{c:Dirac-coh-rho}.

{\subsection{} {These results were presented in two talks given by Dan Barbasch in 2013}. 
The first was at the TSIMF conference at
Sanya in January, as part of a special session organized by
W. Schmid and S. Miller, the second at the conference in honor of W. Schmid's 
$70^{th}$ birthday in June of 2013. He would like to thank
the organizers of these conferences for providing  the means for
mathematical researchers to have a very productive exhange of ideas.  

\smallskip

The first author was partially supported by NSF grants DMS-0967386, DMS-0901104
and an NSA-AMS grant. The second author was partially supported by NSF DMS-1302122 and NSA-AMS 111016.  }

\section{Star operations: the affine Hecke algebra}\label{sec:aff}

\subsection{The affine Hecke algebra} Let $\C R=(X,R,X^\vee, R^\vee,\Pi)$ be a based root datum \cite{Sp}. In
particular, $X,X^\vee$ are lattices in perfect duality
$\langle~,~\rangle:X\times X^\vee\to\bZ$, $R\subset X\setminus\{0\}$
and $R^\vee\subset X^\vee\setminus\{0\}$ are the (finite) sets of
roots and coroots respectively, and $\Pi\subset R$ is a basis of
simple roots. Let $W$ be the finite Weyl group with set of generators
$S=\{s_\al:\al\in \Pi\}.$ Set $W^e=W\ltimes
X$, the extended affine Weyl group, and $W^a=W\ltimes Q$, the affine
Weyl group, where $Q$ is the root lattice of $R$. 
%Then $W^a$ is normal in $W^e$ and $\Omega:=W^e/W^a\cong X/Q$ is an abelian group. 

The set $R^a=R^\vee\times \bZ\subset X^\vee\times\bZ$ is the set of
affine roots. A basis of simple affine roots is given by
$\Pi^a=(\Pi^\vee\times\{0\})\cup\{(\gamma^\vee,1): \gamma^\vee\in R^\vee
\text{ minimal}\}.$ For every affine root $\mathbf a=(\al^\vee,n)$, let
$s_{\mathbf a}:X\to X$ denote the reflection
$s_{\mathbf a}(x)=x-((x,\al^\vee)+n)\al.$ The affine Weyl group $W^a$
has a set of generators $S^a=\{s_{\mathbf a}: \mathbf a\in \Pi^a\}$.
Let $\ell:W^e\to\bZ$ be the length function with respect to $S^a$.

Let $\mathbf q$ be an indeterminate and let 
$$L: S^a\to \bZ_{\ge 0}$$
be a $W^a$-invariant function.

\begin{definition}[Iwahori presentation] The affine
  Hecke algebra $\C H(\mathbf q)=\C H(\C R,\mathbf q,L)$ associated to the root datum
  $\C R$ and parameters $L$ is the unique associative,
  unital $\bC[\mathbf q,\mathbf q^{-1}]$-algebra with basis $\{T_w: w\in W^e\}$ and
  relations
\begin{enumerate}
\item[(i)] $T_w T_{w'}=T_{ww'},$ for all $w,w'\in W^e$ such that
  $\ell(ww')=\ell(w)+\ell(w')$;
\item[(ii)] $(T_s-\mathbf q^{2L(s)})(T_s+1)=0$ for all $s\in S^a.$
\end{enumerate}
\end{definition}

\subsection{The Bernstein presentation} The affine Hecke algebra admits a second presentation due to Bernstein and Lusztig, see \cite{L1}.

A parameter set for $\C R$ is a pair of functions $(\lambda,\lambda^*)$,
$$
\begin{aligned}
 &\lambda:\Pi\to \bZ_{\ge 0},&\lambda^*:\{\al\in\Pi:\al^\vee\in 2X^\vee\}\to \bZ_{\ge 0}, 
\end{aligned}
$$
such that $\lambda(\al)=\lambda(\al')$ {and
  $\la^*(\al)=\la^*(\al')$} whenever $\al,\al'$ are $W-$conjugate. 
The relation with the parameters in the Iwahori presentation is:
\begin{equation}
\lambda(\al)=L(s_\al),\ \al\in\Pi,\quad \lambda^*(\al)=L(\hat s_\al), \ \al\in\Pi, \al^\vee\in 2 X^\vee,
\end{equation}
where $\hat{\ }$ is the unique nontrivial automorphism of the Dynkin diagram of affine type $\wti C_r$.

\begin{definition}[Bernstein presentation]\label{d:2.1} The affine Hecke algebra
  $\CH(\mathbf q)=\CH^{\lambda,\lambda^*}(\C R,\mathbf q)$ associated to the root datum
  $\C R$ with parameter set $(\lambda,\lambda^*),$ is the
  associative algebra over $\bC[\mathbf q,\mathbf q^{-1}]$ with unit, defined by generators $T_w$, $w\in W$, and $\theta_x$,
  $x\in X$ with relations:
\begin{align}
&(T_{s_\al}+1)(T_{s_\al}-\mathbf q^{2\lambda(\al)})=0,\text{ for all }\al\in\Pi,\\
&T_wT_{w'}=T_{ww'},\text{ for all }w,w'\in W\text{ such that }\ell(ww')=\ell(w)+\ell(w'),\notag\\
&\theta_x\theta_{x'}=\theta_{x+x'},\text{ for all }x,x'\in X,\\
&\theta_x T_{s_\al}-T_{s_\al}\theta_{s_\al(x)}=(\theta_x-\theta_{s_\al(x)}) \C
  (\C G(\al)-1),\text{ where } x\in X, \al\in\Pi,\text{ and } \notag\\
&\C G(\al)=\begin{cases} 
\frac{\theta_\al \mathbf q^{2\lambda(\al)}-1}{\theta_\al-1}, &\text{ if }
\al^\vee\notin 2X^\vee,\\
    \frac{(\theta_\al \mathbf q^{\lambda(\al)+\lambda^*(\al)}-1)(\theta_\al
      \mathbf q^{\lambda(\al)-\lambda^*(\al)}+1)}{\theta_{2\al}-1}, &\text{ if
    }\al^\vee\in 2X^\vee.
 \end{cases}
\end{align}
\end{definition}

We refer to \cite[section 3]{L1} for more details about the relations between the two presentations. 
 
\subsection{Star operations} There are two known star operations on $\C H(\mathbf q)$, i.e., conjugate linear involutive anti-automorphisms, $\star$ and $\bullet$.

\begin{definition}\label{d:basicstars-affine}
\begin{enumerate}
\item In the Iwahori presentation, $\star$ is defined on generators by
\begin{equation}
\mathbf q^\star=\mathbf q,\quad T_w^{\star}=T_{w^{-1}},\ w\in W^e.
\end{equation}
In the Bernstein presentation, the equivalent definition is \cite[section 5]{BM2}:
\begin{equation}
\mathbf q^\star=\mathbf q,\quad T_w^\star=T_{w^{-1}},\ w\in W,\quad \theta_x^\star=T_{w_0}\cdot \theta_{-w_0(x)}\cdot T_{w_0}^{-1},\ x\in X,
\end{equation}
where $w_0$ is the long Weyl group element of $W$.
\end{enumerate}
\item The $\bullet$ operation is defined in the Bernstein presentation
\begin{equation}
\mathbf q^\bullet=\mathbf q,\quad T_w^\bullet=T_{w^{-1}},\ w\in W,\quad \theta_x^\bullet=\theta_x,\ x\in X.
\end{equation}
The fact that this definition extends to a star operation is equivalent with the fact that the algebra $\C H(\mathbf q)$ is isomorphic to its opposite algebra.
\end{definition}
{
\subsection{} The central characters of $\CH$ are 
parametrized by  
$W-$orbits in the torus $\C T:=X\otimes_{\bb Z} \bb C^*.$ 
In \cite{BM2} (following \cite{L1}), 
a filtration of $\C H$ is defined for any finite $W-$invariant 
set $\C O\subset\C T.$ Let $\C O_a$ be the $W$-orbit of an element 
$a\in \C T.$ The $\theta_x$ are interpreted as regular functions 
$R(\C T)$ on $\C T.$ The filtration associated to $\C O_a$ is defined by the powers of 
the ideal $\C I_a:=R(\C O_a)\C H$ generated by 
\begin{equation}
  \label{eq:ideal}
R(\C O_a):=\{f\in R(\C T\times\bb C^\times)\ :\ f(\sig,1)=0\ \text{ for any } 
\sig\in \C O_a\}.
\end{equation}
The graded algebra $\bb H_a$ is then shown, \cite[Proposition 4.4]{L1} and \cite[Proposition 3.2]{BM2} to be a matrix algebra over an 
appropriate graded affine Hecke algebra as in Definition \ref{d:graded}.

Let $\kappa$ be an automorphism (or anti-automorphism) of $\C H$. Then $\kappa$ induces an automorphism of the center $Z(\C H)$, and therefore an isomorphism $$\hat\kappa: \C T\times \bC^\times \to \C T\times \bC^\times.$$ 
We will only consider morphisms $\kappa$ that fix $\mathbf q$, and thus $\hat\kappa$ restricts to an isomorphism of $\C T$ as well.

\begin{definition}
An automorphism (or anti-automorphism) $\kappa$ of $\C H$ is called admissible, if $\kappa(\mathbf q)=\mathbf q$, $\kappa(T_w)=T_w,$ for all $w\in W$,  and 
$\kappa(\C I_a)\subset \C I_{\hat\kappa(a)}$ for all $a\in \C T$. It is clear that the operations $\bullet$ and $\star$ from Definition \ref{d:basicstars-affine} are admissible in this sense.
\end{definition}

In section \ref{sec:2}, we study the analogues of admissible automorphisms and anti-automorphisms for graded affine Hecke algebras. Motivated by the main result of that section, Proposition \ref{p:classification} and the connection between the affine Hecke algebra $\C H$ and the graded Hecke algebras $\bH_a$, we make the following conjecture.

\begin{conjecture}\label{c:class}
Let $\kappa$ be an admissible involutive anti-automorphism.
\begin{enumerate}
\item If $\hat\kappa(a)=a$, for all $a\in \C T$, then $\kappa(\theta_x)=\theta_x^\bullet$, for all $x\in X$. 
\item If $\hat\kappa(a)=a^{-1}$, for all $a\in \C T$, then $\kappa(\theta_x)=\theta_x^\star$, for all $x\in X$.
\end{enumerate}
\end{conjecture}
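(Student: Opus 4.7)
The natural strategy is to deduce the conjecture from its graded analogue, Proposition~\ref{p:classification}, via the reduction relating $\C H$ to the graded affine Hecke algebras $\bb H_a$ (\cite[Proposition 4.4]{L1}, \cite[Proposition 3.2]{BM2}). In both cases of the conjecture the map $\hat\kappa$ preserves every $W$-orbit $\C O_a$, so the admissibility condition $\kappa(\C I_a)\subset\C I_{\hat\kappa(a)}=\C I_a$ lets $\kappa$ descend to an involutive anti-automorphism of the associated graded $\bb H_a$, and hence via the Morita equivalence to an admissible involutive anti-automorphism $\kappa_a$ of a graded affine Hecke algebra $\mathbf{H}_a'$ as in Definition~\ref{d:graded}.

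By Proposition~\ref{p:classification}, up to inner conjugation $\kappa_a$ must be either the graded $\star$ or the graded $\bullet$. Which of the two occurs is forced by the differential of $\hat\kappa$ at points of $\C O_a$: in case~(1), $d\hat\kappa = \mathrm{Id}$ matches the graded $\bullet$ (which fixes the polynomial generators); in case~(2), $d\hat\kappa = -\mathrm{Id}$, together with the longest-element twist encoded in the formula $\theta_x\mapsto T_{w_0}\theta_{-w_0(x)}T_{w_0}^{-1}$, matches the graded $\star$.

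To rigidify the classification and eliminate the inner-twist ambiguity, I would use the hypothesis that $\kappa$ acts as the canonical anti-involution on the finite Hecke subalgebra $H_W$ (the operation $T_w\mapsto T_{w^{-1}}$, common to both $\star$ and $\bullet$). Any inner twist appearing in $\kappa_a$ must then centralize the image of the finite Hecke subalgebra inside $\mathbf{H}_a'$, and in graded affine Hecke algebras that centralizer is exactly the center, so the inner twist is forced to be trivial. Consequently $\kappa_a$ equals $\bullet$ (resp.\ $\star$) on the nose at each $a$. Setting $\phi=\kappa\circ\bullet$ (resp.\ $\phi=\kappa\circ\star$) then produces an algebra automorphism of $\C H$ that restricts to the identity on every associated graded $\bb H_a$; using that $\bigcap_a\C I_a=0$ in $\C H$ (since semisimple central characters are Zariski dense in $\C T/W$), this forces $\phi=\mathrm{Id}$ and hence $\kappa(\theta_x)=\theta_x^\bullet$ (resp.\ $\theta_x^\star$).

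The main obstacle is the rigidification step: after the Morita equivalence $\bb H_a\cong M_n(\mathbf{H}_a')$, the finite Hecke subalgebra of $\C H$ does not sit literally inside $\mathbf{H}_a'$, so the centralizer argument has to be executed carefully through that equivalence. A related subtlety in case~(2) is showing that the inner conjugations appearing in the local graded classification patch to the single global element $T_{w_0}$; a convenient alternative is to match the Bernstein commutation relations of $\kappa(\theta_x)$ and $T_{w_0}\theta_{-w_0(x)}T_{w_0}^{-1}$ directly against the prescribed action of $\kappa$ on $H_W$, and invoke the uniqueness of solutions to those relations.
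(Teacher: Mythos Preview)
This statement is presented in the paper as a \emph{Conjecture}; the authors do not prove it. In fact, the \LaTeX\ source contains a commented-out, unfinished draft (``Proof of Theorem~\ref{t:class}'') that follows exactly the strategy you outline---pass to the graded algebras $\bb H_a$, invoke Proposition~\ref{p:classification} there, and attempt to lift back to $\C H$---with the authors' own annotations marking the points where the argument stalls. So there is no paper proof to compare against; your proposal is the intended route, and you have correctly located its difficulties.

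That said, what you have written is a sketch, not a proof, and the obstacles you flag are genuine rather than routine. The rigidification step is the crux: after the isomorphism $\bb H_a\cong M_n(\bb H_a')$ of \cite[Proposition~3.2]{BM2}, the finite Hecke algebra $H_W\subset\C H$ does not land in $\bb H_a'$ as $\bC[W_a]$, so your ``centralizer of $H_W$ is central'' argument cannot be applied inside $\bb H_a'$ as stated, and the Morita transport introduces exactly the inner ambiguity you are trying to kill. Even granting that each induced $\kappa_a$ coincides with $\bullet$ (resp.\ $\star$) on the nose, your closing step---set $\phi=\kappa\circ\bullet$ and use $\bigcap_a\C I_a=0$---requires knowing that $\phi$ is the identity on the full associated graded $\bb H_a$ (not merely on $\bb H_a'$ up to an inner twist of the matrix factor), which again presupposes the rigidification. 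The authors' draft breaks off at the analogous point, when trying to control the higher $(\mathbf q-1)$-adic corrections after pinning down the leading term of $\kappa(\theta_x)$. In short, you have identified the right approach and the right obstruction, but the obstruction is precisely why the statement remains a conjecture.
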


} 

\subsection{}The operation $\star$ appears naturally in relation with smooth representations of reductive $p$-adic groups. Suppose $\bF$ is a $p$-adic field of characteristic $0$ with residue field $\bF_q$. Let $\CG$ be the group of $\bF$-rational points of a connected reductive algebraic group defined over $\bF.$ Let $I$ be an Iwahori subgroup of $\C G$ and let $\C C_I(\CG)$ be the category of smooth admissible $\C G$-representations which are generated by their $I$-fixed vectors. 

On the other hand, let $\CH(\CG,I)$ be the Iwahori-Hecke algebra, i.e., the convolution algebra (with respect to a Haar measure of $\C G$) of compactly-supported complex valued functions on $G$ that are $I$-biinvariant. By a classical result of Borel, the functor $V\mapsto V^I$ induces an equivalence of categories between $\C C_I(\CG)$ and the category of finite dimensional $\CH(\CG,I)$-modules.

The Iwahori-Hecke algebra $\CH(\CG,I)$ is a specialization of $\C H(\mathbf q)$ with $\mathbf q=\sqrt q$ and the appropriate specialization of parameters $L$ , see \cite{Ti}. Under this specialization, the natural star operation $$f^\star(g)=\overline {f(g^{-1})}, \quad f\in \CH(\CG,I),$$
on $\C H(\CG,I)$ corresponds to the operation $\star$ on $\C H(\mathbf q)$.

\subsection{Bernstein's projective modules}\label{sec:Bernstein}
In the rest of this section, we explain how the $\bullet$-form for affine Hecke
algebras appears naturally when the Iwahori-Hecke algebras are viewed
as endomorphism algebras of the Bernstein projective modules
\cite{Ber}, see also \cite{He}.

Let $V$ be a complex vector space, 
$$
V^h:=\bigg\{\lam:V\longrightarrow \bC\
:\ \lam (\al_1v_1 + \al_2 v_2)=\ol\al_1\lam(v_1)+\ol\al_2\lam(v_2)\bigg\}.
$$ 
A sesquilinear form is a bilinear form $\langle\cdot,\cdot\rangle$ which
is linear in the first variable, conjugate linear in the second
variable. This is the same as a complex linear map
$\lam:V\longrightarrow V^h.$ The relation is
\[
\langle v,w\rangle_\lam=\lam(v)(w).
\]
Such a form is called nondegenerate if $\lam$ is injective. 
To any sesquilinear form $\lam$ there is associated $\lam^h:V\subset
(V^h)^h\longrightarrow V^h,$ $\lam^h(v)(w):=\ol{\lam(w)(v)}.$ The form is
  called symmetric, if $\lam=\lam^h.$ A symmetric form is an inner
  product if $\lam(v)(v)\ge 0,$ with equality if and only if $v=0.$

\

{Let $G$ be a reductive $p$-adic group. If $(\pi,V)$ is a representation of $G,$ then $(\pi^h,V^h)$ is the representation defined as 
$$
\big(\pi^h(g)\la\big)(v):=\la(\pi(g^{-1})v).
$$
}

\subsection{The projective $\C P$} Let $M$ be a Levi subgroup of $G$. Denote by $M_0$ the intersection of
the kernels of all the unramified characters of $M$. Let $\wti\sigma$ be a relative supercuspidal representation of $M$, $\sig_0$ a supercuspidal constituent of $\wti\sig\mid_{M_0}$.

Define
\[
\begin{aligned}
&\big(\sig,\ V_{\sig}=\Ind_{M_0}^M\sig_0\big)_c &&\text{ induction with compact support, }\\
&\big(\Pi,\ \scP=\Ind_P^G V_\sig\big) &&\text{ normalized induction. }
\end{aligned}
\]
A typical element of $\sig$ is $\delta_{mM_0,v}$ with $m\in M/M_0$ and $v\in V_{\sig_0}.$ This is the \textit{delta}-function supported on the coset $mM_0$ taking constant value $v.$

A typical element of $\scP$ is given by $\delta_{UxP,\delta_{mM_0,v}}$
where $U\in G/P$ is a neighborhood of the identity, the function
satisfies the appropriate transformation law under $P$ on the right,
and the value at $x$ is $\delta_{mM_0,v}.$

\smallskip

If $\psi\in\Hom_G[\scP,\scP],$ then $\psi^h\in\Hom_G[\scP^h,\scP^h].$
But $\scP$ admits a $G-$invariant positive definite hermitian form, so
while $\scP\ne \scP^h,$ nevertheless there is an inclusion $\iota:\scP\longrightarrow \scP^h.$ 
More precisely, if $\scP=\Ind_P^G\sig,$ then the hermitian dual $\scP^h$ is  naturally
isomorphic to $\Ind_P^G\sig^h.$ If $\lam:G\longrightarrow V_\sig^h$ is
such that $\lam(xp)=\sig^h(p^{-1})\lam (g),$ and $f:G\longrightarrow V$
is such that $f(gp)=\sig(p^{-1})f(x),$ then the pairing is
\[
\langle\la, f\rangle:=\int_{G/P} \lam(x)(f(x))dx.
\]
When $\sig$ is unitary (or just has a nondegenerate form so that
 $\sig\subset\sig^h$), we get $\scP\subset\scP^h$ via 
\[
g\in\scP\mapsto \la_g\in \scP^h,\quad 
\lam_g(f)=\int_{G/P}\langle f(x),g(x)\rangle dx \text{ for } f\in\scP.
\]

\subsection{Inner Product} We recall two classical results.

\begin{theorem}[Frobenius reciprocity, {\cite[Theorem 3.2.4]{Cas}}]
\[
\Hom_G[V,\scP]\cong\Hom_{M}[V_N,\sig\delta_P^{-1}].  
\]
\end{theorem}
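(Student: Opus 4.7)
The plan is to establish the isomorphism by constructing mutually inverse natural maps, following the standard Casselman strategy. In one direction, given $\phi \in \Hom_G[V,\scP]$, I would set $\alpha(\phi) := \ev_1 \circ \phi : V \to V_\sigma$, where $\ev_1$ denotes evaluation at $g=1$. The transformation law for $\scP = \Ind_P^G V_\sigma$ under right translation by $P$ forces $\alpha(\phi)$ to intertwine the restricted $P$-action on $V$ with the $P$-action on $V_\sigma$ obtained by inflating the $M$-action along $P \twoheadrightarrow M$, twisted by $\delta_P^{-1/2}$ to account for normalized induction. Since the unipotent radical $N$ then acts trivially on the target, $\alpha(\phi)$ annihilates the subspace $V(N) := \mathrm{span}\{\pi(n)v - v : n \in N,\ v \in V\}$, and so descends to an $M$-equivariant map $\ol{\alpha(\phi)} : V_N \to \sigma\delta_P^{-1}$.

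In the other direction, given an $M$-equivariant $\psi : V_N \to \sigma\delta_P^{-1}$, I would pull back along the quotient $V \twoheadrightarrow V_N$ to obtain a $P$-equivariant $\tilde\psi : V \to V_\sigma$, and then define $\beta(\psi)(v)(g) := \tilde\psi(\pi(g)v)$ for $v \in V$ and $g \in G$. Smoothness of the $G$-action on $V$ makes $\beta(\psi)(v)$ locally constant, and the $P$-equivariance of $\tilde\psi$ produces the correct transformation law under right translation by $P$; hence $\beta(\psi)(v) \in \scP$, and the map $\beta(\psi) : V \to \scP$ is manifestly $G$-equivariant. A direct computation then shows $\alpha$ and $\beta$ are mutually inverse: $\alpha\beta(\psi)(v) = \beta(\psi)(v)(1) = \tilde\psi(v)$, and $\beta\alpha(\phi)(v)(g) = \phi(v)(g)$ follows from the $G$-equivariance of $\phi$.

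The main point to verify, and really the only non-formal step, is that the $\delta_P^{-1}$ factor on the right-hand side is produced correctly by the combination of normalized parabolic induction (whose sections carry a $\delta_P^{1/2}$ twist in their transformation law) and the passage $V \twoheadrightarrow V_N$. Beyond this normalization bookkeeping, both directions are routine, reflecting the well-known fact that this first form of Frobenius reciprocity is essentially a formal adjunction for smooth representations — in contrast to Casselman's \emph{second} adjointness theorem, which is genuinely deep but is not needed here.
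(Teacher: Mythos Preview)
Your outline is the standard Casselman argument and is correct: evaluation at the identity gives the forward map, and Frobenius extension $v\mapsto (g\mapsto \tilde\psi(\pi(g)v))$ gives the inverse, with the $N$-triviality forcing the factorization through $V_N$. There is nothing to compare against, however, because the paper does not supply its own proof of this statement; it is simply quoted as a classical result from \cite[Theorem 3.2.4]{Cas} and used as input for what follows. So your proposal is not an alternative to the paper's argument but rather a recollection of the cited one.

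One cautionary remark on the bookkeeping you flag at the end: you write that normalized induction contributes a $\delta_P^{1/2}$ twist, yet the target in the statement is $\sig\delta_P^{-1}$, not $\sig\delta_P^{\pm 1/2}$. Whether the exponent comes out as $-1$ or $\pm 1/2$ depends on the conventions in force for (i) the transformation law defining $\Ind_P^G$, (ii) whether $V_N$ denotes the raw coinvariants or the normalized Jacquet module, and (iii) whether $\delta_P$ is the modulus character itself or its square root. Before declaring the verification complete you should pin down which conventions the paper and \cite{Cas} are using and check that they really produce the exponent $-1$; as you say, this is the only non-formal step, but it is also the only place one can actually make a mistake.
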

\begin{theorem}[Second adjointness, {\cite[Theorem 20]{Ber}}]
\[
\Hom_G[\scP,V]\cong\Hom_M[\delta_{\ol P}^{-1}\sig,V_{\ol N}]\cong\Hom_{M_0}[\sig_0,\delta_P^{-1}V_{\ol N}].
\]  
\end{theorem}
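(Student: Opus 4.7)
The plan is to establish the first isomorphism, which is Bernstein's second adjointness; the second isomorphism then reduces to Frobenius reciprocity for the compact induction $\sig = \cInd_{M_0}^M \sig_0$, using that $M/M_0$ is discrete and that the modular twist $\delta_P^{-1}$ in the normalized parabolic induction must be tracked through the identifications.

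The first step is to write down a natural candidate map $\Phi: \Hom_G[\scP, V] \longrightarrow \Hom_M[\delta_{\ol P}^{-1}\sig, V_{\ol N}]$. Given $\varphi \in \Hom_G[\scP,V]$, I would fix a small compact open subgroup $U \subset \ol N$ and, for each $v \in V_\sig$, apply $\varphi$ to the element $\delta_{UP,v} \in \scP$ supported on the open Bruhat cell $\ol N P$. Projecting the resulting vector to $V_{\ol N}$ and normalizing by the measure of $U$ yields a map that is independent of $U$ in the limit. The $M$-equivariance, together with the modular twist $\delta_{\ol P}^{-1}$, falls out of the change-of-variable formula as $m\in M$ conjugates $U\subset\ol N$ to $mUm^{-1}$. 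Injectivity of $\Phi$ follows from the density of $\ol N\cdot P$ in $G$: smoothness of $\scP$ and of $\varphi$ forces the values on functions supported in the open cell to determine $\varphi$ uniquely.

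The main obstacle is the surjectivity of $\Phi$. Given $\psi: \delta_{\ol P}^{-1}\sig \to V_{\ol N}$, one must produce a $G$-equivariant lift $\wti\psi: \scP \to V$, not merely a lift of vector-space maps. The standard route, due to Bernstein, is to use Casselman's canonical pairing between the Jacquet modules $V_N$ and $(V^h)_{\ol N}$, which allows one to extract from $\psi$ the asymptotic behavior of matrix coefficients of $V$ against vectors in $\scP$ and then, by integrating against a function in $\scP$, to produce a $G$-intertwining map into $V$. The essential ingredients are the geometric structure of $\ol N \cdot P \subset G$ as an open dense cell together with control of $\scP$ as a projective generator of its Bernstein block; these make $\Hom_G[\scP,\cdot]$ an exact functor and permit the comparison with the Jacquet functor $(\cdot)_{\ol N}$ to be carried out blockwise. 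Alternatively, one can argue directly at the level of Hecke algebras by identifying both sides with $\Hom$'s over $\End_G(\scP)$ and using the Bernstein presentation to match them --- an approach that is closer in spirit to the use to which the theorem is put in later sections.
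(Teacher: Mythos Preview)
The paper does not prove this theorem; it merely states it as a citation of Bernstein's result \cite[Theorem 20]{Ber} and then uses it in the subsequent computations. There is therefore nothing in the paper's own argument to compare your proposal against.

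That said, your outline is a reasonable sketch of the standard approach to second adjointness. A couple of remarks: your injectivity argument (``values on the open cell determine $\varphi$'') is fine for the map $\Phi$ once you have identified the correct target, but the identification of the image in $V_{\ol N}$ (rather than in $V$ itself) requires the stabilization/asymptotic argument that you allude to only later, so the order of presentation obscures where the real work lies. For surjectivity, invoking Casselman's canonical pairing is the right idea, but note that this pairing is between $V_N$ and $(\wti V)_{\ol N}$ for the smooth contragredient $\wti V$, not $V^h$; the passage to the hermitian dual you mention is a separate step. Your alternative route via $\End_G(\scP)$ is closer to how the paper actually \emph{uses} the theorem, but it is circular as a proof of second adjointness itself, since the identification of $\End_G(\scP)$ in the paper relies on this very theorem.
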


\bigskip
Let $\ol\scP$ be the module induced from $\sig$ from the opposite
parabolic $\ol P:=M\ol N.$ The (second) adjointness theorem gives
\[
\begin{aligned}
&\Hom_G[\scP,\scP]=\Hom_M[\delta_P^{-1}\sig,\scP_{\ol N}]=
\Hom_{M_0}[\sig_0,\delta_P^{-1}\scP_{\ol N}],\\
&\Hom_G[\ol\scP,\scP]=\Hom_M[\delta_{\ol P}^{-1}\sig,\scP_N]=
\Hom_{M_0}[\sig_0,\delta_P\scP_N].
\end{aligned}
\]
Assume $P$ and $\ol P$ are conjugate, and let $w_0\in W$ be the shortest Weyl group element taking $P$ to $\ol P$, stabilizing $M$ and taking $N$ to $\ol N$. Assume also that there is an isomorphism $\tau_0:(\sig_0,V_{\sig_0})\longrightarrow (w_0\circ\sig_0,V_{\sig_0})$. Extend it to $\tau:V_\sig\longrightarrow V_\sig$ by  
$\tau(\delta_{mM_0,v})=\delta_{w_0(m)M_0,\tau(v)}.$ Write $\wti\tau$ for the isomorphism
\begin{align*}
&\wti\tau:\ol\scP\longrightarrow\scP,\\
&\wti\tau(f)(x)=\tau(f(xw_0)).  
\end{align*}
Thus given $\Phi,\Psi\in\Hom_G[\scP,\scP],$ then $\ol\Phi:=\Phi\circ\wti\tau\in \Hom_G[\ol\scP,\scP]$, and they give rise to 
\[
\begin{aligned}
&\ol\phi\in\Hom_{M_0}[\sig_0,\scP_{\ol N}]\\
&\psi\in\Hom_{M_0}[\sig_0,\scP_N].
\end{aligned}
\]
According to Casselmann \cite[Proposition 4.2.3]{Cas}, there is a nondegenerate pairing $\langle\ ,\ \rangle_{N,\ol N}$ between $\scP_N$ and $\scP_{\ol N}$.  Given $v_1,v_2\in V_{\sig_0}$, we can form
\[
\langle
v_1,v_2\rangle_{\Phi,\Psi}:=\langle\ol\phi(v_1),\psi(v_2)\rangle_{N,\ol
  N}.
\]
This pairing is invariant and sesquilinear, so there is a constant $m_{\Phi,\Psi}$ such that
\begin{equation}\label{m-Phi-Psi}
\langle v_1,v_2\rangle_{\Phi,\Psi}=m_{\Phi,\Psi}\langle v_1,v_2\rangle_{\sig_0}.
\end{equation}
We define a sesquilinear pairing
\begin{equation}\label{form}
\langle\Phi,\Psi\rangle:=m_{\Phi,\Psi}.
\end{equation}
\subsection{} We make the form (\ref{form}) precise. Let $K_\ell$ be an open compact subgroup with an Iwasawa decomposition compatible with $P,$ \ie $K_\ell=K_\ell^-\cdot K_\ell^0\cdot K_\ell^+,$ invariant by $w_0.$ 

Let $x_0,y_0\in V_{\sig_0}^{K_\ell^0},$ and $x:=\delta_{M_0,x_0}, y:=\delta_{M_0,y_0}.$  Then $\delta_{K_\ell^+\ol P,x}\in\ol\scP$ and $\delta_{K_\ell^-P,y}\in\scP.$ The isomorphism $\wti\tau$ takes $\delta_{K_\ell^+\ol P,x}$ to $\delta_{K_\ell^+w_0P,\tau(x)}.$ So
\[
(x_0,y_0)_{\Phi,\Psi}:=\langle\ol\Phi_{\ol N}\big(\delta_{K_\ell^+w_0P,\tau(x)}\big),\Psi_N\big(\delta_{K_\ell^-P,y}\big)\rangle_{\ol N,N}=m_{\Phi,\Psi}\langle x_0,y_0\rangle_{\sig_0}.
\]
Here $\ol\Phi_{\ol N}$ and $\Psi_N$ are the projection maps onto $\scP_{\ol N}$ and $\scP_N$ respectively.

Let $\La\in A:=Z(M)$ be such that it is regular on $N$ and \textit{contracts it}. Let $a(\La)$ and $a(-\La)$ be the $K_\ell$ double cosets of $\La$ and its inverse.  

By Casselman \cite[section 4]{Cas} and Bernstein \cite[chapter III.3]{Ber},

\[
\begin{aligned}
&\scP^{a(-\La),K_\ell}\cong \scP_{\ol N}^{K_\ell^0},\\
&\scP^{a(\La),K_\ell}\cong \scP_{N}^{K_\ell^0},
\end{aligned}
\]  
because $a(\La)$ contracts $K_\ell^+.$ We conclude that
$$
\begin{aligned}
&\delta_{K_\ell P,x}\in\scP^{a(-\La),K_\ell},\quad &&\text{ so }\quad
\Phi(\delta_{K_\ell P,x})\in\scP^{a(-\La),K_\ell}\cong\scP_{\ol N}^{K_\ell^0},\\
 &\delta_{K_\ell w_0P,\tau_0y}\in\scP^{a(\La),K_\ell},\quad &&\text{  so }\quad 
\Psi(\delta_{K_\ell w_0 P,\tau_0 y})\in\scP^{a(\La),K_\ell}\cong \scP_N^{K_\ell^0}.
\end{aligned}
$$

\begin{proposition}
With the notation as in (\ref{m-Phi-Psi}), $
m_{\Phi,\Psi}=\ol{m_{\Psi,\Phi}}.
$  {In other words, the sesquilinear form (\ref{form}) is hermitian}.
\end{proposition}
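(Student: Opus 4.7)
The strategy is to unpack the defining relation
\[
m_{\Phi,\Psi}\,\langle x_0,y_0\rangle_{\sigma_0} \;=\; \langle \ol\Phi_{\ol N}\big(\delta_{K_\ell^+ w_0 P,\tau(x)}\big),\, \Psi_N\big(\delta_{K_\ell^- P,y}\big)\rangle_{\ol N,N},
\]
and exploit the fact that Casselman's pairing $\langle\cdot,\cdot\rangle_{N,\ol N}$ is not merely a nondegenerate bilinear pairing but is inherited from the canonical $G$-invariant positive-definite Hermitian form on $\scP$ (which exists since $\sigma_0$ is unitary, as discussed in Section 2.7). Viewed through the inclusion $\scP\subset\scP^h$, this pairing acquires a conjugate-symmetry
\[
\overline{\langle f,g\rangle_{N,\ol N}} \;=\; \langle g,f\rangle_{\ol N,N},\qquad f\in\scP_N,\ g\in\scP_{\ol N}.
\]
I would first establish this symmetry by checking that the Jacquet-module pairing of Casselman is compatible, via the isomorphism $\scP^h\cong \Ind_P^G \sigma^h \cong \scP$, with the conjugation $\lambda\mapsto\lambda^h$ on sesquilinear forms recalled in Section 2.7.

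Next, I apply this Hermitian symmetry to the definition of $m_{\Psi,\Phi}$. Taking complex conjugates of both sides of
\[
m_{\Psi,\Phi}\langle y_0,x_0\rangle_{\sigma_0}=\langle \ol\Psi_{\ol N}\big(\delta_{K_\ell^+ w_0 P,\tau(y)}\big),\, \Phi_N\big(\delta_{K_\ell^- P,x}\big)\rangle_{\ol N,N},
\]
and using $\overline{\langle y_0,x_0\rangle_{\sigma_0}}=\langle x_0,y_0\rangle_{\sigma_0}$ together with the symmetry above, yields
\[
\overline{m_{\Psi,\Phi}}\,\langle x_0,y_0\rangle_{\sigma_0}=\langle \Phi_N\big(\delta_{K_\ell^- P,x}\big),\, \ol\Psi_{\ol N}\big(\delta_{K_\ell^+ w_0 P,\tau(y)}\big)\rangle_{N,\ol N}.
\]
Thus it remains to identify the right-hand side with $m_{\Phi,\Psi}\langle x_0,y_0\rangle_{\sigma_0}$.

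The third and decisive step is to verify this identification. The element $w_0\in W$ is an involution that swaps $P$ and $\ol P$ and interchanges $N$ and $\ol N$, so conjugation by $w_0$ interchanges the Jacquet functors with respect to $N$ and $\ol N$. Because $\sigma_0$ is unitary and $w_0\circ\sigma_0\cong \sigma_0$, the isomorphism $\tau_0$ may be normalized to be a unitary intertwiner, whence $\widetilde\tau:\ol\scP\to\scP$ and its inverse are unitary $G$-intertwiners. Using this, one rewrites the pairing $\langle \Phi_N(\delta_{K_\ell^- P,x}),\,\ol\Psi_{\ol N}(\delta_{K_\ell^+ w_0 P,\tau(y)})\rangle_{N,\ol N}$ by transporting the first argument through $\widetilde\tau^{-1}$ (producing a factor of $\widetilde\tau$ absorbed into $\Phi$, i.e.\ producing $\ol\Phi_{\ol N}$ applied to the $w_0$-translate) and simultaneously moving the second argument back through $\widetilde\tau$ (removing the $w_0$ in the support and the $\tau$ in the coefficient). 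The contraction properties of $a(\pm\Lambda)$ recalled in the excerpt ensure that the delta functions match up with those appearing on the opposite side.

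The main obstacle is precisely this bookkeeping in the third step: one must check, case by case on the $K_\ell$-double cosets $a(\pm\Lambda)$, that the unitary intertwiner $\widetilde\tau$ commutes with the projections onto the $N$- and $\ol N$-Jacquet modules in the way required, and that the normalization of $\tau_0$ is indeed unitary with respect to the form $\langle\cdot,\cdot\rangle_{\sigma_0}$ chosen at the outset. Granting these compatibilities, the equality $m_{\Phi,\Psi}=\overline{m_{\Psi,\Phi}}$ follows by comparing the two displayed expressions.
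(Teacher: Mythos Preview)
Your strategy is conceptually reasonable but leaves the crucial swap in step three genuinely open, and the paper's proof avoids that difficulty by a much simpler device. Rather than invoking unitarity of $\widetilde\tau$ and compatibility with Jacquet projections, the paper chooses a single vector $x_0$ with $\tau_0 x_0=x_0$ (possible after replacing $\tau_0$ by $-\tau_0$ if necessary), sets $y_0=x_0$, and uses the self-adjoint Hecke element $f_{w_0}=\delta_{K_\ell w_0 K_\ell}$. This element satisfies $f_{w_0}^*=f_{w_0}$ and $\Pi(f_{w_0})$ literally interchanges $\delta_{K_\ell P,x}$ and $\delta_{K_\ell w_0 P,x}$. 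Since $\Phi,\Psi$ are $G$-intertwiners they commute with $\Pi(f_{w_0})$, so one can slide $\Pi(f_{w_0})$ across the hermitian form on $\scP$ to pass from $\langle\Phi(\delta_{K_\ell P,x}),\Psi(\delta_{K_\ell w_0 P,x})\rangle$ to $\langle\Phi(\delta_{K_\ell w_0 P,x}),\Psi(\delta_{K_\ell P,x})\rangle$, and then conjugate-symmetry of the form on $\scP$ gives $\overline{m_{\Psi,\Phi}}$ directly.

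Your third step, by contrast, asks $\widetilde\tau$ to do the work of swapping which argument carries the $w_0$-translate. But $\widetilde\tau$ is a map $\ol\scP\to\scP$, not an involution of $\scP$, and the identity you need is really a statement about how the Jacquet pairing behaves under the \emph{action} of $w_0$ inside $\scP$, not under the intertwining map between $\scP$ and $\ol\scP$. The bookkeeping you flag as the ``main obstacle'' is not just bookkeeping: without an explicit operator on $\scP$ implementing the swap (which is exactly what $\Pi(f_{w_0})$ provides), it is not clear your transport argument closes. The paper's trick of specializing to a $\tau_0$-fixed vector and using a self-adjoint Hecke operator sidesteps all of this in a few lines.
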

\begin{proof}
Assume $\tau_0\ne -Id,$ or else use $-\tau_0.$ Thus there is $x_0$
such that $\tau_0x_0=x_0.$ Let $f_{w_0}:=\delta_{K_lw_0K_l}.$ Then
$f^*_{w_0}=f_{w_0},$ and 
\begin{align*}
&\Pi(f_{w_0})\delta_{K_lw_0 P,x}=\delta_{K_\ell P,x},\\
&\Pi(f_{w_0})\delta_{K_\ell P,x}=\delta_{K_\ell w_0 P,x}.
\end{align*}
Then
\begin{align*}
m_{\Phi,\Psi}<x_0,x_0>&=
<\Phi(\delta_{K_\ell P,x}),\Psi(\delta_{K_\ell w_0P,x})>=\\
&=<\Phi(\Pi(f_{w_0})\delta_{K_\ell w_0 P,x}),\Psi(\delta_{K_\ell w_0P,x})>=\\
&= <\Phi(\delta_{K_\ell w_0P,x}),\Psi(\Pi(f_{w_0})\delta_{K_\ell w_0P,x})>=\\
&=<\Phi(\delta_{K_\ell w_0P,x}),\Psi(\delta_{K_\ell P,x})>=\\
&=\ol{<\Psi(\delta_{K_\ell P,x}),\Phi(\delta_{K_\ell w_0P,x})>}= \\
&=\ol{m_{\Psi,\Phi}}<x_0,x_0>. 
\end{align*}
\end{proof}
\subsection{}
For $a\in A,$ let $\Theta_a\in \Hom_G[\scP,\scP]$ be given by
{
\begin{equation}
    \label{deftheta}
    \Theta_a(\delta_{K_\ell gP,x})=\delta_{K_\ell gP,\theta_a(x)},\qquad 
\theta_a(x):=\theta_a(\delta_{mM_0,x_0})=\delta_{maM_0,x_0}.
\end{equation}
}
\begin{proposition}  \label{p:starfortheta}
\begin{equation*}
  <\Phi,\Psi\circ\Theta_a>=<\Phi\circ\Theta_a,\Psi>.
\end{equation*}
\end{proposition}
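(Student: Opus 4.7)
The plan is to follow the template of the proof of the preceding proposition and reduce the identity to an adjointness statement for the operators induced by $\Theta_a$ on the Jacquet modules. From that proof we have
\[
m_{\Phi,\Psi}\langle x_0,x_0\rangle_{\sigma_0} = \langle \Phi(\delta_{K_\ell P,x}),\Psi(\delta_{K_\ell w_0 P,x})\rangle,
\]
where $x_0\in V_{\sigma_0}^{K_\ell^0}$ satisfies $\tau_0 x_0=x_0$, $x=\delta_{M_0,x_0}$, and the pairing on the right-hand side is Casselman's pairing between $\scP_{\bar N}$ and $\scP_N$, realized via the isomorphisms $\scP^{a(-\Lambda),K_\ell}\cong\scP_{\bar N}^{K_\ell^0}$ and $\scP^{a(\Lambda),K_\ell}\cong\scP_N^{K_\ell^0}$. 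Combining this with the formula $\Theta_a(\delta_{K_\ell gP,y})=\delta_{K_\ell gP,\theta_a y}$ from (\ref{deftheta}), the proposition reduces to
\[
\langle \Phi(\delta_{K_\ell P,x}),\Psi\Theta_a(\delta_{K_\ell w_0 P,x})\rangle = \langle \Phi\Theta_a(\delta_{K_\ell P,x}),\Psi(\delta_{K_\ell w_0 P,x})\rangle.
\]

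Since $\Theta_a\in\End_G(\scP)$ is $G$-equivariant, it commutes with $\Phi$ and $\Psi$, and descends to $M$-equivariant endomorphisms $(\Theta_a)_N$ and $(\Theta_a)_{\bar N}$ of the Jacquet modules. Setting $u=[\Phi(\delta_{K_\ell P,x})]_{\bar N}$ and $v=[\Psi(\delta_{K_\ell w_0 P,x})]_N$, the required identity becomes
\[
\langle u,(\Theta_a)_N v\rangle_{\bar N,N} = \langle (\Theta_a)_{\bar N} u,v\rangle_{\bar N,N},
\]
i.e.\ the statement that $(\Theta_a)_{\bar N}$ and $(\Theta_a)_N$ are mutually adjoint under Casselman's pairing.

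The main obstacle is the verification of this adjointness. Since $\Theta_a$ acts on the inducing data $V_\sigma$ by the translation $\theta_a$ built from the $A=Z(M)$-action on $\sigma=\Ind_{M_0}^M\sigma_0$, the adjointness amounts to a compatibility between this $A$-action and the sesquilinear structure of Casselman's pairing inherited from the hermitian form on $\sigma_0$. Concretely, one can use the explicit description of the Jacquet projections recalled above together with the $M$-invariance of Casselman's pairing to check that the $a$-twists on the two sides of the pairing cancel. This realizes, for the projective module $\scP$, the identity $\theta_a^\bullet=\theta_a$ from Definition \ref{d:basicstars-affine} characterizing the $\bullet$-operation on the affine Hecke algebra $\End_G(\scP)$.
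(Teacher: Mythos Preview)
Your reduction of the identity to an adjointness statement for the induced maps $(\Theta_a)_N$ and $(\Theta_a)_{\bar N}$ under Casselman's pairing is the same reduction the paper makes, but the verification of that adjointness is exactly where the content lies, and you have not supplied it. The sentence ``one can use the explicit description of the Jacquet projections \dots\ together with the $M$-invariance of Casselman's pairing to check that the $a$-twists on the two sides of the pairing cancel'' is a hope, not an argument. Indeed, naive $M$-invariance of the pairing gives the adjoint of translation by $a$ as translation by $a^{-1}$, not by $a$; so without further input one obtains $\langle\Phi,\Psi\circ\Theta_a\rangle=\langle\Phi\circ\Theta_{a^{-1}},\Psi\rangle$, which is the wrong statement. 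Your final sentence invoking $\theta_a^\bullet=\theta_a$ from Definition~\ref{d:basicstars-affine} is circular: that identity for the abstract Hecke algebra is precisely what the proposition is meant to establish on the level of $\End_G(\scP)$.

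The paper's argument supplies the missing mechanism: on the vector $\delta_{K_\ell P,x}$ the operator $\Theta_a$ coincides with $\Pi(f_a)$ for the Hecke algebra element $f_a=\delta_{K_\ell aK_\ell}$ (with $a\in A^+$ dominant), and then one uses $f_a^\ast=f_{a^{-1}}$ together with the standard adjointness $\langle\Pi(f)u,v\rangle=\langle u,\Pi(f^\ast)v\rangle$. The point is that on the \emph{other} vector $\delta_{K_\ell w_0 P,x}$ the $w_0$-twist converts $\Pi(f_{a^{-1}})$ back into $\Theta_a$, which is exactly what fixes the sign discrepancy noted above. If you want to complete your approach, you need to identify this $w_0$-twist explicitly; the cleanest way is precisely the Hecke-algebra identification the paper uses.
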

  \begin{proof}
There is $f_a\in \scH(K_\ell\backslash G/K_\ell)$
(namely $\delta_{K_\ell aK_\ell}$) such that $\Theta_a(\delta_{K_\ell
  P,x})=\Pi(f_a)(\delta_{K_\ell   P,x}).$ 
{Then  use the fact that $f_a^*=f_{a^{-1}}$ for $a\in A^+$ \textit{dominant}}.  
\end{proof}

\subsection{Digression about the intertwining  operator}
Let $J:\scP\longrightarrow\scP$ be given by  the formula
\begin{equation}
  \label{eq:intertwiningoperator}
  Jf(x):=\int_N\tau_0f(xnw_0)\;dn=\int_{\ol N}\tau_0f(xw_0\ol n)\;d\ol n.
\end{equation}
{This should be considered as a formal expression. When you specialize
  to a value $\nu\in \widehat A,$ the split part of the center of $M,$
  $J$ will have poles. }
%I don't know a good reference, something is in Bernstein's notes.

Recall the inner product on $\scP$,
\[
\langle f_1,f_2\rangle:=\int_{K_0}\langle f_1(k),f_2(k)\rangle\; dk.
\] 
\begin{proposition}\label{p:adjointness}
  $$
<Jf_1,f_2>=<f_1,Jf_2>.
$$
\end{proposition}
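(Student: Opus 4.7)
The plan is to recognize $J$ as an average of unitary right-translations twisted by the pointwise action of $\tau_0$, and to deduce the claim by comparing the formal adjoint $J^*$ to $J$ via the equivalence of the two expressions in \eqref{eq:intertwiningoperator}. Denote by $R(g)f(x):=f(xg)$ the right translation on $\scP$; it commutes with the pointwise action of $\tau_0$ on values in $V_{\sig}$. Since $\scP = \Ind_P^G V_\sig$ is normalized induction from the unitary $V_\sig$, every $R(g)$ is unitary with respect to $\langle\,\cdot\,,\,\cdot\,\rangle$, so $R(g)^* = R(g^{-1})$. We also choose $\tau_0$ to be a unitary self-adjoint involution on $V_{\sig_0}$, which is possible because the intertwining $\sig_0 \cong w_0\circ\sig_0$ can be normalized canonically (up to $\pm 1$) when $\sig_0$ is unitary.

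Writing $J = \int_N R(nw_0)\cdot\tau_0\,dn$, the formal adjoint is $J^* = \int_N R(w_0^{-1}n^{-1})\cdot\tau_0\,dn$. Substituting $n\mapsto n^{-1}$, permissible since $N$ is unimodular, gives $J^* = \int_N R(w_0^{-1}n)\cdot\tau_0\,dn$. The change of variables $\ol n = w_0^{-1}nw_0 \in \ol N$ (using $w_0^{-1}Nw_0 = \ol N$) then produces $J^* = c\int_{\ol N}R(\ol n w_0^{-1})\cdot\tau_0\,d\ol n$ for a constant Jacobian $c$, and rewriting $\ol n w_0^{-1} = w_0^{-1}(w_0\ol n w_0^{-1})$ yields, after absorbing the factor $R(w_0^{-2})$ (which lies in $M\cap K_0$ and acts on $V_{\sig_0}$ compatibly with $\tau_0$ by construction) into $\tau_0$, the identity
\begin{equation*}
J^* = \int_{\ol N}\tau_0\cdot R(w_0\ol n)\,d\ol n.
\end{equation*}
By the second expression in \eqref{eq:intertwiningoperator}, this is exactly $J$, so $\langle Jf_1,f_2\rangle = \langle f_1,Jf_2\rangle$ as claimed.

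The main obstacle is that the integrals defining $J$ are only formal, as the authors explicitly caution: they diverge on $\scP$ and make sense only after specializing to an unramified character $\nu\in\wht A$ on the split center of $M$, where $J=J(\nu)$ is a meromorphic function of $\nu$. The argument above is rigorous at values of $\nu$ where both the $N$- and $\ol N$-integrals converge absolutely, and the identity is then extended to the entire regular locus of $J$ by meromorphic continuation. A secondary technical task is the precise bookkeeping of Haar-measure Jacobians and the $w_0^2$-factor in $M\cap K_0$; however, these are fixed by demanding agreement of the two formulas in \eqref{eq:intertwiningoperator}, so the normalizations used above are internally consistent.
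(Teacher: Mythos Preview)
Your approach has a genuine gap. The claim that ``every $R(g)$ is unitary with respect to $\langle\cdot,\cdot\rangle$, so $R(g)^*=R(g^{-1})$'' is not valid on $\scP$. For $g=nw_0$ the right translate $x\mapsto f(xnw_0)$ does \emph{not} lie in $\scP=\Ind_P^G V_\sig$: it satisfies the transformation law for $g^{-1}Pg$ rather than for $P$. So the individual operators $R(nw_0)$ in your decomposition $J=\int_N \tau_0 R(nw_0)\,dn$ are not endomorphisms of $\scP$, and the pairing $\langle\cdot,\cdot\rangle=\int_{K_0}\langle\cdot,\cdot\rangle\,dk$ is not invariant under them (indeed $knw_0\notin K_0$). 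Normalized induction makes the \emph{left} $G$-action $\Pi$ unitary, not arbitrary right translations. Hence the step $J^*=\int_N R(w_0^{-1}n^{-1})\tau_0\,dn$ is unjustified, and the subsequent change-of-variables argument, while formally suggestive, does not establish the identity.

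The paper's proof does the honest computation that your shortcut tries to bypass: starting from $\langle f_1,Jf_2\rangle=\int_{K_0}\langle f_1(k),\int_{\ol N}\tau_0 f_2(kw_0\ol n)\,d\ol n\rangle\,dk$, it first shifts $w_0$ to the other factor using $K_0 w_0=K_0$ and moves $\tau_0$ across, then writes $\ol n=\kappa(\ol n)\cdot n(\ol n)\cdot m(\ol n)$ via the Iwasawa decomposition (so $\kappa(\ol n)\in K_0$, $n(\ol n)\in N$, $m(\ol n)\in M$). Absorbing $\kappa(\ol n)$ into the $K_0$-integral and tracking the $m(\ol n)$-factors through $\tau_0$ and $\sig$ produces $\int_{K_0}\langle\int_{\ol N}\tau_0 f_1(kn(\ol n))\,d\ol n,f_2(k)\rangle\,dk$; the final identification with $\langle Jf_1,f_2\rangle$ uses that $\ol n\mapsto w_0 n(\ol n)w_0^{-1}$ has trivial Jacobian. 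This Iwasawa step is exactly the non-trivial content your argument omits.
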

\begin{proof}
  \begin{equation}
    \label{eq:6.2.1}
    <f_1,Jf_2>=\int_{K_0}<f_1(k),\int_{\ol N}\tau_0f_2(kw_0\ol n)\;
    d\ol{n}>\; dk.
  \end{equation}
We can move $w_0$ and $\tau_0$ to the other side:
\begin{equation}
  \label{eq:6.2.2}
<f_1,Jf_2>=\int_{K_0}<\tau_0f_1(kw_0),\int_{\ol N}f_2(k\ol n)\; d\ol
n>\; dk.  
\end{equation}
Write $\ol{n}=\kappa(\ol n)\cdot n(\ol n)\cdot m(\ol n).$ So
\begin{equation}
\label{eq:6.2.3}
\begin{aligned}
&<f_1,Jf_2>= 
\int_{K_0}<\tau_0f_1(kw_0),
\int_{\ol N}\sig(m(\ol n)^{-1})f_2(k\kappa(\ol n))\; d\ol n>\; dk=
\\
&=\int_{K_0}<\int_{\ol N}\sig(m(\ol n))\tau_0f_1(k\kappa(\ol
n)^{-1}w_0)\; d\ol n,f_2(k)>\; dk. 
\end{aligned}  
\end{equation}
Since $\kappa(\ol n)=\ol n\cdot m(\ol n)^{-1}\cdot n(\ol n)^{-1},$ we conclude
$\kappa(\ol n)^{-1}=n(\ol n)\cdot m(\ol n)\cdot \ol n^{-1}.$ So
\begin{equation}
    \label{eq:6.2.4}
\begin{aligned}
<f_1,Jf_2>&=\\
=\int_{K_0}&<\int_{\ol N}\sig(m(\ol n))\tau_0f_1(k n(\ol n) m(\ol
n)\ol n^{-1}w_0)\; d\ol n,f_2(k)>\;dk= \\
=\int_{K_0}&<\int_{\ol N}
\tau_0f_1(kn(\ol n))\; d\ol n,f_2(k)>\;dk
\end{aligned}
\end{equation}
because $\sig(m(\ol n))$ is conjugated by $w_0$ but then flipped
back by $\tau_0,$ and then cancels $\sig(m(\ol n)).$ 
Finally
\[
Jf_1=\int_{\ol N}\tau_0f_1(kn(\ol n)w_0)\; d\ol n
\]
follows from the fact that $\ol
n\mapsto w_0n(\ol n)w_0^{-1}$ is an isomorphism with trivial Jacobian.
\end{proof}
\subsection{} Assume from now on that $G$ is a split $p$-adic
group. Let $P=B=AN$ be a Borel subgroup.
Let $K_0$ be the hyperspecial maximal compact subgroup, and
$K_1\subset \Ca I\subset K_0$ be an Iwahori subgroup.
It has an Iwasawa decomposition $\Ca I=\Ca I^-\cdot A_0\cdot\Ca I^+.$ Furthermore, 
$G=KB=\cup \Ca IwB$ disjoint union where $w\in W.$

We consider the case of the trivial representation of $A_0:=K_0\cap
A$, $\sig_0=\triv$, i.e, this is the case of representations with $\Ca
I-$fixed vectors. Let $\CH=\CH(\C I\backslash G/\C I)$ be the
Iwahori-Hecke algebra of compactly supported smooth $\C I$-biinvariant
functions with convolution with respect to a Haar measure.

\begin{proposition}
In the Iwahori-spherical case, the algebra $\Hom[\scP,\scP]$ is naturally isomorphic to the opposite
algebra to $\Ca H(\C I\backslash G/\C I).$
\end{proposition}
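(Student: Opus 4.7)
The plan is to apply the equivalence of categories $V \mapsto V^{\C I}$ between $\C C_I(\CG)$ and the category of (possibly infinite-dimensional) left $\Ca H$-modules, due to Bernstein, Borel, and Casselman. Under this equivalence, $\Hom_G(\scP, \scP) \cong \End_{\Ca H}(\scP^{\C I})$, so the problem reduces to showing $\scP^{\C I} \cong \Ca H$ as a left $\Ca H$-module; one then has $\End_{\Ca H}(\Ca H) \cong \Ca H^{\mathrm{op}}$, because the endomorphisms of a rank-one free left module over itself are given by right multiplication, which is an anti-homomorphism. This accounts for the opposite algebra in the statement.

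First I would compute $\scP^{\C I}$ as a vector space. Using the Bruhat decomposition $G = \bigsqcup_{w \in W} B w \C I$, a right-$\C I$-invariant function $f \in \scP = \Ind_B^G V_\sig$ is determined by its values $f(w) \in V_\sig$, subject only to invariance under $B \cap w \C I w^{-1}$. Because $A \cap \C I = A_0$ acts trivially on $V_\sig = \cInd_{A_0}^A \triv$ and the unipotent part of $B \cap w \C I w^{-1}$ acts trivially as well, no nontrivial constraint is imposed, and $\scP^{\C I} \cong \bigoplus_{w \in W} V_\sig$, with a natural basis indexed by $W \times (A/A_0) = W \ltimes X = W^e$, matching the Iwahori basis $\{T_{\tilde w}\}_{\tilde w \in W^e}$ of $\Ca H$. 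The cleanest route to upgrading this vector-space identification to an isomorphism of left $\Ca H$-modules is to construct a $G$-equivariant map $\cInd_{\C I}^G \triv \to \scP$ by sending the canonical generator to the distinguished vector $v_0 \in \scP^{\C I}$ that corresponds to $\tilde w = e$, and then verifying it is an isomorphism by comparing $\C I$-fixed subspaces.

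Equivalently, and more in the spirit of the earlier subsections, one can work directly inside $\End_G(\scP)$: identify the operators $\Theta_a$ of \eqref{deftheta} with the Bernstein generators $\theta_x$ (for $x$ the image of $a$ in $A/A_0 \cong X$), and the intertwining operator $J$ of \eqref{eq:intertwiningoperator} together with the convolution operators by characteristic functions of $\C I w \C I$ (as already used in the proof of Proposition \ref{p:starfortheta}) with the finite Weyl group generators $T_w$. The principal obstacle is then the verification of the Bernstein cross relation $\theta_x T_{s_\al} - T_{s_\al} \theta_{s_\al(x)} = (\theta_x - \theta_{s_\al(x)})(\C G(\al) - 1)$ for these concrete operators in $\End_G(\scP)$; this is a nontrivial computation involving the action of $J$ on the basis of $\scP^{\C I}$, with the rational factor $\C G(\al)$ arising from the normalization/poles of $J$ along walls. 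Once these relations are checked and the map is shown to be bijective (using the basis count from the previous paragraph), the proposition follows.
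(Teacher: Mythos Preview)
Your first approach is correct and is essentially the paper's argument, just packaged differently. The paper does not invoke the Borel--Bernstein equivalence as a black box; instead it uses the second adjointness isomorphism $\Hom_G[\scP,\scP]\cong\Hom_{A_0}[\sigma_0,\scP_{\ol N}]\cong\scP^{\Ca I}$ (already set up in the preceding subsections), observes that the vector $\phi_1=\deltaunit$ generates $\scP$, and checks directly that the resulting bijection $\Ca H\to\scP^{\Ca I}\to\Hom_G[\scP,\scP]$, $h\mapsto\Pi(h)\phi_1\mapsto\Phi_h$, reverses multiplication. Your route via $\End_{\Ca H}(\scP^{\Ca I})\cong\End_{\Ca H}(\Ca H)\cong\Ca H^{\mathrm{op}}$ amounts to the same computation once you exhibit the cyclic generator $\phi_1$, and your Bruhat--basis count of $\scP^{\Ca I}$ is a reasonable substitute for the paper's appeal to $\scP_{\ol N}^{A_0}\cong\scP^{\Ca I}$. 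One small care point: the paper's $\C C_I(\CG)$ is stated for admissible modules, and $\scP$ is not admissible, so you are right to cite the stronger Bernstein version of the equivalence.

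Your second paragraph, identifying $\Theta_a$ and the $T_\al$ inside $\End_G(\scP)$ and verifying the Bernstein--Lusztig relations, is not needed here; that is exactly the content of the subsequent Theorem~\ref{t:End-P} in the paper, which is proved after this proposition and by a separate rank-one reduction. For the present statement the abstract anti-isomorphism suffices.
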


\begin{proof}
Recall 
$$
\Hom_G[\scP,\scP]\cong \Hom_{A_0}[\sig_0,\scP_{\ol N}]\cong\scP^{A_0}_{\ol N}\cong \scP^{\Ca I}.
$$
The element $\phi_1=\deltaunit$ is in $\scP^\Ca I,$ and it generates $\scP.$ 
So any $\Phi\in\Hom_G[\scP,\scP]$  is determined by its value on $\phi_1.$  Furthermore, $\Phi(\phi_1)\in\scP^I$.

\bigskip
Conversely,  $\phi\in\Hom_{A_0}[\sig_0,\scP_{\ol N}]\cong \scP_{\ol
  N}^{A_0}\cong\scP^{\Ca I}$ gives rise to $\Phi\in\Hom_G[\scP,\scP]$  by the
relation
\[
\Phi(\delta_{\Ca I^-B,\delta_{A_0,\one}})=\phi.
\]
The map
\[
h\in\Ca H\mapsto \Pi(h)\big(\deltaunit\big)
\]
is an isomorphism between $\Ca H$ and $\scP^{\Ca I}.$ Let $h_\psi\in
\Ca I$ be the element in $\Ca H$ corresponding to $\psi.$ Then if $\Phi(\deltaunit)=\phi,$ 
\[
\Phi[\psi]=\Phi[\Pi(h_\psi)(\deltaunit)]=\Pi(h_\phi)\Phi[\deltaunit]=\Pi(h_\psi)\phi.
\]
Now let $\phi_1,\phi_2\in \scP_{\ol N}^{A_0}.$ Then
\begin{align*}
&(\Phi_1\circ\Phi_2)(\deltaunit)=
\Phi_1[\Pi(h_{\phi_2})(\deltaunit)]=
\Pi(h_{\phi_2})\Phi_1[\deltaunit]=\\
&=\Pi(h_{\phi_2})\Pi(h_{\phi_1})(\deltaunit)
=\Pi(h_{\phi_1})\cdot\Pi(h_{\phi_2})(\deltaunit).
\end{align*}

\end{proof}

\begin{remark} The
opposite algebra to the Iwahori-Hecke algebra is isomorphic to itself, e.g.,
\[
T\circ_{opp}\theta=\theta^{-1}\circ_{opp} T
+(q-1)\frac{\theta-\theta^{-1}}{1-\theta_{-\al}}
\]
is equivalent to
\[
\theta\cdot T=T\cdot\theta^{-1}+(q-1)\frac{\theta
  -\theta^{-1}}{1-\theta_{-\al}}.
\]
\end{remark}

\subsection{}
 The operators $J_\al$ are defined analogously to $J$ for each simple root, integration is along the root subgroup $N_\al.$ The operators satisfy the formula analogous to \ref{p:adjointness}. By specializing to $\nu\in\widehat A$ unramified, we can prove the following result.
 Define 
\begin{equation}
F(\Theta)=(q-1)\frac{1}{1-\Theta^{-1}},
\end{equation}
 and write $F_\al$ for $F(\Theta_\al).$
\begin{theorem}\label{t:End-P}
  \begin{equation}
    \label{eq:6.3.1}
    T_\al:=J_\al-F_\al\in \Hom_G[\scP,\scP].
  \end{equation}
$T_\al$ and $\Theta_\al$ form a set of generators of
$\Hom[\scP,\scP]$ and satisfy the defining relations in the
Bernstein-Lusztig presentation (\cite{L1}) for the
Iwahori-Hecke algebra. 
% In other words, $\Hom[\scP,\scP]$ is isomorphic to the Iwahori-Hecke algebra. 
\end{theorem}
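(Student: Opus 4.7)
The strategy is to exploit the identification $\Hom_G[\scP,\scP] \cong \Ca H(\Ca I\backslash G/\Ca I)^{opp}$ from the preceding proposition, and to verify that $T_\alpha$ and $\Theta_\alpha$ correspond to a set of generators satisfying the Bernstein-Lusztig defining relations. The first step is to establish that $T_\alpha = J_\alpha - F_\alpha$ is a genuine element of $\Hom_G[\scP,\scP]$, not merely a formal expression. The integral defining $J_\alpha$ in \eqref{eq:intertwiningoperator} converges only on a cone in $\widehat A$, and its meromorphic continuation acquires a simple pole along $\{\Theta_\alpha = 1\}$; the scalar $F_\alpha = (q-1)/(1-\Theta_\alpha^{-1})$ is designed to have a matching pole with precisely the correct residue, so the difference is regular. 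To make this rigorous on the universal module $\scP$, we compute $T_\alpha(\deltaunit)$ directly: splitting the integral via the Iwasawa decomposition $\Ca I = \Ca I^- A_0 \Ca I^+$, the integrand becomes locally constant on finitely many cells, and the resulting element of $\scP^{\Ca I}$ has no denominators.

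Next we verify the relations. The lattice commutation $\Theta_a \Theta_b = \Theta_{a+b}$ is immediate from \eqref{deftheta}. For the Bernstein cross relation we use the crucial observation that $J_\alpha$, viewed as an intertwiner $I(\nu) \to I(s_\alpha\nu)$, twists the $A$-action by $s_\alpha$, so $J_\alpha \Theta_x = \Theta_{s_\alpha x} J_\alpha$ on the rational extension of $\scP$. Since $F_\alpha$ is a function of $\Theta_\alpha$ alone and commutes with every $\Theta_x$, one computes
\begin{equation*}
T_\alpha \Theta_x - \Theta_{s_\alpha x} T_\alpha = \Theta_{s_\alpha x} F_\alpha - F_\alpha \Theta_x = F_\alpha(\Theta_{s_\alpha x} - \Theta_x),
\end{equation*}
which reproduces the cross relation of Definition \ref{d:2.1} in the equal-parameter specialization $\mathbf q^2 = q$ (after reconciling with the opposite-algebra identification indicated in the Remark above). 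For the quadratic relation $(T_\alpha + 1)(T_\alpha - q) = 0$, we reduce to rank one: the composition $J_\alpha^2$ acts by a Harish-Chandra $c$-function product $c_\alpha(\Theta) c_{-\alpha}(\Theta)$, and substituting $J_\alpha = T_\alpha + F_\alpha$ converts this scalar identity into the required quadratic polynomial in $T_\alpha$ with roots $-1$ and $q$.

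The braid relations for the $T_\alpha$ follow from the standard independence of the intertwining operators $J_w$ on the choice of reduced expression, together with the compatibility of the $F_\alpha$-corrections already present in the Bernstein presentation of $\Ca H$. Generation is then a dimension count: the previous proposition identifies $\Hom_G[\scP,\scP]$ with $\Ca H^{opp}$, and the subalgebra generated by $T_\alpha$ and $\Theta_a$ now satisfies the full set of Bernstein-Lusztig relations, hence realizes all of $\Ca H^{opp}$. The principal obstacle is the rigorous justification of regularity in the first step, which must be carried out on the universal module $\scP$ rather than on individual principal series $I(\nu)$, simultaneously with pinning down the precise Hecke parameter in the quadratic relation; both require careful bookkeeping of the Haar measure on $\bar N_\alpha$ and the normalization of the $c$-function scalar.
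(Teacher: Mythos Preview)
Your proposal is correct and follows the same line as the paper's argument: the paper gives only a brief sketch, reducing (via the split hypothesis) to a rank-one $SL(2)$ calculation and observing that subtracting $F_\al$ cancels the pole of $J_\al$ along $1-\Theta_{-\al}$. Your write-up fleshes this out by checking each Bernstein--Lusztig relation separately (cross relation from $J_\al\Theta_x=\Theta_{s_\al x}J_\al$, quadratic relation from $J_\al^2=c_\al c_{-\al}$, braid relations from independence of reduced expression), but the underlying mechanism---rank-one reduction plus $c$-function bookkeeping---is the same.
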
 
\begin{proof}[Sketch of proof]
Because the group is split, this reduces to a calculation in $SL(2)$. The operator $J$ has a term  which is a rational function in $\Theta_\al$ with $1-\Theta_{-\al}$ in the denominator,  and subtracting $F_\al$ removes the  singularity. 

%{\color{blue} The calculations are in the file labelled star}.
\end{proof}

\begin{remark}
For a classical $p$-adic group $G$ and any Bernstein projective module $\C
P$, it is shown in \cite{He} that a generalization of Theorem
\ref{t:End-P} holds, namely, $\End_G[\C P]$ is naturally isomorphic to
an extended affine Hecke algebra with unequal parameters.
\end{remark}

  \begin{proposition}
    There is $f_\al\in \scH(K_\ell \backslash G/K_\ell)$ and $\tau_\al:\sig\longrightarrow\sig$ such that
    \begin{equation}
      \label{eq:6.3.2}
  <\Phi(\delta_{K_\ell w_0 P,x}),\Psi(T_\al(\delta_{K_\ell
    P,y}))>=<\Phi(\delta_{K_\ell w_0 P,x}), \Psi\big(\Pi(f_\al)(\delta_{K_\ell P,\tau_\al(y)})\big)>.
    \end{equation}
  \end{proposition}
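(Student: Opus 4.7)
The goal is to exhibit $f_\al\in\scH(K_\ell\backslash G/K_\ell)$ and an involution $\tau_\al$ of $\sig$ for which
\[
T_\al\big(\delta_{K_\ell P,y}\big)=\Pi(f_\al)\big(\delta_{K_\ell P,\tau_\al(y)}\big);
\]
once this holds, applying $\Psi$ and pairing with $\Phi(\delta_{K_\ell w_0 P,x})$ yields the stated identity. The plan is to use the decomposition $T_\al=J_\al-F_\al$ from Theorem \ref{t:End-P} and to compute each piece explicitly on this particular vector, with the singular parts cancelling against one another.

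Since $J_\al$ is defined by integration along the single root subgroup $N_\al$ and $F_\al$ is a rational expression in $\Theta_\al$, the computation localizes to the rank-one Levi attached to $\al$, exactly as in the sketch of proof of Theorem \ref{t:End-P}. Inside this $SL(2)_\al$-copy, $\tau_0$ restricts to an involution $\tau_\al\colon\sig\to\sig$, and only $N_\al$, $\ol N_\al$ and the associated minimal parabolic enter the formula \eqref{eq:intertwiningoperator}. This is the decisive simplification: it reduces the assertion to an essentially rank-one Iwahori-spherical calculation, of exactly the type already used to show that $T_\al$ preserves the Iwahori-fixed space.

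In that rank-one setting, I would split the integration domain in $J_\al(\delta_{K_\ell P,y})$ according to the Iwahori decomposition of $N_\al$, namely into $N_\al\cap K_\ell$ and its complement stratified by the action of a contracting element of $A$. The compact piece, after unfolding, packages as convolution by $f_\al:=\delta_{K_\ell s_\al K_\ell}$ (together with the pointwise twist $\tau_\al$) applied to $\delta_{K_\ell P,\tau_\al(y)}$. The noncompact piece produces a geometric series in $\Theta_\al^{-1}$ which, by direct inspection, equals $F_\al(\delta_{K_\ell P,y})$. Subtracting $F_\al$ as in $T_\al=J_\al-F_\al$ then kills the noncompact contribution---this is exactly the cancellation that makes $T_\al$ well-defined on the Iwahori-fixed space---leaving $\Pi(f_\al)(\delta_{K_\ell P,\tau_\al(y)})$ as claimed.

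The main obstacle is the bookkeeping in this splitting of the $J_\al$ integral: one must carefully track how the $\sig$-valued function $y$ transforms when the Iwasawa decomposition factors of $n\in N_\al$ are moved past the argument of $f$, and identify precisely which part of the resulting expression assembles into Hecke convolution versus the geometric series in $\Theta_\al^{-1}$. Once this identification is made, the cancellation with $F_\al$ is automatic from Theorem \ref{t:End-P}, and the stated equality follows by applying $\Psi$ and pairing with $\Phi(\delta_{K_\ell w_0 P,x})$.
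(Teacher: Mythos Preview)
Your overall strategy---reduce to the rank-one subgroup $SL(2)_\al$ and express $T_\al(\delta_{K_\ell P,y})$ as a Hecke convolution---matches the paper's, but the mechanism differs. The paper does not split the $J_\al$ integral directly at the $K_\ell$ level. Instead, in $SL(2)$ it conjugates by a power of the contracting element $a=\begin{pmatrix}\varpi&0\\0&\varpi^{-1}\end{pmatrix}$, using $a^{-\ell}K_\ell B\,a^{\ell}=\Ca I^- B$, so that $\Pi(a^{-\ell})\delta_{K_\ell B,y}$ becomes an Iwahori-fixed vector $\delta_{\Ca I B,\cdot}$. Since $T_\al$ commutes with both $\Pi(a^{\pm\ell})$ and $\Pi(\delta_{\Ca I a^{-\ell}\Ca I})$, and since on Iwahori-fixed vectors $T_\al$ is already known from Theorem~\ref{t:End-P} to act as convolution by an element of $\scH(\Ca I\backslash G/\Ca I)$, the claim follows by translating back. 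The paper's $f_\al$ is thus a composite of an Iwahori-Hecke element $\scT_\al$ with the translations $\Pi(a^{\pm\ell})$, not the single coset function $\delta_{K_\ell s_\al K_\ell}$ you propose; one then argues as in Proposition~\ref{p:starfortheta} to move these across the pairing.

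Your direct-splitting approach is plausible in spirit, but the two specific identifications you rely on are where the difficulty lies. The assertion that the compact piece of the $N_\al$-integral on $\delta_{K_\ell P,y}$ packages \emph{exactly} as convolution by $\delta_{K_\ell s_\al K_\ell}$, and that the noncompact tail is \emph{exactly} $F_\al(\delta_{K_\ell P,y})$ (not $F_\al$ plus corrections), is delicate at depth $\ell>0$: the stratification of $N_\al$ by $K_\ell$-cosets does not line up as cleanly with the $\Theta_\al$-geometric series as it does at the Iwahori level. The paper sidesteps this bookkeeping entirely by pushing the computation to the Iwahori level, where the identification of $T_\al$ with a Hecke generator is already in hand, and only the commutation with $a^{\pm\ell}$ needs to be checked.
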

  
  \begin{proof}
{This follows from the formula of $J_\al$ as an integral. We want $T_\al(\delta_{K_\ell P,y})=\Pi(f_\al)(\delta_{K_\ell
    P,y})$.}
 
\medskip   
For $SL(2)$, let  $K_\ell$ be the usual congruence subgroup. Let
$a:=\begin{bmatrix}\varpi&0\\0&\varpi^{-1}\end{bmatrix}.$ 
Then $\Ca IB=\Ca I^-B,$ and $a^{-\ell} K_\ell Ba^{\ell}=\Ca I^-B.$ Thus
\[
\Pi(a^{-\ell})(\delta_{K_\ell B,\al})=\delta_{\Ca Ia^{-\ell}B,a^\ell\al}=\Pi(\delta_{\Ca Ia^{-\ell}\Ca I})\delta_{\Ca IB,\al}.
\]
$T_\al$ commutes with $\Pi(\delta_{\Ca Ia^{-\ell}\Ca I})$ and $\Pi(a^{-\ell})$, and is computable on $\delta_{\Ca I B,\al}$. it can be written as convolution with a $\Ca I-$biinvariant function.
The conclusion of the calculation is that $T_\al(\delta_{K_\ell B,\al})$ can be expressed as  convolution with an element $\scT_\al\in \Ca H(\Ca I\backslash G/\Ca I)$ and composition with a $\Pi(a^{\pm\ell})$.    We can then argue as in Proposition \ref{p:starfortheta} to conclude that 
\begin{equation}
\langle\Phi,\Psi\circ T_\al\rangle=\langle\Phi\circ T_\al,\Psi\rangle.
\end{equation}
\end{proof}
We summarize the results.
\begin{theorem}\label{thm:bullet}
In the case of Iwahori fixed vectors, unramified principal series, $\Ca H:=\Hom[\scP,\scP]$ inherits a natural  star operation $\bullet$  from the unitary structure of $\scP$ satisfying
\[
\langle
\Phi,\Psi\circ\scR\rangle=\langle\Phi\circ\scR^\bullet,\Psi\rangle,
\quad \Phi,\Psi, \C R\in \Ca H.
\]
In particular,
\[
\begin{aligned}
&T_\al^\bullet=T_\al,
&\Theta^\bullet=\Theta .
\end{aligned}
\]

\end{theorem}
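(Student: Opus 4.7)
My plan is to define $\scR^\bullet$ as the adjoint of $\scR$ with respect to the hermitian sesquilinear form (\ref{form}) on $\Ca H$, so that the displayed identity in the theorem becomes the very definition of $\bullet$. I would then verify, in order: (i) the form is non-degenerate so that $\scR^\bullet$ is uniquely determined; (ii) $\scR^\bullet$ actually lies in $\Ca H = \Hom_G[\scP,\scP]$; (iii) $\bullet$ is a conjugate-linear involutive anti-automorphism; (iv) the asserted values on the Bernstein--Lusztig generators hold.

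For (i), I would use the identification $\Ca H \cong \scP^{\Ca I}$ from the preceding proposition together with the fact that $\scP$ carries a non-degenerate positive definite $G$-invariant pairing. If $m_{\Phi,\Psi}=0$ for every $\Psi$, then running $\Psi$ over a basis realized via convolution by elements of $\Ca H(\Ca I\backslash G/\Ca I)$ forces all $K_\ell$-level matrix coefficients of $\ol\Phi$ against $\scP^{\Ca I}$ to vanish, whence $\Phi=0$.

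For (iii), conjugate linearity and involutivity are formal consequences of the hermitian symmetry $m_{\Phi,\Psi} = \overline{m_{\Psi,\Phi}}$ established in the proposition preceding \ref{p:starfortheta}. Anti-multiplicativity is a direct manipulation: for $\scR_1,\scR_2,\Phi,\Psi \in \Ca H$,
\[
\langle \Phi \circ (\scR_1 \scR_2)^\bullet, \Psi\rangle = \langle \Phi, \Psi\circ \scR_1 \circ \scR_2\rangle = \langle \Phi\circ\scR_2^\bullet\circ\scR_1^\bullet, \Psi\rangle,
\]
and non-degeneracy gives $(\scR_1\scR_2)^\bullet = \scR_2^\bullet \scR_1^\bullet$.

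For (iv), Proposition \ref{p:starfortheta} is precisely the statement $\Theta_a^\bullet = \Theta_a$, while the proposition immediately preceding the theorem produces $T_\al^\bullet = T_\al$ by exhibiting $T_\al$ as (essentially) convolution with a self-adjoint $\Ca I$-biinvariant kernel. Since Theorem \ref{t:End-P} shows that the $T_\al$ and $\Theta_x$ generate $\Ca H$, this simultaneously verifies (ii): $\scR^\bullet$ lies in $\Ca H$ whenever $\scR$ is one of the generators, and the anti-multiplicativity from (iii) then propagates the conclusion to arbitrary $\scR \in \Ca H$. The main obstacle I anticipate is the $SL(2)$-reduction underlying the $T_\al$ calculation; one must organize it so that the convolution kernel realizing $T_\al$ is manifestly invariant under $f\mapsto f^*$, so that the identity $\langle\Phi,\Psi\circ T_\al\rangle = \langle\Phi\circ T_\al,\Psi\rangle$ genuinely translates to $T_\al^\bullet = T_\al$ rather than to equality with some other element of $\Ca H$. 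All remaining steps follow formally from the adjoint construction.
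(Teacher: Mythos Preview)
Your proposal is correct and matches the paper's approach: the theorem is explicitly presented there as a summary of the preceding propositions (hermitian symmetry of the form, self-adjointness of $\Theta_a$ in Proposition \ref{p:starfortheta}, the generator statement Theorem \ref{t:End-P}, and the self-adjointness of $T_\al$), so assembling those pieces via the adjoint construction is exactly what is intended. One small point: the paper does not argue that the convolution kernel for $T_\al$ is literally $*$-invariant; instead it expresses $T_\al(\delta_{K_\ell B,\al})$ as a combination of an $\Ca I$-biinvariant convolution and translations $\Pi(a^{\pm\ell})$, and then invokes the same mechanism as in Proposition \ref{p:starfortheta} to move $T_\al$ across the pairing, so your anticipated obstacle is handled by reduction to the $\Theta$ case rather than by direct self-adjointness of a kernel.
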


\section{Star operations: the graded affine Hecke algebra}\label{sec:2}

\subsection{Graded affine Hecke algebra}

We fix an $\bR$-root system $\Phi=(V,R,V^\vee, R^\vee)$. This means
that $V, V^\vee$ are finite dimensional $\bR$-vector spaces, with a
perfect bilinear pairing $(~,~): V\times V^\vee\to \bR$, where $R\subset
V\setminus\{0\},$ $R^\vee\subset V^\vee\setminus\{0\}$ are finite
subsets in bijection 
\begin{equation}
R\longleftrightarrow R^\vee,\ \al\longleftrightarrow\al^\vee,\
\text{ satisfying }(\al,\al^\vee)=2. 
\end{equation}
Moreover, the reflections
\begin{equation}
s_\al: V\to V,\ s_\al(v)=v-(v,\al^\vee)\al, \quad s_\al:V^\vee\to
V^\vee,\ s_\al(v')=v'-(\al,v')\al^\vee, \quad \al\in R, 
\end{equation}
leave $R$ and $R^\vee$ invariant, respectively. Let $W$ be the
subgroup of $GL(V)$ (respectively $GL(V^\vee)$) generated by
$\{s_\al:~\al\in R\}$. 
We  assume that the root system $\Phi$ is reduced, meaning that
$\al\in R$ implies $2\al\notin R$. 
We fix a choice of simple roots $\Pi\subset R$, and consequently,
positive roots $R^+$ and positive coroots $R^{\vee,+}.$ Often, we will
write $\al>0$ or $\al<0$ in place of $\al\in R^+$ or $\al\in (-R^+)$,
respectively. The complexifications of $V$ and $V^\vee$ are denoted by
$V_\bC$ and $V^\vee_\bC$, respectively, and we denote by $\bar{\ }$
the complex conjugations of $V_\bC$ and $V^\vee_\bC$ induced by $V$
and $V^\vee$, respectively.  {Extend $(\ ,\ )$ linearly to $V_\bC\times
V_\bC^\vee.$ Then 
\begin{equation}
\overline{(v,u)}=(\overline v, \overline{u}),\text{ for all }v\in
V_\bC,\ u\in V_\bC^\vee. 
\end{equation}
}
Let $k: \Pi\to \bR$ be a function such that $k_\al=k_{\al'}$ whenever
$\al,\al'\in \Pi$ are $W$-conjugate. Let $\bC[W]$ denote the group
algebra of $W$ and $S(V_\bC)$ the symmetric algebra over $V_\bC.$ The
group $W$ acts on $S(V_\bC)$ by extending the action on $V.$ For every
$\al\in \Pi,$  denote the difference operator by
\begin{equation}\label{e:diffop}
\Delta: S(V_\bC)\to S(V_\bC),\quad
\Delta_\al(a)=\frac{a-s_\al(a)}{\al},\text{ for all }a\in S(V_\bC).
\end{equation}

\begin{definition}\label{d:graded}
The graded affine Hecke algebra $\bH=\bH(\Phi,k)$  is the unique
associative unital algebra generated by $\bA=S(V_\bC)$ and
$\{t_w: w\in W\}$ such that  
\begin{enumerate}
\item[(i)] the assignment $t_wa\mapsto w\otimes a$ gives an
  isomorphism $\bH\cong \bC[W]\otimes S(V_\bC)$ of
  $(\bC[W],S(V_\bC))$-bimodules; 
\item[(ii)] $a t_{s_\al}=t_{s_\al}s_\al(a)+k_\al \Delta_\al(a),$
  for all $\al\in \Pi$, $a\in S(V_\bC).$
\end{enumerate}
\end{definition}

{The center of $\bH$ is $S(V_\bC)^W$ (\cite{L1}). By Schur's Lemma, the center of $\bH$
acts by scalars on each irreducible $\bH$-module. The central
characters are parameterized by $W$-orbits in $V_\bC^\vee.$ If $X$ is
an irreducible $\bH$-module, denote by $\cc(X)\in W\backslash
V_\bC^\vee$ 
its central character. By abuse of notation, we may also denote by
$\cc(X)$ a representative in $V_\bC^\vee$ of the central character of
$X$. 

If $(\pi,X)$ is a finite dimensional $\bH$-module and $\lambda\in V_\bC^\vee$, denote 
\begin{equation}
X_\lambda=\{x\in X: \text{ for every }a\in S(V_\bC),\
(\pi(a)-(a,\lambda))^nx=0,\text{ for some }n\in \mathbb N\}.
\end{equation}
If $X_\lambda\neq 0$, call $\lambda$ an $\bA$-weight of $X$. 
Let $\Omega(X)\subset V_\bC^\vee$ denote the set of
$\bA$-weights of $X$. If $X$ has a central character, it is easy to see that $\Omega(X)\subset
W\cdot \cc(X).$

\begin{definition}[Casselman's criterion]\label{d:tempered} 
Set 
$$V^+=\{\om\in V: (\om,\al^\vee)>0,\text{ for all }\al\in\Pi\}.$$
An
  irreducible $\bH$-module $X$ is called tempered if 
$$(\om,\Re\lambda)\le
  0,\text{ for all }\lambda\in\Omega(X)\text{ and all }\om\in V^+.$$ 
A tempered module is
  called a discrete series module if all the inequalities are strict.
\end{definition}
}
When the root system $\Phi$ is semisimple, $\bH$ has a particular
discrete series module, the Steinberg module $\St$. This is a
one-dimensional module, on which $W$ acts via the $\sgn$
representation, and the only $\bA$-weight is $-\sum_{\al\in\Pi} k_\al
\om^\vee_\al,$ where $\om^\vee_\al$ is the fundamental coweight
corresponding to $\al.$

\subsection{An automorphism of $\bH$}
Let $w_0$ denote the long Weyl group element. Define an assignment
\begin{equation}\label{autom}
\delta(t_w)=t_{w_0 w w_0},\ w\in W,\quad \delta(\omega)=-w_0(\omega),\
\omega\in V_\bC.
\end{equation}

\begin{lemma}\label{l:autom}
Suppose $k_{\delta(\al)}=k_\al,$ for all $\al\in\Pi.$ The assignment $\delta$ from (\ref{autom}) extends to an involutive automorphism of
$\bH.$ When $w_0$ is central in $W$, $\delta=\Id$.
\end{lemma}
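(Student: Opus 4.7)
The plan is to verify that $\delta$ respects the defining relations of $\bH$ in Definition~\ref{d:graded}, thus extending to an algebra endomorphism, and then check involutivity directly. I will work from the Bernstein-type presentation, treating the $\bC[W]$-part and the $S(V_\bC)$-part separately before confronting the cross-relation, which is where the parameter hypothesis is used.

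First, since $w_0$ is an involution of $W$ (it has order two) and conjugation by $w_0$ is an automorphism of $W$, the assignment $w\mapsto w_0ww_0$ is an involutive automorphism of $W$, and it sends simple reflections to simple reflections: if $\al\in\Pi$, then $w_0s_\al w_0=s_{-w_0(\al)}$, and $\al':=-w_0(\al)\in\Pi$ because $-w_0$ is the opposition involution on $\Pi$. So $\delta$ extends to an automorphism of $\bC[W]$. On $V_\bC$, the map $\omega\mapsto -w_0(\omega)$ is linear and squares to $w_0^2=\Id$, hence extends multiplicatively to an involutive automorphism of $S(V_\bC)$; by the tensor-product decomposition (i) in Definition~\ref{d:graded}, these two define a well-defined additive involution of $\bH$.

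The main step is to check that $\delta$ preserves relation (ii), namely $a\,t_{s_\al}=t_{s_\al}s_\al(a)+k_\al\Delta_\al(a)$. A direct computation using $(-w_0(\omega),-w_0(\al^\vee))=(\omega,\al^\vee)$ and $(-w_0(\al))^\vee=-w_0(\al^\vee)$ shows that on $V_\bC$ (hence on $S(V_\bC)$ by multiplicativity) one has
\begin{equation*}
\delta\circ s_\al = s_{\al'}\circ\delta,\qquad \delta\circ\Delta_\al = \Delta_{\al'}\circ\delta,
\end{equation*}
where $\al'=-w_0(\al)$. Applying $\delta$ to relation (ii) thus yields
\begin{equation*}
\delta(a)\,t_{s_{\al'}} = t_{s_{\al'}} s_{\al'}(\delta(a)) + k_\al\,\Delta_{\al'}(\delta(a)).
\end{equation*}
But the defining relation for $\al'\in\Pi$ with input $b=\delta(a)$ reads $\delta(a)\,t_{s_{\al'}}=t_{s_{\al'}}s_{\al'}(\delta(a))+k_{\al'}\Delta_{\al'}(\delta(a))$. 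Comparing these two identities, the only possible obstruction is the parameter: they agree precisely when $k_\al=k_{\al'}=k_{\delta(\al)}$, which is the hypothesis. I expect this compatibility check — matching the two versions of the cross-relation and isolating the parameter condition — to be the genuine content; everything else is bookkeeping.

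Involutivity is immediate: $\delta^2(t_w)=t_{w_0^2 w w_0^2}=t_w$ and $\delta^2(\omega)=-w_0(-w_0(\omega))=w_0^2(\omega)=\omega$, so $\delta^2=\Id$. For the final assertion, assume $w_0$ is central in $W$. Then $w_0 s_\al w_0=s_\al$, i.e.\ $s_{-w_0(\al)}=s_\al$, which forces $-w_0(\al)=\al$ for every $\al\in\Pi$; hence $-w_0=\Id$ on $V$, and trivially $w_0 w w_0=w w_0^2=w$ on $W$. Both defining formulas for $\delta$ then reduce to the identity.
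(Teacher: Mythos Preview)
Your proof is correct and follows essentially the same approach as the paper's: extend $\delta$ separately to $\bC[W]$ and $S(V_\bC)$, then verify the cross-relation (ii) using that $-w_0$ permutes $\Pi$ together with the parameter hypothesis $k_{\delta(\al)}=k_\al$. You supply more detail than the paper does --- it checks the commutation only on linear elements $\omega\in V_\bC$ (which suffices since these generate) and gives no argument for the final clause about $w_0$ central, whereas you spell both out.
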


\begin{proof}
It is clear that $\delta$ is an automorphism of $\bC[W]$ and it also
extends to an automorphism on $S(V_\bC)$, so it remains to check the
commutation relation in Definition \ref{d:graded}:
\begin{equation}\label{e:basiccomm}
\omega t_{s_\al}-t_{s_\al} s_\al(\omega)=k_\al (\omega,\al^\vee),\
\al\in\Pi,\ \omega\in V_\bC.
\end{equation}
Then 
$$
\begin{aligned}
\delta(\omega) \delta(t_{s_\al})=&\delta(\omega) t_{s_{\delta(\al)}}=t_{s_{\delta(\al)}} s_{\delta(\al)}(\delta(\omega))+k_{\delta(\al)} (\delta(\omega), \delta(\al)^\vee)=\\
=&t_{s_{\delta(\al)}} s_{\delta(\al)}(\delta(\omega))+k_\al(\omega,\al^\vee).
\end{aligned}
$$
Notice that we have used the fact
that $\delta(\al)\in \Pi$ if $\al\in \Pi.$ It is easy to see that $\delta(s_\al(\omega))=s_{\delta(\al)}(\delta(\omega))$.

Since $w_0^2=1,$ $\delta^2=\Id.$
\end{proof}

Thus, one may define an extended graded Hecke algebra $\bH'=\bH\rtimes\langle\delta\rangle.$

\subsection{Star operations} 

\begin{definition}
Let $\kappa:\bH\to \bH$ be a conjugate linear involutive algebra
anti-automorphism. An $\bH$-module $(\pi,X)$ is said to be
$\kappa$-hermitian if $X$ has a hermitian form $(~,~)$ which is
$\kappa$-invariant, i.e., 
$$
(\pi(h)x,y)=(x,\pi(\kappa(h))y),\quad x,y\in X,\  h\in\bH.
$$
A hermitian module $X$ is $\kappa$-unitary if the $\kappa$-hermitian
form is positive definite. 
\end{definition}

{\begin{definition}\label{d:basicstars}
Define 
\begin{align}\label{e:star}
t_{w}^\star=t_{w^{-1}},\ w\in W,\quad \omega^\star=-t_{w_0}\cdot
\overline{\delta(\omega)}\cdot t_{w_0}= (\Ad t_{w_0}\circ\delta)
(\overline{\omega}),\ \omega\in V_\bC, 
\end{align}

and
\begin{align}\label{e:bullet}
t_{w}^\bullet=t_{w^{-1}},\ w\in W,\quad \omega^\bullet=\overline\omega,\ \omega\in V_\bC.
\end{align}
\end{definition}
}
\begin{lemma}\label{l:1.3.3}
The operations $\star$ and $\bullet$ defined in (\ref{e:star}) and (\ref{e:bullet}), respectively, extend to conjugate linear algebra anti-involutions of $\bH$.
\end{lemma}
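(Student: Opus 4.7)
The plan is to verify for each operation $\kappa\in\{\star,\bullet\}$ that (a) it is conjugate-linear, (b) the prescribed images of the generators satisfy the defining relations of $\bH$ with multiplication reversed, so that $\kappa$ extends to an anti-homomorphism of $\bH$, and (c) $\kappa^2=\Id$. Conjugate linearity is built into both definitions, so the work is in (b) and (c).

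For (b), the $\bC[W]$-part assigns $t_w\mapsto t_{w^{-1}}$, which is the standard anti-involution of $\bC[W]$ and takes care of the relations $t_wt_{w'}=t_{ww'}$. The restrictions to the commutative subalgebra $S(V_\bC)$ need only be algebra homomorphisms (anti-multiplicativity is automatic by commutativity): for $\bullet$, this is complex conjugation on $V_\bC$ extended multiplicatively; for $\star$, it is the composite of complex conjugation, the algebra automorphism $\delta$ of Lemma \ref{l:autom}, and inner conjugation by $t_{w_0}$ (noting $t_{w_0}^{-1}=t_{w_0}$ since $w_0^2=1$).

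The main step is the mixed relation $\omega t_{s_\al}-t_{s_\al}s_\al(\omega)=k_\al(\omega,\al^\vee)$. For $\bullet$, applying $\bullet$ as an anti-homomorphism transforms this into
\[
t_{s_\al}\overline\omega-s_\al(\overline\omega)t_{s_\al}=k_\al(\overline\omega,\al^\vee),
\]
using $k_\al\in\bR$, $\overline{(\omega,\al^\vee)}=(\overline\omega,\al^\vee)$, and $\overline{s_\al(\omega)}=s_\al(\overline\omega)$. This is a consequence of the original relation applied to $\eta=s_\al(\overline\omega)$ combined with the identity $(s_\al(\eta),\al^\vee)=-(\eta,\al^\vee)$. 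For $\star$, the analogous manipulation works after inserting the commutation $t_{s_\al}t_{w_0}=t_{w_0}t_{s_{\delta(\al)}}$ (a consequence of $w_0s_\al w_0=s_{-w_0(\al)}=s_{\delta(\al)}$), together with the invariance $k_{\delta(\al)}=k_\al$ and the pairing identity $(\delta(\omega),\delta(\al)^\vee)=(\omega,\al^\vee)$, which holds because $w_0$ preserves the pairing between $V$ and $V^\vee$.

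For (c), involutivity of $\bullet$ on each family of generators is immediate. For $\star$, $(\omega^\star)^\star=\omega$ is a short calculation using $t_{w_0}^2=1$, $\delta^2=\Id$, and the commutativity of $\delta$ with complex conjugation (both being induced by $\bR$-linear maps of $V$); explicitly, $\delta(\overline{\delta(\omega)})=\overline\omega$. The main obstacle is not conceptual but combinatorial bookkeeping: one must carefully commute $t_{w_0}$ past the $t_{s_\al}$, track the parallel actions of $\delta$ and $\overline{\ }$ on $V_\bC$ and $V_\bC^\vee$, and verify that all signs and all copies of $k_\al$ line up correctly.
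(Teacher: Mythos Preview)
Your proof is correct and is a detailed expansion of what the paper records as ``Straightforward by Lemma~\ref{l:autom}.'' One minor imprecision: for $\star$ the images of elements of $S(V_\bC)$ land in $\bH$, not in $S(V_\bC)$, so ``restriction to $S(V_\bC)$'' should be read as the map $S(V_\bC)\to\bH$; your actual argument is unaffected.

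The paper's implicit route is slightly more economical: once $\bullet$ is established, the relation $\star=(\Ad t_{w_0}\circ\delta)\circ\bullet$ (recorded just after the lemma) exhibits $\star$ as the composite of the anti-involution $\bullet$ with the automorphism $\Ad t_{w_0}\circ\delta$ of Lemma~\ref{l:autom}, so both the anti-homomorphism property and involutivity for $\star$ follow immediately from the corresponding facts for $\bullet$, $\delta$, and $\Ad t_{w_0}$ (together with the easy check that $\bullet$ commutes with $\delta$ and with $\Ad t_{w_0}$). Your direct verification of the mixed relation for $\star$ reaches the same conclusion by a parallel computation; the factorization just saves repeating that work.
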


\begin{proof}
Straightforward by Lemma \ref{l:autom}.
\end{proof}
{\begin{remark}
The two star operations just defined are related as follows

\begin{equation}\label{e:relstar}
\star=(\Ad t_{w_0} \circ \delta)(h)\circ\bullet.
\end{equation}
In particular, when $w_0$ is central in $W$, they are inner conjugate
to each other.
\end{remark}
}

\begin{lemma}\label{l:conjtw}
For every $w\in W$, $\omega\in V_\bC$, 
\begin{equation}\label{e:commute}
t_w\cdot \omega\cdot t_{w^{-1}}=w(\omega)+\sum_{\beta>0,
  w(\beta)<0}k_\beta (\omega,\beta^\vee)t_{s_{w(\beta)}}. 
\end{equation}
In particular,
\begin{equation}
\omega^\star=-\overline\omega+\sum_{\beta>0}k_\beta (\overline\omega,\beta^\vee) t_{s_\beta}.
\end{equation}
\end{lemma}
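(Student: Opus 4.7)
My approach is induction on $\ell(w)$. The base case $w=s_\al$ follows from a direct rearrangement of the defining relation in Definition \ref{d:graded}: starting from $\omega t_{s_\al}=t_{s_\al}s_\al(\omega)+k_\al(\omega,\al^\vee)$ and using $t_{s_\al}^{-1}=t_{s_\al}$, one obtains
\[
t_{s_\al}\omega t_{s_\al}=s_\al(\omega)+k_\al(\omega,\al^\vee)t_{s_\al},
\]
which matches the right-hand side of \eqref{e:commute} since the only $\beta>0$ with $s_\al(\beta)<0$ is $\beta=\al$, and $s_{s_\al(\al)}=s_{-\al}=s_\al$.

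For the inductive step, I would write $w=w's_\al$ with $\ell(w')=\ell(w)-1$ (so $w'(\al)>0$) and compute
\[
t_w\omega t_{w^{-1}}=t_{w'}\bigl(t_{s_\al}\omega t_{s_\al}\bigr)t_{(w')^{-1}}
\]
by inserting the base case. The term arising from $t_{w'}t_{s_\al}t_{(w')^{-1}}=t_{s_{w'(\al)}}$ provides precisely the new $\beta=\al$ contribution to the right inversion set, since $w(\al)=-w'(\al)$ and hence $s_{w(\al)}=s_{w'(\al)}$. The induction hypothesis applied to the remaining piece $t_{w'}s_\al(\omega)t_{(w')^{-1}}$ produces a sum indexed by $\gamma>0$ with $w'(\gamma)<0$; the bijection $\gamma\leftrightarrow\beta=s_\al(\gamma)$ between this set and $\{\beta>0,\;\beta\neq\al:w(\beta)<0\}$, together with the $W$-invariance of $k$ and the identity $(s_\al(\omega),\gamma^\vee)=(\omega,\beta^\vee)$, reassembles the asserted formula for $w$.

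For the specialization to $\omega^\star$, I would apply the main formula with $w=w_0$ and argument $w_0(\overline\omega)$, using $t_{w_0}^{-1}=t_{w_0}$ (since $w_0^2=1$) and the fact that the sum extends over all $\beta>0$. Reindexing via the involution $\gamma=-w_0(\beta)$ on positive roots, and invoking $k_{-w_0(\gamma)}=k_\gamma$ (because $-w_0$ is a length-preserving root-system automorphism, hence preserves $W$-orbits), $(w_0(\overline\omega),\beta^\vee)=-(\overline\omega,\gamma^\vee)$ by $W$-invariance of the pairing, and $s_{w_0(\beta)}=s_\gamma$, one obtains
\[
t_{w_0}w_0(\overline\omega)t_{w_0}=\overline\omega-\sum_{\gamma>0}k_\gamma(\overline\omega,\gamma^\vee)\,t_{s_\gamma}.
\]
The desired identity then follows from $\omega^\star=(\Ad t_{w_0}\circ\delta)(\overline\omega)=-t_{w_0}w_0(\overline\omega)t_{w_0}$, using $\delta(\overline\omega)=-w_0(\overline\omega)$.

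The content is entirely elementary; the only point requiring care is the combinatorial bookkeeping, namely verifying the decomposition $\{\beta>0:w(\beta)<0\}=\{\al\}\sqcup s_\al\bigl(\{\gamma>0:w'(\gamma)<0\}\bigr)$ that drives the induction, and correctly tracking the sign produced by the reindexing $\beta\mapsto-w_0(\beta)$ in the specialization to $w_0$.
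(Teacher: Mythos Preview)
Your induction on $\ell(w)$ is correct and complete; the bookkeeping in both the inductive step (the decomposition $N(w)=\{\al\}\sqcup s_\al N(w')$ for $w=w's_\al$ with $\ell(w')=\ell(w)-1$) and the $w_0$-specialization (the reindexing $\gamma=-w_0(\beta)$) is handled accurately.

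By way of comparison, the paper does not actually prove this lemma: it simply cites \cite[Theorem~5.6]{BM2}. Your argument is therefore a genuine, self-contained replacement for that citation. The induction you carry out is the standard (and essentially only) elementary route to this identity, so it is very likely what lies behind the cited result as well; the gain is that a reader of your version need not chase the reference.
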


\begin{proof}
This is \cite[Theorem 5.6]{BM2}.
\end{proof}

\subsection{Classification of involutions}\label{sec:inv} We define a filtration of
$\bH$ given by the degree in $S(V_\bC).$ Set $\deg t_w
a=\deg_{S(V_\bC)} a$ for every $w\in W$, and homogeneous element $a\in
S(V_\bC)$ and $F_i\bH=\text{span}\{h\in \bH: \deg h\le i\}.$ In
particular, $F_0\bH=\bC[W]$. Set $F_{-1}\bH=0.$ It is immediate from
Definition \ref{d:graded} that the associated graded algebra
$\overline\bH=\oplus_{i\ge 0} \overline \bH^i,$ where
$\overline\bH^i=F_i\bH/F_{i-1}\bH$, is naturally isomorphic to the graded Hecke
algebra for the parameter function $k_\al\equiv 0.$ 

\begin{definition}\label{d:admissible}
An automorphism (respectively, anti-automorphim) $\kappa$ of $\bH$ is called
\textit{filtered} if 
  $\kappa(F_i\bH)\subset F_i\bH,$ for all $i\ge 0.$ Notice that by
Definition \ref{d:graded}, this is equivalent with the requirement
that $\kappa(F_i\bH)\subset F_i\bH$ for $i=0,1.$ If, in addition,
$\kappa(t_w)=t_{w}$ (resp., $\kappa(t_w)=t_{w^{-1}}$), we say that
$\kappa$ is \textit{admissible}.  
\end{definition}

If $\kappa$ is a filtered automorphism, then $\kappa$ induces an
automorphism of the associated graded algebra $\overline \bH$ which
preserves that grading, i.e., $\kappa(\overline\bH^i)\subset
\overline\bH^i.$

\begin{lemma}\label{l:star-assoc-graded}
Assume the root system $\Phi$ is simple. Let $\kappa$ be an {admissible}
involutive automorphism (or anti-automorphism) of
$\overline\bH$ which preserves the grading
$\kappa(\overline\bH^i)\subset \overline\bH^i.$ Then
$\kappa(\om)=c_0\om,$ for all $\om\in V_\bC$, where $c_0$ is a constant
equal to $1$ or $-1$.
\end{lemma}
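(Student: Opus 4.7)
The plan is to analyze the restriction $\kappa|_{V_\bC}$ using the structure of $\overline\bH \cong S(V_\bC)\rtimes W$ (the associated graded algebra corresponds to zero parameters). First, I would deduce, from admissibility $\kappa(t_u)=t_{u^{\pm 1}}$ together with the commutation relation $t_u\omega t_u^{-1}=u(\omega)$ valid in $\overline\bH$, that $\kappa$ commutes with the adjoint $W$-action: $\kappa\circ\Ad(t_u)=\Ad(t_u)\circ\kappa$ in both the automorphism and anti-automorphism cases. This follows from a direct calculation: in the anti-automorphism case, $\kappa(t_u\omega t_u^{-1})=\kappa(t_{u^{-1}})\kappa(\omega)\kappa(t_u)=t_u\kappa(\omega)t_{u^{-1}}=\Ad(t_u)(\kappa(\omega))$, and analogously in the automorphism case.

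The crucial next step is to show $\kappa(V_\bC)\subset V_\bC$. Decomposing $\overline\bH^1=\bigoplus_{v\in W}t_v V_\bC$, write $\kappa(\omega)=\sum_{v\in W}t_v\alpha_v(\omega)$ with linear maps $\alpha_v\colon V_\bC\to V_\bC$, and the goal becomes showing $\alpha_v=0$ for $v\neq e$. Unpacking the equivariance $\kappa\circ\Ad(t_u)=\Ad(t_u)\circ\kappa$ in coordinates gives the constraint $\alpha_{uvu^{-1}}\circ u=u\circ\alpha_v$ for all $u,v\in W$; in particular $\alpha_e$ is $W$-equivariant. To force the off-diagonal $\alpha_v$ to vanish, I would combine these constraints with the involutive property $\kappa^2=\Id$, which translates into the quadratic relations $\sum_{u\in W}\alpha_{vu^{-1}}\alpha_u=\delta_{v,e}\cdot\Id$ on $V_\bC$, and exploit the irreducibility of $V_\bC$ as a $\bC[W]$-module together with the fact that for $v\neq e$ the adjoint $W$-representation on $t_v V_\bC$ is the $u$-twisted reflection representation, which pairs differently with $V_\bC$. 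This is the main technical obstacle: distinguishing the canonical copy $V_\bC=t_e V_\bC$ from the isomorphic $W$-constituents $t_v V_\bC$ ($v\neq e$) present in $\overline\bH^1$.

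Once $\kappa(V_\bC)\subset V_\bC$ is established, the endomorphism $\alpha_e=\kappa|_{V_\bC}\colon V_\bC\to V_\bC$ is $W$-equivariant. Since $\Phi$ is simple, the reflection representation $V_\bC$ is irreducible as a $\bC[W]$-module (a classical fact for Weyl groups of simple root systems), so Schur's Lemma yields $\alpha_e=c_0\cdot\Id_{V_\bC}$ for some $c_0\in\bC$. Finally, the involutivity $\kappa^2=\Id$ applied to $\omega\in V_\bC$ gives $c_0^2\omega=\omega$, hence $c_0=\pm 1$, completing the proof.
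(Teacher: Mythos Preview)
Your framework coincides with the paper's: expand $\kappa(\omega)$ in the $\bC[W]$-basis of $\overline\bH^1$, use the commutation relation (equivalently, $\Ad$-equivariance) to see that the identity-component map is $W$-intertwining, apply Schur to get a scalar $c_0$, and then invoke $\kappa^2=\Id$. The genuine gap is exactly the step you yourself flag as the ``main technical obstacle'': forcing $\alpha_v=0$ for $v\neq e$.

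Your proposed mechanism for that step does not work. The summands $t_v V_\bC$ are not individually $\Ad(W)$-stable; over a conjugacy class $C$ they assemble into $\bigoplus_{v\in C} t_v V_\bC\cong \Ind_{Z_W(v_0)}^W V_\bC$, and for $C$ the class of a reflection Frobenius reciprocity gives
\[
\Hom_W\bigl(V_\bC,\ \Ind_{Z_W(s_\alpha)}^W V_\bC\bigr)\ \cong\ \Hom_{Z_W(s_\alpha)}(V_\bC,V_\bC)\ \neq\ 0,
\]
so the reflection representation \emph{does} occur outside $t_e V_\bC$. Hence $W$-equivariance of $\kappa$ cannot by itself separate the canonical copy $t_e V_\bC$ from the other pieces, and your quadratic relations $\sum_u \alpha_{vu^{-1}}\alpha_u=\delta_{v,e}\,\Id$ together with the conjugacy constraint $\alpha_{uvu^{-1}}\circ u=u\circ\alpha_v$ are not, in the form you state them, visibly strong enough either. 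The sentence ``pairs differently with $V_\bC$'' is therefore not a proof sketch but a hope, and a false one.

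The paper's route at this step is purely algebraic and does not use any $W$-module comparison. Writing $\kappa(\omega)=\sum_{y} f_y(\omega)\,t_y$, the relation $\kappa^2(\omega)=\omega$ is expanded as $\sum_{x,y}(f_x\circ f_y)(\omega)\,t_{xy}=\omega$; from this the paper extracts $f_x\circ f_y=0$ for $x\neq y^{-1}$ and $\sum_x f_x\circ f_{x^{-1}}=\Id$, and then specializes $y=1$ in the first relation. Since $f_1=c_0\,\Id$ by Schur, this yields $c_0 f_x=0$ for all $x\neq 1$, hence $f_x=0$, and then the second relation gives $c_0^2=1$. The idea you are missing is to feed the already-known scalar form of the identity component $f_1=c_0\,\Id$ back into the involutivity constraint, rather than to search for a $W$-module obstruction that is not available.
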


\begin{proof}We prove the statement in the case when $\kappa$ is an
  automorphism. 
{Since $\bH$ is isomorphic to the opposite algebra
  $\bH^{\mathsf{opp}}$ via the map $\tau:$ $t_{w}\mapsto t_{w^{-1}}$,
  $\omega\mapsto \omega$, the classification of anti-automorphisms
  follows by composition with $\tau.$
}

By the assumptions on $\kappa$,
\begin{equation}
\kappa(\om)=\sum_{y\in W} f_y(\om) t_y,\ \om\in V_\bC,
\end{equation}
where $f_y:V_\bC\to V_\bC$ is a linear function, for every $y\in W.$
Let $\al$ be a simple root. The commutation relation in $\overline\bH$
is $t_{s_\al}\om=s_\al(\om) t_{s_\al}$. Applying $\kappa$ to this
relation, it follows, by a simple calculation, that
$$s_\al(f_{s_\al x}(\om))=f_{xs_\al}(s_\al(\om)),\text{ for all }x\in
W.$$
In particular, setting $x=s_\al,$ we see that
\begin{equation}
s_\al(f_1(\om))=f_1(s_\al(\om)).
\end{equation}
Since the root system was assumed simple, this means that $f_1$ is a
scalar function $f_1(\omega)=c_0\omega,$ for some $c_0\in\bC.$ 

Now, we use that $\kappa$ is an involution, $\kappa^2(\om)=\om,$ which
implies $\sum_{x,y\in W} (f_x\circ f_y)(\om) t_{xy}=\om.$ Thus 
\begin{equation}
\sum_{x\in W} f_x\circ f_{x^{-1}}=\text{Id},\text{ and } f_x\circ
f_y=0,\text{ if }x\neq y^{-1}.
\end{equation}
Specializing $y=1$ in the second relation, we see that $f_x=0$ if
$x\neq 1.$ Then the first relation implies $c_0^2=1$, and this is the
claim of the lemma.
\end{proof}

\begin{proposition}\label{p:classification} Assume the root system
  $\Phi$ is simple.  If $\kappa$ is an admissible involutive automorphism or
anti-automorphism (in the sense of Definition \ref{d:admissible}),
then $$\kappa(\omega)=\omega,\text{ for all }\omega\in V,$$
or $$\kappa(\omega)=t_{w_0}\cdot \delta(\omega)\cdot t_{w_0},
\text{ for all }\omega\in V.$$ 
In particular, the only admissible conjugate
linear involutive anti-automorphisms of $\bH$ are $\star$ and
$\bullet$ from Lemma \ref{l:1.3.3}. 
\end{proposition}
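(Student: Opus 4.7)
The plan is to pin down $\kappa(\omega)$ for $\omega \in V_\bC$, since $\kappa(t_w)$ is already fixed by admissibility. By Lemma~\ref{l:star-assoc-graded}, $\kappa$ induces on the associated graded $\overline{\bH}$ the map $\omega \mapsto c_0\omega$ with $c_0 \in \{\pm 1\}$, so in $\bH$ we may write $\kappa(\omega) = c_0\omega + \phi(\omega)$, where $\phi : V_\bC \to \bC[W] = F_0\bH$ is linear and we expand $\phi(\omega) = \sum_{y \in W} a_y(\omega)\,t_y$. The task is then to determine $\phi$ using the remaining Hecke algebra relations.

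The case $c_0 = 1$ is quick. Applying $\kappa$ to $\omega t_{s_\al} - t_{s_\al}s_\al(\omega) = k_\al(\omega,\al^\vee)$ and using admissibility to cancel the leading terms yields $\phi(\omega)\,t_{s_\al} = t_{s_\al}\phi(s_\al\omega)$, i.e., $\phi$ is $W$-equivariant under the conjugation action on $\bC[W]$. If $\kappa$ is a linear automorphism, then $\kappa(\phi(\omega)) = \phi(\omega)$ by admissibility, so $\kappa^2(\omega) = \omega + 2\phi(\omega)$ and involutivity forces $\phi = 0$. If $\kappa$ is an anti-automorphism, involutivity instead yields $a_{y^{-1}} = -a_y$; combined with the degree-one consequence of $[\kappa(\omega),\kappa(\omega')] = 0$ (which forces $a_y = 0$ unless $y$ is a reflection, with $a_{s_\beta}(\omega) = \mu_\beta(\omega,\beta^\vee)$ for some scalar), the self-inverseness of reflections gives $\mu_\beta = 0$. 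Either way, $\kappa(\omega) = \omega$.

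The case $c_0 = -1$ is the heart of the argument. The analogous manipulation now produces a source term, $t_{s_\al}\phi(\omega)t_{s_\al} - \phi(s_\al\omega) = 2k_\al(\omega,\al^\vee)\,t_{s_\al}$. Reading off the $t_1$-coefficient gives $a_{s_\al}(\al) = 2k_\al$ for each $\al \in \Pi$, while the other coefficients satisfy the equivariance $a_u(\omega) = a_{s_\al u s_\al}(s_\al\omega)$ for $u \neq s_\al$. The commutativity argument again localizes $\phi$ to reflections, with $a_{s_\beta}(\omega) = \mu_\beta(\omega,\beta^\vee)$, and the residual equivariance becomes $\mu_{s_\al\beta} = \mu_\beta$ for $\beta \in R^+ \setminus \{\al\}$. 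Initializing $\mu_\al = k_\al$ on simple roots and propagating through each $W$-orbit of $R^+$ via allowable simple reflections (using the simplicity of the root system for connectivity), we obtain $\mu_\beta = k_\beta$ for all $\beta \in R^+$. A direct computation using Lemma~\ref{l:conjtw} with $w = w_0$, together with $w_0 R^+ = -R^+$ and the $W$-invariance of $k$, identifies $-\omega + \sum_{\beta > 0} k_\beta(\omega,\beta^\vee)\,t_{s_\beta}$ with $t_{w_0}\delta(\omega) t_{w_0}$. The conjugate-linear case runs in parallel with $\omega$ replaced by $\bar\omega$ throughout (using that $k_\al$ is real), giving $\bullet$ when $c_0 = 1$ and $\star$ when $c_0 = -1$.

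The main obstacle is the $c_0 = -1$ case, where one must disentangle the interlocking constraints coming from the $t_{s_\al}$-relation, the commutativity $[\omega,\omega'] = 0$, and involutivity into a precise determination of $\phi$, and then carry out the combinatorial step of propagating $\mu_\beta = k_\beta$ from simple roots to the full positive system through the restricted near-$W$-equivariance.
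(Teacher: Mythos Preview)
Your argument is correct and follows the same strategy as the paper's: extract from the relation $\omega t_{s_\al}-t_{s_\al}s_\al(\omega)=k_\al(\omega,\al^\vee)$ the conjugation identity $a_{s_\al y s_\al}(\omega)=a_y(s_\al\omega)$ together with the boundary datum $a_{s_\al}(\al)=(1-c_0)k_\al$, use commutativity of $S(V_\bC)$ to force $a_y=0$ unless $y$ is a reflection with $a_{s_\beta}(\omega)=\mu_\beta(\omega,\beta^\vee)$, and then propagate $\mu_\beta=k_\beta$ from simple roots. The only organizational differences are that you handle automorphisms and anti-automorphisms side by side (the paper reduces the latter to the former via the anti-isomorphism $t_w\mapsto t_{w^{-1}}$, $\omega\mapsto\omega$), and that you make the $c_0=1$ step explicit via involutivity whereas the paper simply asserts $g_y=0$ there; your treatment is in fact cleaner on this point. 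One small slip: when you write ``reading off the $t_1$-coefficient gives $a_{s_\al}(\al)=2k_\al$'', you mean the $t_{s_\al}$-coefficient of your displayed identity (the right-hand side is $2k_\al(\omega,\al^\vee)\,t_{s_\al}$, not a scalar). Also, simplicity of $\Phi$ is really used through Lemma~\ref{l:star-assoc-graded} to pin down a single $c_0$; your height-induction for $\mu_\beta=k_\beta$ works in any reduced root system since $s_\al$ permutes $R^+\setminus\{\al\}$, so you need not invoke simplicity again there.
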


\begin{proof}
As before, it is sufficient to only treat the case when $\kappa$ is an
automorphism.
The hypotheses imply that $\kappa^2=\Id$, and in addition, by
Lemma \ref{l:star-assoc-graded}, $\kappa$ must be of the form:
\begin{align*}
&\kappa(t_w)=t_w,\quad w\in W;
&\kappa(\omega)=c_0\omega+\sum_{y\in W}g_y(\omega) t_y, \quad \omega\in V_\bC,
\end{align*}
where $g_y:V_\bC\to\bC$, $y\in W,$ are
linear, {and $c_0=\pm 1$}. 
Since $\kappa$ has to preserve the commutation relation  
\[  t_{s_\al}\omega-s_\al(\omega)t_{s_\al} = k_\al (\omega,\al^\vee),\ \al\in\Pi, \omega\in V_\bC,\]
we find that
\[ 
c_0 t_{s_\al} \omega-c_0 s_\al(\omega) t_{s_\al}+\sum_{y\in W}
g_y(\omega) t_{s_\al y}-\sum_{x\in W} g_x(s_\al(\omega))t_{x
  s_\al}=k_\al(\omega,\al^\vee),\] 
or equivalently,
\begin{equation}\label{e:comm}
\sum_{y\in W}
g_y(\omega) t_{s_\al y}-\sum_{x\in W} g_x(s_\al(\omega))t_{x
  s_\al}=k_\al(1-c_0)(\omega,\al^\vee). 
\end{equation}
This implies that
\begin{equation}\label{e:gconj}
g_{s_\al y s_\al}(\omega)=g_{y}(s_\al(\omega)), \text{ for all }\al\in
\Pi, y\in W, y\neq s_\al, \text{ and }\omega\in V_\bC, 
\end{equation}
and
$$
g_{s_\al}(\omega)-g_{s_\al}(s_\al(\omega))=k_\al(1-c_0)(\omega,\al^\vee),
$$
from which one easily concludes that 
\begin{equation}\label{e:g_al}
g_{s_\al}(\al)=k_\al (1-c_0),\ \al\in\Pi.
\end{equation}
There are two cases:
\begin{enumerate}
\item $c_0=1$,
\item $c_0=-1$. 
\end{enumerate}
In case (1),  $g_y=0$ for all $y$, so $\kappa(\omega)=\omega$, $\omega\in V$.

When $w_0=-\Id,$ we note that since $\kappa(t_{w_0})=t_{w_0},$
 $\kappa':=\kappa\circ\Ad t_{w_0}=\Ad t_{w_0}\circ\kappa$ is as in
 case (1). 

{When $w_0\ne -\Id,$ recall $\delta$ the automorphism defined
 in (\ref{autom}). 
If we knew that $\kappa\circ
 \delta=\delta\circ\kappa,$ the same proof would apply, since $\delta\circ\Ad
 t_{w_0}\circ\kappa$ is of the same type as $\kappa,$ but $c_0$
 changes to $-c_0$. Since this is not clear
 to us, we prove directly that in case (2),
 $\kappa(\omega)=t_{w_0}\cdot\delta(\omega)\cdot t_{w_0}$.}

\smallskip

We first show that $g_y=0$ unless $y=s_\beta$ for some
positive root $\beta.$ If $y=1,$ relation (\ref{e:gconj}) shows that
$g_y=0,$ so assume $y\ne 1.$  The automorphism $\kappa$ must also satisfy
$\kappa(\omega_1)\kappa(\om_2)=\kappa(\om_2)\kappa(\om_1)$ for all
$\omega_1,\omega_2\in V_\bC.$ This implies that 
\begin{equation}\label{e:om1om2}
g_y(\omega_2)(\omega_1-y^{-1}(\omega_1))=g_y(\omega_1)(\omega_2-y^{-1}(\omega_2)),\text{
  for all }y\in W,\ \omega_1,\omega_2\in V_\bC.
\end{equation} 
If $\la_1,\la_2$ are eigenvalues of $y^{-1},$ then for
$\om_1\in V_{\la_1}\ \om_2\in V_{\la_2},$
\begin{equation}
  \label{eq:relation}
g_y(\om_1)(1-\la_2)\om_2=g_y(\om_2)(1-\la_1)\om_1.
\end{equation}
Set $\la_1=1.$ Then
\[
g_y(\om_1)(1-\la_2)\om_2=0 \text{ for any } \om_2\in V_{\la_2}.
\]  
Because $y^{-1}\ne 1,$ it has an eigenvalue $\la_2\ne 1,$ so $g_y$ is
0 on the $1-$eigenspace of $y^{-1}.$ Similarly, relation
(\ref{eq:relation}) implies that if $\la\ne 1,$ any 
$\om_1,\om_2\in V_\la$ must be multiples of each other. So $\dim
V_\la\le 1$ for any $\la\ne 1.$ 

Because $y$ is an automorphism of the real space
$V,$ if $\la$ is an eigenvalue, so is $\ovl\la.$  From relation
(\ref{eq:relation}), we see that 
unless $\la=\ovl\la,$ $g_y=0$ on these
eigenspaces. The only remaining case, when $g_y\ne 0$, is when  $y^{-1}$ has
eigenvalues $\pm 1,$ and the $-1-$eigenspace has dimension $1$. It
follows that $g_y=0$ unless $y=s_\beta$ for a root $\beta.$ 

\medskip
We specialize $y=s_\beta$, for $\beta\in R^+.$ Then
\[
g_{s_\beta}(\omega_2)(\omega_1,\beta^\vee)\beta=
g_{s_\beta}(\omega_1)(\omega_2,\beta^\vee)\beta,\ \omega_1,\omega_2\in V_\bC,
\]
and therefore $g_{s_\beta}(\omega)=c_\beta(\omega,\beta^\vee),$ for
some $c_\beta\in\bC.$ When $\beta=\al\in\Pi,$ (\ref{e:g_al}) with
$c_0=-1$, implies that $c_\al=k_\al$. If $\beta$ is not a simple root,
we can use (\ref{e:gconj}) inductively to check that
$c_\beta=k_\beta.$ 
 
\end{proof}

\begin{remark}
There may be many more (up to inner conjugation) filtered
automorphisms $\kappa$ that preserve, but are not the identity on
$W$. Every filtered automorphism $\kappa$ {induces}  an
automorphism of $\bC[W]$, so a first question would be to classify the
group of outer automorphisms of $\bC[W]$, a subgroup of which is
$\mathsf{Out}(W)$, and this can be nontrivial (e.g., when $W=S_6$,
$\mathsf{Out}(S_6)=\bZ/2\bZ$). But if we require that $\kappa$
preserves the root reflections, then $\kappa$ is obtained from one of the two
automorphisms in Proposition \ref{p:classification} by composition
with an automorphism of $\bH$ coming from the root system.    
\end{remark}

{}

\section{Relation between signatures}
{In this section, we discuss the relation between the
  signature characters for $\star$ and $\bullet$ of simple hermitian $\bH$-modules}.
\subsection{}
Let $\bH'=\bH\rtimes\langle\delta\rangle$ be the extended graded Hecke
algebra and $(\pi,X)$ a module for $\bH'$. Then, $X$ has a
$\bullet$-invariant form if and only if it has a $\star$-invariant
form, see \cite[Lemma 3.1.1]{BC4} and the relation between the forms
is 
\begin{equation}
\langle x,y\rangle_\star=\langle x,\pi(t_{w_0}\delta) y\rangle_\bullet.
\end{equation}
For example, this applies to the case when $X$ is a simple
$\bH$-module with real central character. In that case, let
$(\mu,U_\mu)$ be a lowest $W$-type of $X$, and extend $X$ to a
$\bH'$-module, as we may, by normalizing the action of $\delta$ so
that $\pi(t_{w_0}\delta)$ acts on $\mu$ by the identity. (Since
$t_{w_0}\delta$ is central in $W'=W\rtimes\langle\delta\rangle$, a
priori, it acts on $\mu$ by $\pm\Id$ depending how $\mu$ is extended
to a $W'$-type.) 

Define the elements
\begin{equation}
\wti\om=\frac 12(\om-\om^\star)=\om-\frac
12\sum_{\beta>0}\Delta_\beta(\om) t_{s_\beta},\quad\om\in V. 
\end{equation}
These elements satisfy:

\begin{enumerate}
\item $\wti\om^\star=-\wti\om$, $\wti\om^\bullet=\wti\om$;
\item $t_w\wti\om t_{w^{-1}}=\wti{w(\om)}$;
\item $[\wti\om_1,\wti\om_2]=-[T_{\om_1},T_{\om_2}]\in \bC[W]$, where $T_\om=\frac 12\sum_{\beta>0}\Delta_\beta(\om) t_{s_\beta}.$
\end{enumerate}

Define the following increasing filtration on $X$:
\begin{equation}
\C F^kX=\text{span}\{\pi(\wti\om_1\cdot\dots\cdot\wti\om_j)u: u\in
U_\mu,\ \om_i\in V,\ j\le k\},\quad k\ge 0.  
\end{equation}
This filtration is $W'$-invariant by (2) above, and obviously finite
if $X$ is finite dimensional. It depends on the chosen lowest $W$-type
$\mu$. One can define this filtration by starting with any
$W$-invariant subspace of $X$ in degree $0$, for example, by replacing
$V_\mu$ with the sum of lowest $W$-types. 

Let $\oplus\overline{\C F}^kX$ be the associated graded object, each
$\overline {\C F}^kX$ is a $W$-module. 
Since
\[\pi(t_{w_0}\delta)\pi(\wti\om_1\cdot\dots\cdot\wti\om_j)u=(-1)^k\pi(\wti\om_1\cdot\dots\cdot\wti\om_j)\pi(t_{w_0}\delta)u=(-1)^k\pi(\wti\om_1\cdot\dots\cdot\wti\om_j)u, 
\]
$t_{w_0}\delta$ acts by $(-1)^k$ on $\overline {\C F}^kX$. This means
that $\langle \overline{\C F}^kX,\overline{\C F}^\ell
X\rangle_\bullet=0$ if $k\not\equiv\ell$ (mod $2$) and moreover, we
have the following relation. 
\begin{lemma} If $x\in X$, let $k(x)$ be the integer such that 
{$0\ne\ovl{x}\in \overline{\C F}^{k(x)}X$}. Then
$$
\langle x, y\rangle_\star=\begin{cases}\langle x,y\rangle_\bullet=0,
  &\text{if } k(x)\not\equiv k(y)\text{ (mod $2$),}\\ 
(-1)^{k(x)}\langle x,y\rangle_\bullet, &\text{if } k(x)\equiv
k(y)\text{ (mod $2$).} \end{cases}
$$ 
\end{lemma}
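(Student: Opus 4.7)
The plan is to exploit the relation $\langle x,y\rangle_\star = \langle x, \pi(t_{w_0}\delta) y\rangle_\bullet$ recorded at the start of this section, together with the eigenspace structure of $\pi(t_{w_0}\delta)$ on $X$. Three facts are immediate: $(t_{w_0}\delta)^2=1$, so $\pi(t_{w_0}\delta)$ is an involution; the filtration $\C F^\bullet X$ is $W'$-stable, so $\pi(t_{w_0}\delta)$ preserves each $\C F^kX$; and by the displayed identity just above the lemma, $\pi(t_{w_0}\delta)$ acts by the scalar $(-1)^k$ on each graded piece $\overline{\C F}^kX$. Hence $X = X^+\oplus X^-$ splits into $\pm 1$-eigenspaces of $\pi(t_{w_0}\delta)$, and for any eigenvector $x\in X^\epsilon$ the image $\ovl{x}\in\overline{\C F}^{k(x)}X$ forces $\epsilon=(-1)^{k(x)}$.

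Next I would check that $\pi(t_{w_0}\delta)$ is self-adjoint with respect to $\langle\cdot,\cdot\rangle_\bullet$. This follows from the defining relation combined with the hermitian symmetry of both forms:
\[
\langle\pi(t_{w_0}\delta)x,y\rangle_\bullet = \overline{\langle y, \pi(t_{w_0}\delta)x\rangle_\bullet} = \overline{\langle y,x\rangle_\star} = \langle x,y\rangle_\star = \langle x,\pi(t_{w_0}\delta)y\rangle_\bullet.
\]
Consequently $X^+$ and $X^-$ are orthogonal under $\langle\cdot,\cdot\rangle_\bullet$, and each is preserved by $\pi(t_{w_0}\delta)$.

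The lemma now reduces to a case check on $\pi(t_{w_0}\delta)$-eigenvectors $x,y$, for which the parity of $k$ coincides with the eigenvalue. If $k(x)\not\equiv k(y)\pmod 2$, then $x$ and $y$ sit in opposite eigenspaces, so $\langle x,y\rangle_\bullet=0$ by the orthogonality just proved, and $\langle x,y\rangle_\star=\langle x,\pi(t_{w_0}\delta)y\rangle_\bullet=\pm\langle x,y\rangle_\bullet=0$ as well. If $k(x)\equiv k(y)\pmod 2$, then $\pi(t_{w_0}\delta)y=(-1)^{k(y)}y=(-1)^{k(x)}y$, and the defining relation directly yields the factor $(-1)^{k(x)}$.

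The step I expect to be most delicate is reading the statement correctly for general $x\in X$, since a priori $x$ need not be an eigenvector of $\pi(t_{w_0}\delta)$. Because $\pi(t_{w_0}\delta)$ preserves the filtration, any $x$ decomposes uniquely as $x=x^++x^-$ with $x^\pm\in X^\pm\cap\C F^{k(x)}X$, and similarly for $y$; the $\bullet$-form becomes block-diagonal with respect to the eigenspace splitting $X = \bigoplus_k G^k$ where $G^k\subset X^{(-1)^k}\cap\C F^kX$ is a $W$-stable lift of $\overline{\C F}^kX$. Applying the eigenvector case on each block, the cross-parity pieces drop out by orthogonality and the claimed identity assembles; this bookkeeping between the eigenspace decomposition and the filtration is the only nontrivial ingredient, after which the result is formal.
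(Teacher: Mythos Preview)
Your core argument is correct and coincides with the paper's: the paper records (just before the lemma) that the spanning vectors $\pi(\wti\om_1\cdots\wti\om_j)u$ are $\pi(t_{w_0}\delta)$-eigenvectors with eigenvalue $(-1)^j$, and then the orthogonality and the sign follow immediately from the relation $\langle x,y\rangle_\star=\langle x,\pi(t_{w_0}\delta)y\rangle_\bullet$. Your clean derivation of the self-adjointness of $\pi(t_{w_0}\delta)$ for $\langle\cdot,\cdot\rangle_\bullet$ from the hermitian symmetry of both forms is a nice addition that the paper leaves implicit.

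The problem is your final paragraph. Your instinct that the ``general $x$'' case is delicate is right, but the resolution you propose is wrong on two counts. First, the $\bullet$-form is \emph{not} block-diagonal with respect to a splitting $X=\bigoplus_k G^k$ into lifts of the graded pieces; it is only block-diagonal for the two-block decomposition $X=X^+\oplus X^-$. Second, and more seriously, the lemma as literally stated for arbitrary $x\in X$ is false, so no bookkeeping can salvage it. For instance, take $u\in U_\mu$ with $\langle u,u\rangle_\bullet\neq 0$, and set $x=u+\pi(\wti\om)u$ (so $k(x)=1$, assuming $\pi(\wti\om)u\notin U_\mu$) and $y=u$ (so $k(y)=0$). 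Then $k(x)\not\equiv k(y)$ mod $2$, yet
\[
\langle x,y\rangle_\bullet=\langle u,u\rangle_\bullet+\langle \pi(\wti\om)u,u\rangle_\bullet=\langle u,u\rangle_\bullet\neq 0,
\]
since the cross term vanishes by the eigenspace orthogonality you established.

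The correct reading, and the one the paper intends, is that the lemma is asserted for the spanning vectors $x=\pi(\wti\om_1\cdots\wti\om_{k(x)})u$ themselves (equivalently, for $x$ lying in the $(-1)^{k(x)}$-eigenspace of $\pi(t_{w_0}\delta)$). This is exactly what the displayed identity before the lemma sets up, and it is why the paper passes in the next subsection to $\overline X_0$ and $\overline X_1$, which are the $\pm 1$-eigenspaces. Your eigenvector argument already proves this version; simply drop the last paragraph and note the intended scope.
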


\subsection{}\label{sec:3.2}Denote 
\begin{equation}
\overline X_0=\sum_{k\text{ even}} \overline{\C
  F}^kX \text{ and }\overline X_1=\sum_{k\text{ odd}} \overline{\C
  F}^kX.
\end{equation}
 Notice that $\overline X_0$ and $\overline X_1$ are the $+1$
and $-1$ eigenspaces of $t_{w_0}\delta$ in $X$, so they do not depend
on the chosen filtration. The previous lemma implies that a necessary
condition for the module $X$ to be $\star$-unitary is that the
$\bullet$-form be positive definite on $\overline X_0$ and 
negative definite on $\overline X_1.$

 Let $\bH_\ev$ be the subalgebra of $\bH$ generated by $W$ and
 $\{\wti\om_1\wti\om_2:\om_1,\om_2\in V\}$ and
 $\bH'_\ev=\bH_\ev\rtimes\langle\delta\rangle$. Then $\overline X_0$
 and $\overline X_1$ are both $\bH'_\ev$-modules, and with the
 inherited $\bullet$-form, they are $\bullet$-unitarizable
 $\bH'_\ev$-modules. Notice also that if $X$ is a simple
 $\bH'$-module, then $\overline X_0$ and $\overline X_1$ are simple
 $\bH'_\ev$-modules.  

 In conclusion:

\begin{lemma}
A necessary condition for the simple $\bH'$-module $X$ to be
$\star$-unitary is that $\overline X_0$ and $\overline X_1$ be
$\bullet$-unitarizable simple $\bH'_\ev$-modules (or zero). 
\end{lemma}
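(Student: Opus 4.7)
The plan is to exploit the preceding lemma (the sign relation on $\C F^\bullet X$) together with the decomposition $X=\ovl X_0\oplus \ovl X_1$ into $\pm 1$-eigenspaces of $\pi(t_{w_0}\delta)$. Because $t_{w_0}\delta$ is central in $W'$, it commutes with $W$; and because $\bH_\ev$ is generated by $W$ together with products $\wti\om_1\wti\om_2$ which carry $\ovl X_i$ into $\ovl X_i$ (each $\wti\om$ flips parity, so an even product preserves it), both $\ovl X_0$ and $\ovl X_1$ are $\bH'_\ev$-submodules of $X$.

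Assume $X$ is $\star$-unitary, so $\langle\cdot,\cdot\rangle_\star$ is positive definite. Restricting to $\ovl X_0$ where $\pi(t_{w_0}\delta)=\Id$, the basic relation $\langle x,y\rangle_\star=\langle x,\pi(t_{w_0}\delta)y\rangle_\bullet$ gives $\langle\cdot,\cdot\rangle_\bullet=\langle\cdot,\cdot\rangle_\star$, so the $\bullet$-form is positive definite on $\ovl X_0$; this exhibits $\ovl X_0$ as $\bullet$-unitary. On $\ovl X_1$, where $\pi(t_{w_0}\delta)=-\Id$, the same relation gives $\langle\cdot,\cdot\rangle_\bullet=-\langle\cdot,\cdot\rangle_\star$, which is negative definite. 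Replacing this form by its negative yields a positive definite form that is still $\bullet$-invariant (negation preserves invariance under any involutive anti-automorphism), so $\ovl X_1$ is $\bullet$-unitarizable as well.

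It remains to verify the simplicity assertion. Decompose $\bH=\bH_\ev\oplus \bH_\od$, where $\bH_\od$ is the $\bH_\ev$-bimodule spanned by single $\wti\om$-factors; then $\bH_\od\cdot\ovl X_i\subset \ovl X_{1-i}$ and $\bH_\ev\cdot\ovl X_i\subset \ovl X_i$, and similarly $\delta$ preserves the parity decomposition since it commutes with $t_{w_0}\delta$. Now suppose $Y\subset \ovl X_0$ is a nonzero proper $\bH'_\ev$-submodule. The $\bH'$-submodule generated by $Y$ is $Y+\bH_\od\cdot Y$, whose even part is exactly $Y\subsetneq \ovl X_0$; this is a proper nonzero $\bH'$-submodule of $X$, contradicting simplicity of $X$. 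Hence $\ovl X_0$ (and, by the same argument, $\ovl X_1$) is either zero or a simple $\bH'_\ev$-module.

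The most delicate point is the last step: it requires that $\bH'$ actually decomposes $X$ according to the $\mathbb Z/2$-grading by parity of $\pi(t_{w_0}\delta)$-eigenvalue, and that the odd part $\bH_\od$ genuinely exchanges the two eigenspaces without mixing them. Both facts follow from the centrality of $t_{w_0}\delta$ in $W'$ and the parity behavior of the generators $\wti\om$, but these are the assertions on which the simplicity argument hinges, so verifying them carefully is where the work lies.
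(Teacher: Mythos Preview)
Your proof is correct and follows the same approach as the paper, which treats the lemma as a direct summary (``In conclusion:'') of the preceding discussion rather than giving a separate argument. You have filled in the details the paper leaves implicit---in particular the simplicity claim, which the paper asserts in one line (``Notice also that if $X$ is a simple $\bH'$-module, then $\overline X_0$ and $\overline X_1$ are simple $\bH'_\ev$-modules'') and which your $\bZ/2$-grading argument justifies; the grading is induced by $\Ad(t_{w_0}\delta)$, since $t_{w_0}\delta$ is central in $W'$ and conjugates each $\wti\om$ to $-\wti\om$, so $\bH'_\ev$ really is the even part and your generated-submodule argument goes through.
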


\begin{example}
Let $\bH$ be the graded algebra of type $A_1$ with generators $t$ and $\om$: $t\om+\om t=2$. Then $\wti\om=\om-t$ and $\bH_\ev$ is generated by $t$ and $\om^2$. Since $t\om^2=\om^2t$, the simple $\bH_\ev$-modules are one-dimensional of the form $X(\triv,\lambda)$ or $X(\sgn,\lambda)$, where the restriction to $W$ is $\triv$ or $\sgn$, respectively, and $\om^2$ acts by $\lambda.$ Suppose $\lambda$ is real and let $x_\lambda$ be a generator of such a module $X$. Then we can define a positive definite $\bullet$-invariant form on $X$ by setting $\langle x_\lambda,x_\lambda\rangle_\bullet=1$.  
\end{example}

\begin{example}
Suppose $(\pi,X)$ is a simple tempered $\bH$-module with real central
character. Let $\fg$ be the complex Lie algebra attached to the root
system and $G=\Ad\fg$.  By \cite{KL,L2}, there exists a nilpotent
element $e\in \fg$ and $\psi\in\widehat {A_G(e)}$ of Springer type
such that $$X|_W= H^*(\C B_e)^\psi=\sum_{i=0}^{d_e} H^{2i}(\C
B_e)^\psi.$$ To emphasize the connection write $X(e,\psi)$ for
$X$. Then $X(e,\psi)$ has a unique lowest $W$-type, namely
$\mu(e,\psi)= H^{2d_e}(\C B_e)^\psi$, and we define the filtration
accordingly. One can define an action of $\delta$ on $H^*(\C
B_e)^\psi$ (see \cite[section 4]{CH} or \cite{BeMi}), which makes
$H^*(\C B_e)^\psi$ into a $W'$-module and 
\begin{equation}
\tr(ww_0\delta,H^{2i}(\CB_e)^\psi)=(-1)^i\sgn(w_0)\tr(w,H^{2i}(\CB_e)^\psi).
\end{equation}
Moreover, this action is compatible with the $\bH$-action on
$X(e,\psi)$ \cite[section 6.4]{CH}, making $X(e,\psi)$ into an
$\bH'$-module. Thus, once we normalized the action so that $\delta$
acts by $\Id$ on $\mu(e,\psi)$, we have 
\begin{equation}
\overline X_0=\sum_{0\le i\le d_e,~i\equiv d_e \text{ (mod 2)}}
H^{2i}(\C B_e)^\psi\text{ and }\overline X_1=\sum_{0\le i\le
  d_e,~i\not\equiv d_e \text{ (mod 2)}} H^{2i}(\C B_e)^\psi. 
\end{equation}
\end{example}

\begin{example}\label{one-W-type}
If $\overline X_0=X$ then $X$ must be a one-$W$-type in the sense of
\cite{BM}. The one-$W$-type modules are the only simple $\bH$-modules
with real central character which are unitary with respect to both
$\bullet$ and $\star$ operations. This follows from an argument which
is essentially in \cite[Proposition 2.3]{BM}, see  \cite[Proposition
3.1.1]{CM}. 
\end{example}

\section{Invariant forms on spherical principal series}
\label{sec:dirac} 

\subsection{Spherical principal series} In this section, we define
$\star$- and $\bullet$-invariant
hermitian forms on spherical principal series $\bH$-modules (when such forms exist). 

\smallskip

Every element $h\in\bH$ can be written uniquely as $h=\sum_{w\in W}t_w
a_w,$ $a_w\in S(V_\bC).$ Define the $\bC$-linear map
$$\ep_A:\bH\to S(V_\bC),\quad \ep_A(h)=a_1.$$ 
If $\nu\in V_\bC^\vee$, let $\bC_\nu$ denote the character of
$S(V_\bC)$ given by evaluation at $\nu$. For $a\in \bH$, denote by
$a(\nu)$ the evaluation of $a$ at $\nu.$  The spherical principal series
with parameter $\nu$ is $$X(\nu)=\bH\otimes_{S(V_\bC)} \bC_\nu.$$

If $\kappa$ is any conjugate linear anti-involution of $\bH$, and $L,R$
are arbitrary elements of $\bH$, and $\nu'\in V_\bC^\vee$,  the assignment
\begin{equation}
\langle h_1,h_2\rangle_{L,R}=\epsilon_A(L \kappa(h_2)h_1 R)(\nu'),\quad
h_1,h_2\in\bH,  
\end{equation}
defines a $\kappa$-invariant (not necessarily hermitian) pairing on
$\bH$ viewed as an $\bH$-module under left multiplication. We will
omit the subscript $L,R$ from the notation. For such a
form to descend to a $\kappa$-invariant hermitian form on $X(\nu)$, it
must satisfy: 
\begin{enumerate}
\item[(H1)] $\langle h_1 a,h_2\rangle= a(\nu)\langle h_1,h_2\rangle$,
  for all $a\in S(V_\bC)$; 
\item[(H2)] $\langle h_1 ,h_2 a\rangle= \overline{a(\nu)}\langle
  h_1,h_2\rangle$, for all $a\in S(V_\bC)$; 
\item[(H3)] $\langle h_1,h_2\rangle=\overline{\langle h_2,h_1\rangle}.$
\end{enumerate} 
Of course, (H1) and (H3) imply (H2), but in practice it will be
convenient for us to check (1) and (2) first, which will then reduce
the verification of (3) on the basis $\{t_w\in W\}$ of $X(\nu)$.

\smallskip
For every $s_\al\in W,$ $\al\in\Pi$, define 
\begin{equation}
R_{s_\al}=(t_{s_\al}\al-k_\al)(\al-k_\al)^{-1}.
\end{equation}
As it is well known, the elements $R_{s_\al}$ satisfy the braid
relations, therefore one can define $R_x$, $x\in W$, as a product,
using a reduced expression of $x$. The main property of $R_x$ is that
\begin{equation}\label{e:aR}
a\cdot R_x= R_x\cdot x^{-1}(a),\text{ for all }x\in W,\ a\in S(V_\bC).
\end{equation}

{We show (H1)-(H3) for $\kappa=\bullet$ and the pairing
\begin{equation} \label{e:bullet-minimal} 
\langle h_1,h_2\rangle_\bullet:=\ep_A(t_{w_0}h_2^\bullet h_1R_{w_0})(w_0\nu).
\end{equation}  }
Let { $$\C R_\al:=(t_\al\al-{k_\al})({k_\al+\al})^{-1},$$
and for $x=s_{\al_1}\dots s_{\al_k},$ define $\C R_x=\prod\C
R_{\al_i}.$ The $\C R_x$ have the same commutation properties as the
$R_x,$ and
\begin{equation}
  \label{eq:cbullet}
\C R_x^\bullet=(-1)^{\ell(x)}\C
R_{x^{-1}}\prod_{x^{-1}\al<0}\frac{k_\al+\al}{k_\al -\al}.  
\end{equation}
Let
\[
V_{\reg}^\vee:=\{\nu\in V_\bC^\vee\ :\ (\al,\nu)\ne 0 \text{ for any
} \al\in R^+ \}.  
\] 
For $\nu\in V_{\reg}^\vee,$} a basis of $X(\nu)$ is given by 
\begin{equation}
  \label{eq:basis}
\{\C R_x\otimes \one_\nu\}_{x\in W}.  
\end{equation}
Notice that $\C R_x$ is not in $\bH$, but in $\hat\bH.$ However it
makes sense to express 
$\C R_x=\sum t_y a^x_y$ with $a_y^x\in \CO(V_\bC),$ and
then evaluate at $\nu.$ The fact that $\nu\in V_{\reg}^\vee$ allows one
to solve for the $t_x\otimes\one_\nu$ in terms of the $\C R_x\otimes
\one_\nu$; so indeed (\ref{eq:basis}) is a basis. ({Note 
  that we have assumed that $k_\al >0.$})

\begin{lemma}\label{l:A-weights}
The vector $\C R_x\otimes \one_\nu$ is  an
$\bA$-weight vector of $X(\nu)$ with weight $x\nu$.
\end{lemma}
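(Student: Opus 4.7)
The approach is to reduce the weight computation to the intertwining property of the $\mathcal{R}_x$ elements. Specifically, the plan is first to establish that for every $a \in S(V_\bC)$ and $x \in W$,
$$a \cdot \mathcal{R}_x = \mathcal{R}_x \cdot x^{-1}(a).$$
The text already asserts that the $\mathcal{R}_x$ share the same commutation properties as the $R_x$, for which the analogous identity $a R_x = R_x \cdot x^{-1}(a)$ is recorded in (\ref{e:aR}); so it suffices to verify the identity for the simple generators $\mathcal{R}_{s_\al}$. For $\beta \in V_\bC$, the basic commutation relation $\beta t_{s_\al} = t_{s_\al} s_\al(\beta) + k_\al (\beta,\al^\vee)$ gives, after a short computation,
$$\beta (t_{s_\al}\al - k_\al) = (t_{s_\al}\al - k_\al)\, s_\al(\beta),$$
and then multiplying on the right by $(k_\al + \al)^{-1}$ (which lies in a commutative subalgebra containing $s_\al(\beta)$) yields $\beta \mathcal{R}_{s_\al} = \mathcal{R}_{s_\al} s_\al(\beta)$. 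The general case follows by induction on $\ell(x)$ using a reduced expression, with the braid relations among the $\mathcal{R}_{s_\al}$ ensuring that $\mathcal{R}_x$ is well defined.

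Granted the intertwining property, the lemma is immediate. For any $a \in S(V_\bC)$,
$$a \cdot (\mathcal{R}_x \otimes \one_\nu) \;=\; \mathcal{R}_x \cdot x^{-1}(a) \otimes \one_\nu \;=\; (x^{-1}(a))(\nu)\cdot (\mathcal{R}_x \otimes \one_\nu),$$
where the second equality uses that $S(V_\bC)$ acts on $\bC_\nu$ by evaluation at $\nu$. Finally, the $W$-equivariance of the bilinear pairing $(\cdot,\cdot): V_\bC \times V_\bC^\vee \to \bC$ yields $(x^{-1}a)(\nu) = a(x\nu)$ for $a \in V_\bC$, and this extends multiplicatively to all of $S(V_\bC)$. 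Hence $a\cdot (\mathcal{R}_x \otimes \one_\nu) = a(x\nu)\,(\mathcal{R}_x \otimes \one_\nu)$, which is exactly the statement that $\mathcal{R}_x \otimes \one_\nu$ is an $\bA$-weight vector of weight $x\nu$.

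The only subtlety to note is that $\mathcal{R}_x$ itself is not in $\bH$ but in a localization; this is addressed by the preceding discussion, where it is observed that for $\nu \in V_{\reg}^\vee$ the element $\mathcal{R}_x \otimes \one_\nu$ makes sense via the expansion $\mathcal{R}_x = \sum_y t_y a^x_y$ with $a^x_y \in \CO(V_\bC)$ regular at $\nu$, and moreover the $\mathcal{R}_x \otimes \one_\nu$ form a basis of $X(\nu)$, so in particular they are nonzero. There is no serious obstacle: the whole argument rests on the one simple-intertwiner computation above, together with the duality $(x^{-1}a,\nu) = (a, x\nu)$.
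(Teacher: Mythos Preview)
Your proof is correct and follows essentially the same approach as the paper. The paper's proof is a single sentence: it invokes the commutation $a\cdot \mathcal{R}_x=\mathcal{R}_x\cdot x^{-1}(a)$ (already asserted just before the lemma as ``the $\mathcal{R}_x$ have the same commutation properties as the $R_x$'') and concludes $a\cdot(\mathcal{R}_x\otimes\one_\nu)=a(x\nu)(\mathcal{R}_x\otimes\one_\nu)$; you supply the extra detail of checking this intertwining property on the simple generators, which is a welcome but not essential elaboration.
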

\begin{proof}
Since $a\cdot \C R_x=\C R_x \cdot x^{-1}(a)$, $a\in
S(V_\bC)$, it
follows that in $X(\nu),$ $a\cdot (\C R_x\otimes \one_\nu)=a(x\nu) (\C
R_x\otimes\one_\nu).$ 
\end{proof}
 
\smallskip

We show that (H1)-(H3) hold for (\ref{eq:basis}) and
$\nu\in V_{\reg}^\vee.$ Since the relations (and the change of basis matrices to
the $t_x$) are rational in $\nu,$ and
$V_{\reg}^\vee$ contains an open set in $V_\bC^\vee,$ they will hold in general.
 
{The first identity holds by (\ref{e:aR}):
\begin{equation*}
\langle h_1a,h_2\rangle_\bullet=\langle h_1,h_2\rangle_\bullet a(\nu).
\end{equation*}
For the second identity,
\begin{equation*}
\langle \C R_x,\C R_ya\rangle_\bullet=\langle \C R_x,\C R_y\rangle_\bullet
(w_0x^{-1}y)(a^\bullet)(w_0\nu)=\langle \C R_x,\C R_y\rangle_\bullet
(x^{-1}y)(a^\bullet)(\nu).
\end{equation*} 
Suppose $x=y$. Then this formula implies (H2) (with $h_1=h_2=\C R_x$) if
and only if $a^\bullet(\nu)=\overline{a(\nu)}$ which is equivalent
to $\nu=\overline\nu,$ i.e., $\nu\in V^\vee.$ 

Suppose $x\neq y.$ We show that each of the two sides of (H2) are zero
because $\ep_A(t_{w_0}\C R_zR_{w_0})=0$ unless $z=1$: 
\[
\begin{aligned}
&\ep_A\left(t_{w_0}(\C R_y a)^\bullet\C R_xR_{w_0}\right)=\ep_A\left(t_{w_0}a(-1)^{\ell
(y)}\C R_{y^{-1}}\prod_{y^{-1}\al<0}\frac{k_\al
  +\al}{k_\al -\al} \C R_xR_{w_0}\right)=\\
&=\ep_A\left(t_{w_0}\C R_{y^{-1}x}R_{w_0}\right)\cdot(-1)^{\ell (y)}(w_0x^{-1}y)(a)\prod_{y^{-1}\al<0}\frac{k_\al
  +w_0x^{-1}\al}{k_\al-w_0x^{-1}\al}=0, \text{ and}\\
&\ep_A\left(t_{w_0}\C R_y^\bullet\C R_xR_{w_0}\right)
a=\ep_A\left(t_{w_0}\C R_{y^{-1}x}R_{w_0}\right)\cdot (-1)^{\ell(x)} \prod_{y^{-1}\al<0}\frac{k_\al
  +w_0x^{-1}\al}{k_\al -w_0x^{-1}\al}a=0.
\end{aligned}
\]
So (H2) is verified.

We also record the
formula
\begin{equation}
  \label{eq:bprod}
\begin{aligned}
\langle\C R_x\otimes\one_\nu,\C
R_x\otimes\one_\nu\rangle_\bullet&=(-1)^{\ell(x)}\left(\prod_{\al>0}\frac{\al}{k_\al  -\al}\cdot
\prod_{x^{-1}\al<0}\frac{k_\al
  -\delta(x^{-1}\al)}{k_\al +\delta(x^{-1}\al)}\right) \left(w_0\nu\right) \\
&=(-1)^{|R|}\prod_{\al>0}\frac{\langle\al,\nu\rangle}{\langle\al,\nu\rangle+k_\al}\prod_{x\al<0}\frac{\langle\al,\nu\rangle-k_\al}{\langle\al,\nu\rangle+k_\al}.
\end{aligned}
\end{equation}
The equivalence of the two formulas can be easily seen by the substitution $x^{-1}\al\mapsto\al$ in the second product. Notice that the factor $(-1)^{|R|}\prod_{\al>0}\frac{\langle\al,\nu\rangle}{\langle\al,\nu\rangle+k_\al}$ is independent of $x$, so we may divide the form uniformly by it. The resulting normalized hermitian form has the property that $$\langle \C R_1\otimes \one_\nu,\C R_1\otimes\one_\nu\rangle_\bullet=1.$$

When $\nu$ is dominant, $k_\al +\langle\al,\nu\rangle>0$, so the denominator does not vanish, and it is always
positive (we have assumed $k_\al >0$).

} 

\smallskip

The arguments also imply that $\langle h_2 ,h_1\rangle_\bullet =\ovl{\langle
  h_1,h_2\rangle_\bullet}$ for {
  $h_1,h_2\in\{R_x\otimes\one_\nu\}_{x\in W}$, so also in general. In
  conclusion, we have proved the following result.
\begin{proposition}\label{p:bullet-form-minimal}
The form 
$$\langle h_1,h_2\rangle_\bullet:=\ep_A(t_{w_0}h_2^\bullet h_1R_{w_0})(w_0\nu)$$
defines a $\bullet$-invariant
hermitian form on $X(\nu)$ if and only if $\overline\nu=\nu$, i.e.,
$\nu\in V^\vee.$
\end{proposition}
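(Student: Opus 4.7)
The plan is to verify conditions (H1)--(H3) listed just before the proposition, and then observe that the necessity of $\bar\nu=\nu$ is forced by (H2)--(H3). The $\bullet$-invariance of the pairing is built into its definition through the factor $h_2^\bullet$, so the only content is hermitian symmetry and the correct behavior under right multiplication by $S(V_\bC)$. Throughout, I would work with a regular parameter $\nu\in V^\vee_{\reg}$ so that $\{\mathcal{R}_x\otimes \mathbf 1_\nu\}_{x\in W}$ is a basis of $X(\nu)$, and then invoke rationality of all formulas in $\nu$ to pass to general $\nu$.

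First, to verify (H1), I would note that since $a\in S(V_\bC)$ commutes past $R_{w_0}$ via $a R_{w_0} = R_{w_0}\cdot w_0^{-1}(a)$, and since $\epsilon_A$ is right $S(V_\bC)$-linear (because $\bH\cong\bC[W]\otimes S(V_\bC)$ as right $S(V_\bC)$-modules), one gets
\[
\langle h_1 a, h_2\rangle_\bullet = \epsilon_A(t_{w_0} h_2^\bullet h_1 R_{w_0})(w_0\nu)\cdot w_0^{-1}(a)(w_0\nu) = \langle h_1,h_2\rangle_\bullet\cdot a(\nu).
\]

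Next, for (H2), I would use Lemma~\ref{l:A-weights} (the $\mathcal{R}_x\otimes \mathbf 1_\nu$ are $\bA$-weight vectors of weight $x\nu$) together with the identity (\ref{eq:cbullet}) for $\mathcal{R}_x^\bullet$ to compute both $\langle \mathcal{R}_x, \mathcal{R}_y a\rangle_\bullet$ and $\overline{a(\nu)}\langle \mathcal{R}_x,\mathcal{R}_y\rangle_\bullet$ on the basis. The key observation is that $\epsilon_A(t_{w_0}\mathcal{R}_z R_{w_0})$ vanishes unless $z=1$, which I would prove by writing $\mathcal{R}_z = t_z + (\text{lower terms in the Bruhat-like expansion})$ and using that $R_{w_0}$ has leading term $t_{w_0}$; the identity component $\epsilon_A$ then picks out only the contribution where the $W$-factors multiply to $1$. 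For $x\neq y$ this forces both sides to vanish; for $x=y$, matching reduces to the identity
\[
a^\bullet(\nu)=\overline{a(\nu)}, \qquad a\in S(V_\bC).
\]
Taking $a=\omega\in V_\bC$ and using $\omega^\bullet=\bar\omega$ together with $\overline{(\omega,\nu)}=(\bar\omega,\bar\nu)$, this is equivalent to $(\bar\omega,\nu)=(\bar\omega,\bar\nu)$ for all $\omega$, i.e., $\nu=\bar\nu$.

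For (H3), it suffices to check hermitian symmetry on the basis $\{\mathcal{R}_x\otimes\mathbf 1_\nu\}$. The off-diagonal entries vanish on both sides by the same $\epsilon_A$-vanishing as above. For the diagonal entries, I would compute $\langle \mathcal{R}_x\otimes \mathbf 1_\nu,\mathcal{R}_x\otimes \mathbf 1_\nu\rangle_\bullet$ explicitly (the formula appears in (\ref{eq:bprod})): under the assumption $\nu\in V^\vee$ this takes a real value, establishing $\langle \mathcal{R}_x,\mathcal{R}_x\rangle_\bullet\in\bR$ and hence (H3) on the regular locus. Rationality in $\nu$ of the change of basis from $\{\mathcal R_x\otimes \mathbf 1_\nu\}$ to $\{t_x\otimes \mathbf 1_\nu\}$ then extends all three properties to arbitrary $\nu$ with $\bar\nu=\nu$, while the failure of (H2) for $\bar\nu\neq\nu$ (already on $\mathcal R_1\otimes \mathbf 1_\nu$) gives the converse.

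The main obstacle I foresee is the combinatorial vanishing statement $\epsilon_A(t_{w_0}\mathcal{R}_z R_{w_0})=0$ for $z\neq 1$; once this is in hand, everything else is a direct computation in the regular principal series. One could avoid this by instead using $\omega$-generators of $\bH$ and checking (H1), (H2) for $a\in V_\bC$ only, which reduces (H3) to a statement about the pairing on the $W$-subspace $\bC[W]\otimes \mathbf 1_\nu$ of $X(\nu)$, but the basis argument above seems cleaner for identifying the precise condition $\nu\in V^\vee$.
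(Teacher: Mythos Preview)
Your proposal is correct and follows essentially the same route as the paper: verify (H1) via $aR_{w_0}=R_{w_0}w_0(a)$, check (H2) and (H3) on the basis $\{\mathcal R_x\otimes\mathbf 1_\nu\}$ for regular $\nu$ using the identity (\ref{eq:cbullet}) and the vanishing $\epsilon_A(t_{w_0}\mathcal R_z R_{w_0})=0$ for $z\neq 1$, then extend by rationality in $\nu$; the paper does exactly this, and in fact simply asserts the vanishing without proof.

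One comment on your sketch for that vanishing: the phrase ``$\epsilon_A$ picks out only the contribution where the $W$-factors multiply to $1$'' is not quite an argument, since lower Bruhat terms in $\mathcal R_z$ and in $R_{w_0}$ could a priori combine to produce a $t_{w_0}$-contribution after commuting the $S(V_\bC)$-coefficients past the $t_w$'s. A clean justification is to work in the module $X(w_0\nu)$: there $R_{w_0}\otimes\mathbf 1_{w_0\nu}$ has $\bA$-weight $\nu$, so $\mathcal R_z R_{w_0}\otimes\mathbf 1_{w_0\nu}$ has weight $z\nu$ and is therefore a scalar multiple of $\mathcal R_{zw_0}\otimes\mathbf 1_{w_0\nu}$; since $\mathcal R_{zw_0}$ is Bruhat-supported on $\{y\le zw_0\}$ and $w_0\le zw_0$ forces $z=1$, the $t_{w_0}$-coefficient vanishes for $z\neq 1$.
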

The case of $\star$ follows by formal manipulations. Set 
\begin{equation}\label{e:star-minimal}
\langle h_1, h_2\rangle_\star=\ep_A\left(h_2^\star h_1  R_{w_0}\right)(w_0\nu).
\end{equation}
The relation between the forms is
\begin{equation}\label{e:relation-stars}
\begin{aligned}
\langle h_1, h_2\rangle_\star&=\ep_A\left(h_2^\star h_1  R_{w_0}\right)(w_0\nu)=
\ep_A\left(t_{w_0}\delta(h_2)^\bullet t_{w_0}h_1
  R_{w_0}\right)(w_0\nu)\\
&=\langle
t_{w_0}h_1,\delta (h_2)\rangle_\bullet.
\end{aligned}
\end{equation}
%compare with (\ref{star-bullet-2}).}

We also note the following formulas for the signatures.
\begin{proposition} Write $R_{w_0}=\sum_{w\in W} t_w a_w.$
\begin{enumerate}
\item The signature of $\langle\ ,\ \rangle_\bullet$ is given by the
  signature of the 
  matrix $\left\{a_{x^{-1}yw_0}\right\}_{x,y\in W}$.
\item The signature of $\langle\ ,\ \rangle_\star$ is given by the
  signature of the
  matrix $\left\{a_{x^{-1}y}\right\}_{x,y\in W}$.
\end{enumerate}
\end{proposition}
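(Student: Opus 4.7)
The plan is to compute the Gram matrix of each form directly in the basis $\{t_x\otimes \one_\nu\}_{x\in W}$ of $X(\nu)$ and observe that it coincides with the matrix claimed. Since the signature of a hermitian form is the signature of its Gram matrix in any basis, this will give both assertions at once.

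The key algebraic fact, which distinguishes the graded Hecke algebra from the affine Hecke algebra, is that the span of $\{t_w\}_{w\in W}$ in $\bH$ is a copy of the group algebra $\bC[W]$: by Definition \ref{d:graded}(i), the multiplication of these elements has no quadratic corrections, and in particular $t_z t_w = t_{zw}$ for all $z,w\in W$. Writing $R_{w_0}=\sum_{w\in W} t_w a_w$ with $a_w\in S(V_\bC)$, this gives
\[
t_z\cdot R_{w_0}=\sum_{w\in W} t_{zw}\,a_w,
\]
so that $\ep_A(t_z R_{w_0})=a_{z^{-1}}$ for every $z\in W$.

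Applying this to the $\bullet$-form (\ref{e:bullet-minimal}) with $h_1=t_x$, $h_2=t_y$, and using $t_y^\bullet=t_{y^{-1}}$,
\[
\langle t_x\otimes\one_\nu,\,t_y\otimes\one_\nu\rangle_\bullet
=\ep_A\!\left(t_{w_0}\,t_{y^{-1}}\,t_x\,R_{w_0}\right)(w_0\nu)
=\ep_A\!\left(t_{w_0 y^{-1}x}\,R_{w_0}\right)(w_0\nu)
=a_{x^{-1}y w_0}(w_0\nu),
\]
the last equality coming from the observation above with $z=w_0 y^{-1}x$, so that $z^{-1}=x^{-1}y w_0$. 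The analogous computation for the $\star$-form (\ref{e:star-minimal}), now without the prefactor $t_{w_0}$, gives
\[
\langle t_x\otimes\one_\nu,\,t_y\otimes\one_\nu\rangle_\star
=\ep_A\!\left(t_{y^{-1}x}\,R_{w_0}\right)(w_0\nu)
=a_{x^{-1}y}(w_0\nu).
\]
This is the content of (1) and (2); the matrices $\{a_{x^{-1}yw_0}\}$ and $\{a_{x^{-1}y}\}$ in the statement should be interpreted as evaluated at $w_0\nu$.

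There is essentially no obstacle: the computation is a single line once one recognizes that the multiplication of $t$-elements in $\bH$ is simply group multiplication in $W$, so the only thing ``to solve for'' is the group element $w\in W$ that produces $t_1$ in the product $t_{z}R_{w_0}$. The hermiticity of the resulting matrices (for $\nu\in V^\vee$) is not part of the claim but is already ensured by the verification of (H3) in Proposition \ref{p:bullet-form-minimal} and its analogue for $\star$ via (\ref{e:relation-stars}).
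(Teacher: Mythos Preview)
Your proof is correct and is precisely the straightforward computation the paper has in mind; the paper records only the word ``Straightforward'' as its proof. Your observation that $\ep_A(t_z R_{w_0})=a_{z^{-1}}$ (using $t_zt_w=t_{zw}$ and $w_0^{-1}=w_0$) is exactly the content being left to the reader.
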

\begin{proof}
Straightforward.
\end{proof}
\begin{corollary} For every $w\in W,$
  \[
\ep_A\left( t_w R_{w_0}\right)=\ep_A\left(\delta(t_{w^{-1}})R_{w_0}\right).
\]
\end{corollary}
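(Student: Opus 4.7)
The plan is to interpret both sides of the identity as specific coefficients in the expansion
\[
R_{w_0}=\sum_{v\in W}t_v\, a_v,\qquad a_v\in Q:=\mathrm{Frac}(S(V_\bC)),
\]
and then deduce the required equality between these coefficients from the hermitian symmetry of the $\bullet$-form on $X(\nu)$ established in Proposition~\ref{p:bullet-form-minimal}. A direct computation using $t_v t_{v'}=t_{vv'}$ together with $\delta(t_{w^{-1}})=t_{w_0w^{-1}w_0}$ yields
\[
\ep_A(t_w R_{w_0})=a_{w^{-1}},\qquad \ep_A(\delta(t_{w^{-1}})R_{w_0})=a_{w_0 w w_0},
\]
so the corollary reduces to showing $a_{w^{-1}}=a_{w_0 w w_0}$ in $Q$ for every $w\in W$.

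To establish this identity, I would work with the Gram matrix of $\langle\cdot,\cdot\rangle_\bullet$ in the basis $\{t_x\otimes\one_\nu\}_{x\in W}$ of $X(\nu)$; by the preceding proposition this matrix equals $\big(a_{x^{-1}yw_0}(w_0\nu)\big)_{x,y\in W}$. Hermiticity of the form forces it to be conjugate-symmetric, so
\[
a_{x^{-1}yw_0}(w_0\nu)=\overline{a_{y^{-1}xw_0}(w_0\nu)}\qquad\text{for all } \nu\in V^\vee,\ x,y\in W.
\]
Each coefficient $a_v$ actually lies in the real subfield $\mathrm{Frac}(S(V))\subset Q$, as one sees from the factorization $R_{w_0}=R_{s_{\al_{i_1}}}\cdots R_{s_{\al_{i_\ell}}}$ with each $R_{s_\al}=(t_{s_\al}\al-k_\al)(\al-k_\al)^{-1}$ involving only the real parameters $k_\al$ and real-linear forms $\al$; hence the conjugation on the right-hand side is trivial at real $\nu$. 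Zariski density of $V^\vee$ in $V_\bC^\vee$ then promotes the resulting pointwise identity to the equality of rational functions $a_{x^{-1}yw_0}=a_{y^{-1}xw_0}$ in $Q$. Specializing to $x=1$ and $y=w^{-1}w_0$ gives $x^{-1}yw_0=w^{-1}$ and $y^{-1}xw_0=w_0 w w_0$, which delivers $a_{w^{-1}}=a_{w_0 w w_0}$ and hence the corollary.

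The main (though mild) obstacle is making the reality of the $a_v$ precise: the hermitian condition on its own only equates a value with its complex conjugate, and one genuinely needs the explicit product formula for $R_{w_0}$ to drop the conjugation and obtain an honest identity of rational functions.
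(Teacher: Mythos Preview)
Your proof is correct and follows essentially the same route as the paper's: both deduce the identity from the hermitian symmetry of $\langle\cdot,\cdot\rangle_\bullet$ on $X(\nu)$ for real $\nu$, together with the reality of the coefficients and a density argument to pass from evaluations to an equality of rational functions. The paper is simply more terse: it rewrites the two sides directly as $\langle t_w,t_{w_0}\rangle_{\bullet,\nu}$ and $\langle t_{w_0},t_w\rangle_{\bullet,\nu}$ (after inserting $t_{w_0}^2=1$ and evaluating at $w_0\nu$) and then invokes symmetry, whereas you pass through the coefficients $a_v$ and specialize the Gram matrix indices; your explicit treatment of reality and density fills in details the paper leaves implicit.
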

\begin{proof}
The left hand side is 
\[
\ep_A\left(t_{w_0}t_{w_0}t_wR_{w_0}\right),
\]  
while the right hand side is
\[
\ep_A\left(t_{w_0}t_{w^{-1}}t_{w_0}R_{w_0}\right)
\]
Evaluating at $w_0\nu,$ the left hand side is $\langle
t_{w_0},t_{w}\rangle_{\bullet,\nu}$ while the right hand side is
$\langle t_w,t_{w_0}\rangle_{\bullet,\nu}.$ The fact that the two are
equal follows from the fact that $\langle\ ,\ \rangle_{\bullet}$ is
symmetric for $\nu$ real.
\end{proof}

{

{As a consequence of the relation (\ref{e:relation-stars})
  between $\bullet$ and $\star$ forms and Proposition
  \ref{p:bullet-form-minimal}, we have the following corollary.

\begin{corollary} \label{p:star-form-minimal}
The pairing
$$\langle h_1, h_2\rangle_\star=\ep_A\left(h_2^\star h_1  R_{w_0}\right)(w_0\nu)$$
 defines a $\star$-invariant hermitian form on $X(\nu)$ if and only if $w_0\nu=-\overline\nu$.
\end{corollary}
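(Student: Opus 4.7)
The plan is to derive this corollary directly from Proposition \ref{p:bullet-form-minimal} via the formal identity (\ref{e:relation-stars}), by verifying the three conditions (H1), (H2), (H3) on the $\bH\times\bH$ pairing $\langle h_1,h_2\rangle_\star=\epsilon_A(h_2^\star h_1 R_{w_0})(w_0\nu)$. The $\star$-invariance is already built into the definition (as an instance of the general $\kappa$-invariant pairing $\epsilon_A(L\kappa(h_2)h_1R)(\nu')$), so only the descent conditions (H1), (H2), and hermiticity (H3) need to be checked.

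Condition (H1) is automatic for any $\nu$, by the identical commutation $aR_{w_0}=R_{w_0}w_0^{-1}(a)$ and evaluation at $w_0\nu$ used in the $\bullet$ case. For (H2), I substitute $h_2\mapsto h_2 a$ into (\ref{e:relation-stars}) and use $\delta(h_2 a)=\delta(h_2)\delta(a)$ together with the $\bullet$-form identity $\langle u_1,u_2 b\rangle_\bullet=\overline{b}(\nu)\langle u_1,u_2\rangle_\bullet$ (established on the basis $\{\C R_x\otimes\one_\nu\}$ in the proof of Proposition \ref{p:bullet-form-minimal}, where the form is diagonal; for $x\neq y$ both sides vanish). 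This yields
\[
\langle h_1, h_2 a\rangle_\star=\overline{\delta(a)}(\nu)\,\langle h_1,h_2\rangle_\star,
\]
and the descent condition $\overline{a(\nu)}\langle h_1,h_2\rangle_\star$ is therefore equivalent to the scalar identity $\overline{\delta(a)}(\nu)=\overline{a(\nu)}$ for all $a\in S(V_\bC)$. By multiplicativity of $\delta$ and complex conjugation, testing on $a=\omega\in V$ gives $-\overline{\omega}(w_0\nu)=\overline\omega(\overline\nu)$, which by nondegeneracy of the pairing $V\times V^\vee\to\bC$ is precisely the condition $w_0\nu=-\overline\nu$.

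For (H3), I work on the basis $\{\C R_x\otimes\one_\nu\}$ of $X(\nu)$ for regular $\nu$. By Lemma \ref{l:A-weights}, $t_{w_0}\C R_x\otimes\one_\nu$ is a weight vector of weight $w_0 x\nu$, hence equal to $c(x)\C R_{w_0 x}\otimes\one_\nu$ for an explicit scalar $c(x)$ (satisfying $c(x)c(w_0 x)=1$ since $t_{w_0}^2=1$). Since $\delta(\C R_y)=\C R_{w_0 y w_0}$ and the $\bullet$-form is diagonal on the $\C R$-basis, $\langle\C R_x,\C R_y\rangle_\star$ vanishes unless $y=xw_0$; on the antidiagonal pairs $(\C R_x,\C R_{xw_0})$, (H3) becomes
\[
c(x)\,f_{w_0 x}(\nu)=\overline{c(xw_0)\,f_{w_0 x w_0}(\nu)},
\]
where $f_z(\nu):=\langle\C R_z,\C R_z\rangle_\bullet$ is the explicit product (\ref{eq:bprod}). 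The condition $w_0\nu=-\overline\nu$, combined with the substitution $\alpha\mapsto-w_0\alpha$ in the product and the $W$-invariance of $k$, yields the symmetry $f_{w_0 x w_0}(\overline\nu)=f_x(\nu)$ (using $\overline{f_z(\nu)}=f_z(\overline\nu)$ since the coefficients are real); together with the iterative formula for $c(x)$ along a reduced word for $w_0$, this gives the antidiagonal identity. Non-regular $\nu$ satisfying $w_0\nu=-\overline\nu$ are handled by rationality of both sides in $\nu$.

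The main obstacle is the bookkeeping in (H3): computing $c(x)$ explicitly and matching it against the symmetry of $f_z$ requires careful iteration of the formula $t_{s_\alpha}\C R_x\otimes\one_\nu=\frac{k_\alpha}{\alpha(x\nu)}\C R_x\otimes\one_\nu-\frac{k_\alpha+\alpha(x\nu)}{\alpha(x\nu)}\C R_{s_\alpha x}\otimes\one_\nu$ along a reduced decomposition of $w_0$. A cleaner alternative, for generic $\nu$ where $X(\nu)$ is irreducible, is a Schur-type argument: once (H1) and (H2) hold, the conjugate-transpose pairing $\overline{\langle w,v\rangle_\star}$ is also $\star$-invariant on the irreducible module, hence a scalar multiple of $\langle\cdot,\cdot\rangle_\star$ of absolute value $1$; the scalar equals $1$ because $\langle\one_\nu,\one_\nu\rangle_\star=\epsilon_A(R_{w_0})(w_0\nu)$ is manifestly real under the condition $w_0\nu=-\overline\nu$.
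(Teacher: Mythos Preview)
Your approach is essentially the paper's: derive the $\star$ statement from Proposition \ref{p:bullet-form-minimal} via the relation (\ref{e:relation-stars}). The paper treats this as an immediate corollary with no further argument, so you are simply supplying the details the authors omit. Your verification of (H1) and (H2) is correct; the key intermediate identity $\langle u_1,u_2 b\rangle_\bullet=b^\bullet(\nu)\langle u_1,u_2\rangle_\bullet$ (with $b^\bullet(\nu)$ rather than $\overline{b(\nu)}$) indeed holds for all $\nu$, as follows from the paper's computation on the $\C R$-basis together with (H1) for $\bullet$, and this is what makes your reduction $\overline{\delta(a)}(\nu)=\overline{a(\nu)}\Leftrightarrow w_0\nu=-\overline\nu$ go through.

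One small gap: in your Schur argument for (H3) you assert that $\epsilon_A(R_{w_0})(w_0\nu)$ is ``manifestly real'' under $w_0\nu=-\overline\nu$, but this is not manifest. You should either compute it explicitly (for instance via (\ref{eq:bprod}) and the relation to $\langle t_{w_0},1\rangle_\bullet$, or by the product formula $\epsilon_A(R_{w_0})(w_0\nu)=\prod_{\alpha>0}k_\alpha/((\alpha,\nu)+k_\alpha)$ and then apply the substitution $\alpha\mapsto -w_0\alpha$ together with $k_{-w_0\alpha}=k_\alpha$), or note that your first (H3) strategy already contains all the ingredients: the symmetry $f_{w_0xw_0}(\overline\nu)=f_x(\nu)$ you state is exactly the reality check you need, specialized to $x=1$. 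Either way the fix is short, and the overall argument stands.
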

}

\section{Positive definite forms: spherical modules}

The spherical $\star$-unitary dual of graded Hecke algebras with equal
parameters is known by \cite{Ba}, \cite{BM3}, \cite{BC3}. For Hecke
algebras with unequal parameters, the irreducible $\star$-unitary
modules that are both spherical and generic were determined in
\cite{BC2}. {The Dirac inequality \cite{BCT} is far from sufficient to 
determine the answer. In this section, we show that the Dirac inequality 
\textbf{is} sufficient to compute the spherical $\bullet$-unitary dual, at least in the case of the graded Hecke algebra with equal parameters. 
As a result, the answer, Theorem \ref{t:spherical} is much simpler than the answer for the
spherical $\star$-unitary dual in {\it loc.~cit.} Theorem \ref{t:spherical} 
 complements the results in \cite[sections 4 and 5]{O2}}. 

\subsection{The Dirac operator} We assume that the Hecke algebra $\bH$ has equal parameters $k_\al=1$. 

 We fix a $W$-invariant inner product 
$\langle~,~\rangle$ on $V$.  Let $O(V)$ denote the orthogonal group of 
$V$ with respect to $\langle~,~\rangle$. Then $W\subset O(V)$. 
Denote also by $\langle~,~\rangle$ the dual inner product on  $V^\vee.$ 
If $v$ is a vector in $V$ or $V^\vee$, we denote $|v|:=\langle v,v\rangle^{1/2}.$

Denote by $ C(V)$ the Clifford algebra defined by {$(V,\langle~,~\rangle)$}.  Precisely,  $ C(V)$ is the associative algebra
with unit generated by $V$ with relations:
\begin{equation}
\om^2=-\langle\om,\om\rangle,\quad 
\om\om'+\om'\om=-2\langle\om,\om'\rangle.
\end{equation}
Let $\mathsf{Pin}(V)$ be the Pin group, a double cover of $O(V)$ with
  projection map \newline $p:\mathsf{Pin}\longrightarrow O(V)$, and
  let $\wti W=p^{-1}(W)\subset \mathsf{Pin}(V)$ be the pin double
  cover of $W$. See
  \cite{BCT} for more details. 

If $\dim V$ is even, the Clifford algebra $C(V)$ has a unique complex simple 
module $(\gamma, S)$ of dimension $2^{\dim
    V/2}$. When $\dim V$ is odd, there are two simple inequivalent {complex}
modules $(\gamma_+,S^+),$ $(\gamma_-,S^-)$ of dimension  $2^{[\dim
  V/2]}$. 
Fix $S$ to be one of these simple modules. The choice will not play a
role in the present considerations. Endow $S$ with a positive
definite invariant Hermitian  form $\langle ~,~\rangle_{ S}$.

We call a representation $\wti\sigma$ of $\wti W$ genuine
if it does not factor through $W$. For example, $S$ is 
a genuine $\wti W$-representation.

\

Let $\{\om_i:i=1,n\}$ be an
  orthonormal basis of $V$ with respect to $\langle~,~\rangle$. 
Define (\cite{BCT}) the Casimir element of $\bH$:
$$\Omega=\sum_{i=1}^n\omega_i^2\in \bH.$$
It is easy to see that the element $\Omega$ is independent of the
choice of bases and central in $\bH$. Moreover, if $(\pi,X)$ is an irreducible
  $\bH$-module,
then $\pi(\Omega)$ acts by the scalar $\langle \cc(X),\cc(X)\rangle.$
{Note that $\langle~,~\rangle$ is extended linearly to $V_\bC$ and
$V_\bC^\vee,$ and $\cc(X)$ stands for \textbf{any} representative of the set of 
weights}.

For every $\omega\in V$, recall that we have defined
\begin{equation}\label{omtilde}
\wti\om=\frac 12(\om-\om^\star)=\om-\frac 12 \sum_{\beta>0} (\beta,\omega) t_{s_\beta}
\; \in \; \bH.
\end{equation}
It is immediate that 
\begin{equation}
\wti\omega^\star = - \wti\omega\text{ and }\wti\om^\bullet=\wti\om.
\end{equation}
The Dirac element (\cite{BCT}) is defined as
\[
\C D = \sum_i \wti \omega_i \otimes \omega_i \in \bH \otimes {C(V)}.
\]
For a finite dimensional module $X$ of $\bH$, define
the Dirac operator $D:X\otimes S\to X\otimes S$ for $X$ (and $S$) as given by the action of $\C
D$.

\subsection{Dirac inequality} Let $\kappa$ be one of the star
operations $\star$ or $\bullet.$ 

\begin{lemma}The relations
\begin{equation}
\Omega^\star=\Omega\text{ and }\Omega^\bullet=\Omega
\end{equation}
hold. Therefore, if $X$ is $\kappa$-hermitian,
\begin{equation}
\begin{aligned}
&\overline{\langle\cc(X),\cc(X)\rangle}=\langle\cc(X),\cc(X)\rangle,
\text{ or, equivalently, }\\
&\langle\cc(X),\cc(X)\rangle=|\Re\cc(X)|^2-|\Im\cc(X)|^2.
\end{aligned}
\end{equation}
\end{lemma}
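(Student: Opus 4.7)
The plan is to verify the two identities $\Omega^\star=\Omega$ and $\Omega^\bullet=\Omega$ directly from the definitions of the star operations, and then to deduce the reality of $\langle\cc(X),\cc(X)\rangle$ by applying $\kappa$-invariance to the scalar action of $\pi(\Omega)$. The $\bullet$ case is essentially tautological: since $\bullet$ is an anti-automorphism and $S(V_\bC)\subset\bH$ is commutative, its restriction to $S(V_\bC)$ is a ring \emph{automorphism}, and on generators $\omega\in V_\bC$ it acts by $\omega^\bullet=\overline\omega$. The basis $\{\omega_i\}$ lies in the real form $V$, so each $\omega_i$ is fixed, and $\Omega^\bullet=\sum_i(\omega_i^\bullet)^2=\sum_i\omega_i^2=\Omega$.

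For $\star$ I would use the defining formula $\omega^\star=-t_{w_0}\cdot\overline{\delta(\omega)}\cdot t_{w_0}$ from Definition \ref{d:basicstars}. Since $\delta|_{V}=-w_0$ and each $\omega_i$ is real, this simplifies to $\omega_i^\star=t_{w_0}\,w_0(\omega_i)\,t_{w_0}$. Because $\star$ is an anti-automorphism and $t_{w_0}^{2}=1$ in $\bH$, one gets $(\omega_i^2)^\star=(\omega_i^\star)^2=t_{w_0}\,w_0(\omega_i)^2\,t_{w_0}$, so
\begin{equation*}
\Omega^\star=t_{w_0}\left(\sum_i w_0(\omega_i)^2\right)t_{w_0}.
\end{equation*}
Since $w_0$ is an isometry of $(V,\langle~,~\rangle)$, $\{w_0(\omega_i)\}$ is again an orthonormal basis of $V$, so the inner sum equals $\Omega$; equivalently, $\sum_i w_0(\omega_i)^2=w_0\cdot\Omega=\Omega$ because $\Omega\in S(V_\bC)^W$. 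Centrality of $\Omega$ in $\bH$ then gives $\Omega^\star=t_{w_0}^{2}\,\Omega=\Omega$.

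For the consequence, I would apply the $\kappa$-invariance of the hermitian form together with the fact (stated just above) that $\pi(\Omega)$ acts on the irreducible $\kappa$-hermitian module $X$ by the scalar $\langle\cc(X),\cc(X)\rangle$. For every $x,y\in X$,
\begin{equation*}
\langle\cc(X),\cc(X)\rangle\,\langle x,y\rangle=\langle\pi(\Omega)x,y\rangle=\langle x,\pi(\Omega^\kappa)y\rangle=\langle x,\pi(\Omega)y\rangle=\overline{\langle\cc(X),\cc(X)\rangle}\,\langle x,y\rangle.
\end{equation*}
Any nonzero invariant hermitian form on an irreducible $\bH$-module is nondegenerate, so this forces $\langle\cc(X),\cc(X)\rangle\in\bR$. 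Writing $\cc(X)=\mu+i\nu$ with $\mu,\nu\in V^\vee$ and expanding via the $\bC$-linear extension of $\langle~,~\rangle$ yields $\langle\cc(X),\cc(X)\rangle=|\mu|^2-|\nu|^2+2i\langle\mu,\nu\rangle$; reality forces $\langle\mu,\nu\rangle=0$ and leaves the equivalent formulation $\langle\cc(X),\cc(X)\rangle=|\Re\cc(X)|^2-|\Im\cc(X)|^2$.

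There is no serious obstacle: the statement reduces to two short algebraic manipulations. The only mildly nontrivial point is the identification $\sum_i w_0(\omega_i)^2=\Omega$ in the $\star$ computation, which is simply the $W$-invariance of the Casimir.
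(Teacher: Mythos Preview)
Your argument is correct and follows essentially the same route as the paper's proof: for $\bullet$ the paper simply writes $\Omega^\bullet=\overline\Omega=\Omega$, and for $\star$ it computes $\Omega^\star=t_{w_0}\cdot(\overline{-w_0(\Omega)})\cdot t_{w_0}=t_{w_0}\cdot\Omega\cdot t_{w_0}=\Omega$ using centrality, which is exactly your computation written more tersely. Your added paragraph deriving the reality of $\langle\cc(X),\cc(X)\rangle$ from $\kappa$-invariance and the scalar action of $\Omega$ is correct and simply spells out the ``Therefore'' that the paper leaves implicit.
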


\begin{proof}
For $\star$: $\Omega^\star=t_{w_0}\cdot (\overline{-w_0(\Omega)})\cdot
t_{w_0}=t_{w_0}\cdot \Omega\cdot t_{w_0}=\Omega,$ where the last
equality follows from the fact that $\Omega\in Z(\bH).$

For $\bullet$: $\Omega^\bullet=\overline\Omega=\Omega.$
\end{proof}

Suppose $X$ is a $\kappa$-hermitian $\bH$-module with invariant form $(~,~)_X$.
Then $X \otimes S$ gets the
Hermitian form $(x\otimes s, x' \otimes s')_{X \otimes S} 
= (x,x')_X  \langle s,s'\rangle_S$.  The operator $D$ is self adjoint with
respect to $(~,~)_{X \otimes S}$ if $\kappa=\star$ and it is
skew-adjoint if $\kappa=\bullet$:
\begin{equation}
D^*=D\text{ and } D^\bullet=-D.
\end{equation}
Thus a $\kappa$-hermitian $\bH$-module 
is $\kappa$-unitary only if for all $x \in X \otimes S$,
\begin{equation}
\label{e:dcriterion}
\begin{aligned}
&(D^2 x, x)_{X\otimes S} \geq 0, \qquad \text{ if } \kappa=\star,
\text{ or }\\
&(D^2 x, x)_{X\otimes S} \leq 0, \qquad \text{ if } \kappa=\bullet.\\
\end{aligned}
\end{equation}
We write $\Delta_{\wti W}$ for the diagonal embedding of $\bC[\wti W]$ into
{$\bH \otimes C(V)$} defined by
extending
$\Delta_{\wti W}(\wti w) =t_{p(\wti w)} \otimes \wti w$
linearly.

\begin{theorem}[{\cite[Theorem 3.5]{BCT}}]\label{t:dirac} The square of the Dirac
  element equals
\begin{equation}
\C D^2=-\Omega\otimes 1+\Delta_{\wti W}(\Omega_{\wti W}),
\end{equation}
in $\bH \otimes C(V)$, where
\begin{equation}\label{omWtilde}
\Omega_{\wti W}=-\frac 14\sum_{\substack{\al>0,\beta>0\\s_\al(\beta)<0}} 
 |\al^\vee| |\beta^\vee| ~\wti s_\al \wti s_\beta\in \mathbb C[\wti
 W]^{\wti W}.
\end{equation}
\end{theorem}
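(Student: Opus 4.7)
My plan is to compute $\C D^2$ directly in $\bH\otimes C(V)$: split the result using the Clifford anticommutation $\omega_i\omega_j+\omega_j\omega_i=-2\delta_{ij}$ into diagonal and off-diagonal parts, and then match these parts against the scalar and bivector components of $\Delta_{\wti W}(\Omega_{\wti W})$. Squaring $\C D=\sum_i\wti\omega_i\otimes\omega_i$ yields
\[
\C D^2=\sum_{i,j}\wti\omega_i\wti\omega_j\otimes\omega_i\omega_j = -\sum_i\wti\omega_i^2\otimes 1+\sum_{i<j}[\wti\omega_i,\wti\omega_j]\otimes\omega_i\omega_j.
\]
The whole game is then to evaluate the two summands.

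For the diagonal piece I would expand $\wti\omega_i=\omega_i-T_{\omega_i}$ with $T_\omega=\tfrac12\sum_{\beta>0}(\omega,\beta^\vee)t_{s_\beta}$ and push each $\omega_i$ past the $t_{s_\beta}$'s inside $T_{\omega_i}$ by the commutation relation of Definition \ref{d:graded}(ii) (with $k_\alpha=1$). The pure $S(V)$-contribution is $\sum_i\omega_i^2=\Omega$, while the mixed terms $\omega_iT_{\omega_i}+T_{\omega_i}\omega_i$ and the quadratic term $T_{\omega_i}^2$ collapse by telescoping into an explicit element $C\in\bC[W]$; so $\sum_i\wti\omega_i^2=\Omega+C$. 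For the off-diagonal piece, property (3) in section \ref{sec:3.2} already guarantees
\[
[\wti\omega_i,\wti\omega_j]=-[T_{\omega_i},T_{\omega_j}]=-\tfrac14\sum_{\alpha,\beta>0}(\omega_i,\alpha^\vee)(\omega_j,\beta^\vee)\,[t_{s_\alpha},t_{s_\beta}]\in\bC[W],
\]
and summing against $\omega_i\omega_j$ over $i<j$ can be converted, using the orthonormal-basis identity $\alpha^\vee=\sum_i(\omega_i,\alpha^\vee)\omega_i$ (under the identification $V\cong V^\vee$ induced by $\langle,\rangle$), into an expression in $\bC[W]\otimes C(V)^{\mathrm{biv}}$ indexed by ordered pairs of positive roots.

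To complete the identification with $\Delta_{\wti W}(\Omega_{\wti W})$, I use the Pin realization $\wti s_\alpha=|\alpha|^{-1}\alpha\in\mathsf{Pin}(V)\subset C(V)$, which gives $\Delta_{\wti W}(\wti s_\alpha\wti s_\beta)=t_{s_\alpha s_\beta}\otimes|\alpha|^{-1}|\beta|^{-1}\alpha\beta$, and I decompose $\alpha\beta=-\langle\alpha,\beta\rangle+\tfrac12(\alpha\beta-\beta\alpha)$ into scalar and bivector parts. The scalar parts of $\Delta_{\wti W}(\Omega_{\wti W})$ must reproduce $-C\otimes 1$, and the bivector parts must reproduce the off-diagonal commutator sum; the relation $\alpha^\vee=2\alpha/|\alpha|^2$ converts between the root length (from the Pin lift) and the coroot length (from the coroot pairings in $T_\omega$), producing the coefficient $|\alpha^\vee||\beta^\vee|$. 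The main obstacle is the restriction to the asymmetric index set $\{(\alpha,\beta):\alpha,\beta>0,\,s_\alpha(\beta)<0\}$ in $\Omega_{\wti W}$: both $C$ and the commutator sum initially range symmetrically over all ordered pairs of positive roots, and reducing to this asymmetric subset requires careful use of the braid relations in $\bH$ to put each $t_{s_\alpha}t_{s_\beta}$ in normal form together with antisymmetrization against the Clifford bivector. This combinatorial reorganization, which handles scalar and bivector components simultaneously and pins down the exact coefficient $|\alpha^\vee||\beta^\vee|$, is the technical heart of the argument.
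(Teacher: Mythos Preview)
The paper does not prove this theorem; it is quoted verbatim from \cite[Theorem~3.5]{BCT} and used as a black box, so there is no ``paper's own proof'' to compare against. Your outline is essentially the argument of \cite{BCT}: split $\C D^2$ via the Clifford anticommutation into a diagonal piece $-\sum_i\wti\omega_i^2\otimes 1$ and an off-diagonal piece $\sum_{i<j}[\wti\omega_i,\wti\omega_j]\otimes\omega_i\omega_j$, then identify these with the scalar and bivector parts of $\Delta_{\wti W}(\Omega_{\wti W})$ using the Pin embedding $\wti s_\alpha\mapsto |\alpha|^{-1}\alpha$. Your diagnosis of the main technical point---reducing the a priori symmetric double sum over positive roots to the asymmetric index set $\{s_\alpha(\beta)<0\}$---is accurate; in \cite{BCT} this is handled by a separate combinatorial lemma (essentially a Solomon-type identity in $\bC[W]$), and your sketch does not yet supply that step, only names it. If you intend this as a self-contained proof rather than a pointer to \cite{BCT}, that reduction needs to be carried out explicitly.
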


Denote
\begin{equation}
\wti\Sigma(X)=\{\wti\sigma\in\mathsf{Irr}~\wti W: \Hom_{\wti
  W}[\wti\sigma,X\otimes S]\neq 0\}.
\end{equation}

\begin{corollary}[Dirac Inequality]\label{c:bound}Let $X$ be a $\kappa$-unitary $\bH$-module.
\begin{enumerate}
\item If $\kappa=\star$, then $|\Re\cc(X)|^2-|\Im\cc(X)|^2\le
  \min\{\wti\sigma(\Omega_{\wti W}): \wti\sigma\in \wti\Sigma(X)\}$.
\item If $\kappa=\bullet$, then $|\Re\cc(X)|^2-|\Im\cc(X)|^2\ge
  \max\{\wti\sigma(\Omega_{\wti W}): \wti\sigma\in \wti\Sigma(X)\}$.
\end{enumerate}

\end{corollary}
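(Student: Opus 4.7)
The plan is to derive the inequality by decomposing $X\otimes S$ into $\wti W$-isotypic components and applying the formula for $\C D^2$ from Theorem \ref{t:dirac} on each component, then comparing signs according to whether $D$ is self-adjoint or skew-adjoint.

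First I would observe that $\Omega_{\wti W}$ lies in the center $\bC[\wti W]^{\wti W}$ of the group algebra of $\wti W$, so by Schur's Lemma it acts on every irreducible $\wti W$-representation $\wti\sigma$ as a scalar, which I denote $\wti\sigma(\Omega_{\wti W})$. As a $\wti W$-module (via the diagonal embedding $\Delta_{\wti W}$), $X\otimes S$ decomposes as
\begin{equation*}
X\otimes S \;=\; \bigoplus_{\wti\sigma\in \wti\Sigma(X)} (X\otimes S)_{\wti\sigma},
\end{equation*}
where $(X\otimes S)_{\wti\sigma}$ is the $\wti\sigma$-isotypic component, and by the definition of $\wti\Sigma(X)$ only these summands are nonzero. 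Since $\Omega$ is central in $\bH$, it acts on $X$ by $\langle \cc(X),\cc(X)\rangle = |\Re\cc(X)|^2 - |\Im\cc(X)|^2$, by the preceding lemma. Therefore, by Theorem \ref{t:dirac}, $D^2$ acts on each $(X\otimes S)_{\wti\sigma}$ as the scalar
\begin{equation*}
c_{\wti\sigma} \;:=\; -\bigl(|\Re\cc(X)|^2-|\Im\cc(X)|^2\bigr) + \wti\sigma(\Omega_{\wti W}).
\end{equation*}

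Next I would exploit the (skew-)self-adjointness of $D$ with respect to the Hermitian form on $X\otimes S$. Choose a nonzero $x\in (X\otimes S)_{\wti\sigma}$ for any given $\wti\sigma\in \wti\Sigma(X)$; since $X$ is $\kappa$-unitary and $S$ carries a positive definite $\wti W$-invariant form, the induced form on $X\otimes S$ is positive definite, so $(x,x)_{X\otimes S}>0$. Then $(D^2 x,x)_{X\otimes S} = c_{\wti\sigma}\,(x,x)_{X\otimes S}$. In the $\star$ case $D$ is self-adjoint, so $(D^2x,x)_{X\otimes S}\ge 0$, forcing $c_{\wti\sigma}\ge 0$, i.e.\
\begin{equation*}
|\Re\cc(X)|^2-|\Im\cc(X)|^2 \;\le\; \wti\sigma(\Omega_{\wti W}),
\end{equation*}
for every $\wti\sigma\in \wti\Sigma(X)$; taking the minimum gives (1). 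In the $\bullet$ case $D$ is skew-adjoint, so $(D^2x,x)_{X\otimes S}\le 0$, forcing $c_{\wti\sigma}\le 0$, which flips the inequality and, upon taking the maximum over $\wti\sigma\in \wti\Sigma(X)$, yields (2).

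There is really no obstacle in this argument; all the hard geometric/algebraic content has already been packaged into Theorem \ref{t:dirac} and into the (skew-)self-adjointness assertions $D^*=D$ and $D^\bullet=-D$, which are immediate from the identities $\wti\omega^\star=-\wti\omega$, $\wti\omega^\bullet=\wti\omega$ and the fact that Clifford multiplication by $\omega\in V$ is skew-adjoint on $S$. The only minor verification is that $(X\otimes S)_{\wti\sigma}\ne 0$ precisely for $\wti\sigma\in \wti\Sigma(X)$, which is simply the definition of $\wti\Sigma(X)$.
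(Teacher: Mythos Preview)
Your argument is correct and is precisely the approach the paper intends: the paper's proof consists of the single sentence that the corollary follows immediately from Theorem~\ref{t:dirac} and the sign condition~(\ref{e:dcriterion}), and you have simply spelled out those two ingredients on each $\wti W$-isotypic component.
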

  \begin{proof}
This is  an immediate corollary of Theorem \ref{t:dirac} and
(\ref{e:dcriterion}).

  \end{proof}

\subsection{Spherical modules}  Let $X(\nu)=\bH\otimes_{\bA}\bC_\nu$ be
the spherical principal series, $\nu\in V_\bC^\vee$ and let $\overline
X(\nu)$ be the unique spherical subquotient, \ie
$\Hom_W[\triv,\overline X(\nu)]\neq 0$. 

\begin{lemma}\label{l:diracbulletspher}
Suppose $\overline X(\nu)$ is $\bullet$-unitary. Then $|\Re\nu|^2\ge |\Im\nu|^2+|\rho^\vee|^2.$
\end{lemma}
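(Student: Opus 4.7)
The plan is to apply the Dirac inequality for $\bullet$-unitarity, Corollary \ref{c:bound}(2), to the spherical subquotient $\overline X(\nu)$. To do this, I need to exhibit a $\wti W$-type inside $\overline X(\nu)\otimes S$ whose $\Omega_{\wti W}$-eigenvalue is exactly $|\rho^\vee|^2$.

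The first step is soft: since $\overline X(\nu)$ is spherical, there is a $W$-equivariant inclusion $\triv\hookrightarrow \overline X(\nu)$. Tensoring with $S$ yields a $\wti W$-equivariant inclusion
\[
S\;\cong\;\triv\otimes S\;\hookrightarrow\;\overline X(\nu)\otimes S,
\]
so every irreducible $\wti W$-constituent $\wti\sigma$ of $S$ (automatically genuine, since $S$ is a Clifford module on which the nontrivial element of $\ker p$ acts by $-1$) belongs to $\wti\Sigma(\overline X(\nu))$.

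The main step is a Kostant-type identity: the central element $\Omega_{\wti W}\in\bC[\wti W]^{\wti W}$ acts as the scalar $|\rho^\vee|^2$ on every irreducible $\wti W$-summand of the spin module $S$. This is established in \cite{BCT}; one way to see it is to start from the explicit expression (\ref{omWtilde}) and use the fact that the image of $\wti s_\al$ in the Clifford algebra is a scalar multiple of $\al^\vee/|\al^\vee|$. Expanding
\[
\Omega_{\wti W}\big|_S\;=\;-\tfrac14\sum_{\substack{\al>0,\,\beta>0\\ s_\al(\beta)<0}}\al^\vee\beta^\vee,
\]
applying the Clifford relation $\al^\vee\beta^\vee+\beta^\vee\al^\vee=-2\langle\al^\vee,\beta^\vee\rangle$, and using the standard combinatorial identity for sums over positive root pairs with $s_\al(\beta)<0$, one obtains $\langle\rho^\vee,\rho^\vee\rangle$.

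Combining these two ingredients with Corollary \ref{c:bound}(2) applied to $X=\overline X(\nu)$ yields
\[
|\Re\cc(\overline X(\nu))|^2-|\Im\cc(\overline X(\nu))|^2\;\ge\;\wti\sigma(\Omega_{\wti W})\;=\;|\rho^\vee|^2,
\]
and since $\cc(\overline X(\nu))=W\cdot\nu$ and the left-hand side is $W$-invariant, this is precisely $|\Re\nu|^2\ge|\Im\nu|^2+|\rho^\vee|^2$. The main obstacle is the Kostant-type computation of $\Omega_{\wti W}|_S$, which concentrates all the content of the argument; the rest is a formal application of the Dirac inequality together with sphericality.
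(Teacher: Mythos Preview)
Your proof is correct and follows essentially the same approach as the paper: use sphericality to see that $S$ (or its irreducible $\wti W$-constituents) lies in $\wti\Sigma(\overline X(\nu))$, invoke the identity $S(\Omega_{\wti W})=|\rho^\vee|^2$, and apply the $\bullet$-Dirac inequality of Corollary~\ref{c:bound}(2). The paper's argument is identical but terser, simply asserting the scalar action of $\Omega_{\wti W}$ on $S$ as known, whereas you sketch the Clifford computation behind it.
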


\begin{proof}
Since $\Hom_W[\triv,\overline X(\nu)]\ne 0$, we have $S\in
\wti\Sigma(\overline X(\nu)).$ It is known and easy to see that
$S(\Omega_{\wti W})=|\rho^\vee|^2$. The claim then follows from
Corollary \ref{c:bound}.
\end{proof}

The parameters $\nu\in V^\vee_\bC$ for which the spherical principal series $X(\nu)$ becomes reducible are known, see \cite{Ch} for the general case. This also follows from the Kazhdan-Lusztig classification, proved in the graded affine Hecke algebra case in \cite{L2}. When the parameter $\nu$ is regular, the reducibility is a consequence of intertwining operator calculations and it goes back in the setting of $p$-adic groups to Casselman \cite{Cas2}. In the equal parameters case, the result is that $X(\nu)$ is reducible if and only if:
\begin{equation}
(\al,\nu)=\pm 1, \text{ for some }\al\in R^+.
\end{equation}

Suppose that $\nu\in V^\vee$ is dominant, i.e.,  $(\al,\nu)\ge 0,$ for all 
$\al\in \Pi.$  
The reducibility hyperplanes $\al=1$, $\al\in R^+$, define an arrangement
of hyperplanes in the dominant Weyl chamber in $V^\vee$. One open region in the
complement of this arrangement of hyperplanes is
\begin{equation}
\C C_\infty=\{\nu\in V^\vee: (\al,\nu)>1,\ \al\in\Pi\}.
\end{equation}

\begin{lemma}[{see also \cite[Theorem 4.1]{O2}}]\label{l:spherical}
If $\nu\in \overline{\C C}_\infty$, then $\overline X(\nu)$ is $\bullet$-unitary.
\end{lemma}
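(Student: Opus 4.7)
The plan is to apply the explicit formula (\ref{eq:bprod}) for the $\bullet$-form on the basis $\{\C R_x\otimes\one_\nu\}_{x\in W}$ of $X(\nu)$ and verify that it is positive semi-definite for $\nu\in\overline{\C C}_\infty$; then pass to the radical quotient and identify that quotient with $\overline X(\nu)$ via a semisimplicity argument.

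\emph{Step 1.} For $\nu\in\overline{\C C}_\infty$ we have $(\al,\nu)\ge 1>0$ for every $\al\in\Pi$, so $\nu$ is real and $W$-regular. By Proposition \ref{p:bullet-form-minimal} there is a nondegenerate $\bullet$-invariant hermitian form on $X(\nu)$, and by Lemma \ref{l:A-weights} the vectors $\C R_x\otimes\one_\nu$, $x\in W$, form a basis of $X(\nu)$ consisting of $\bA$-weight vectors with pairwise distinct real weights $x\nu$. For any $\bullet$-invariant form and any $a\in V$, using $a^\bullet=\bar a=a$, one has $(a,\lambda)\langle v_\lambda,v_\mu\rangle=\langle av_\lambda,v_\mu\rangle=\langle v_\lambda,av_\mu\rangle=\overline{(a,\mu)}\langle v_\lambda,v_\mu\rangle=(a,\mu)\langle v_\lambda,v_\mu\rangle$ for real weights $\lambda,\mu$, so distinct real $\bA$-weight spaces are orthogonal. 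Hence $\langle\,,\,\rangle_\bullet$ is diagonal in the $\C R$-basis.

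\emph{Step 2.} The diagonal entries are given by (\ref{eq:bprod}). Since $k_\al\equiv 1$ and every $\al\in R^+$ is of the form $\al=\sum_ic_i\al_i$ with $c_i\in\bZ_{\ge 0}$ and $\sum_i c_i\ge 1$, we obtain
\[
(\al,\nu)=\sum_i c_i(\al_i,\nu)\ \ge\ \sum_i c_i\ \ge\ 1\ =\ k_\al \quad\text{for every } \al\in R^+.
\]
Therefore $(\al,\nu)-k_\al\ge 0$ and $(\al,\nu)+k_\al>0$ for every $\al>0$. Together with $(-1)^{|R|}=1$, each diagonal entry in (\ref{eq:bprod}) is non-negative, so $\langle\,,\,\rangle_\bullet$ is positive semi-definite on $X(\nu)$.

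\emph{Step 3.} Let $N\subset X(\nu)$ be the radical of the form. By $\bullet$-invariance, $N$ is an $\bH$-submodule, and the form descends to a positive-definite $\bullet$-invariant hermitian form on $X(\nu)/N$; in particular $X(\nu)/N$ is $\bullet$-unitary, hence completely reducible as an $\bH$-module. The spherical vector $1\otimes\one_\nu=\C R_1\otimes\one_\nu$ has strictly positive norm squared $\prod_{\al>0}\frac{(\al,\nu)}{(\al,\nu)+k_\al}>0$ (the empty second product in (\ref{eq:bprod}) equals $1$), so its image in $X(\nu)/N$ is nonzero. Since $X(\nu)$ is cyclic as an $\bH$-module on $1\otimes\one_\nu$, so is $X(\nu)/N$; combined with complete reducibility this forces $X(\nu)/N$ to be irreducible. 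Being a nonzero quotient of $X(\nu)$ containing the trivial $W$-type, it must coincide with $\overline X(\nu)$, which is therefore $\bullet$-unitary.

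\textbf{Main obstacle.} There is no serious obstacle once Proposition \ref{p:bullet-form-minimal} and the norm formula (\ref{eq:bprod}) are in hand: the lemma is essentially a positivity inequality. The only real content beyond writing down the formula is the elementary monoidal observation that requiring dominance past the walls $(\al,\nu)=1$ only for simple roots $\al$ already yields the same bound for \emph{every} positive root, which is exactly what makes each factor in the second product of (\ref{eq:bprod}) nonnegative.
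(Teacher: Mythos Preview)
Your overall strategy---use the diagonal norm formula (\ref{eq:bprod}) and check positivity---is exactly what the paper does, but Step~3 contains a genuine error: a cyclic semisimple module need \emph{not} be irreducible. For instance $\bC[x]/(x^2-1)$ is cyclic on $1$ yet splits as a direct sum of two one-dimensional pieces. So from ``$X(\nu)/N$ is cyclic and completely reducible'' you cannot conclude irreducibility, and your final identification of $X(\nu)/N$ with $\overline X(\nu)$ is unjustified as written. (Relatedly, the claim in Step~1 that the form is \emph{nondegenerate} is false on the boundary of $\C C_\infty$: some diagonal entries in (\ref{eq:bprod}) vanish precisely when $(\al,\nu)=1$, and indeed $X(\nu)$ is reducible but indecomposable there, so no nondegenerate invariant positive form can exist on it.)

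The fix is easy. For dominant $\nu$ the Langlands classification says $\overline X(\nu)$ is the \emph{unique} irreducible quotient of $X(\nu)$; hence it is a quotient of the nonzero module $X(\nu)/N$, and by semisimplicity a direct summand, so it inherits a positive definite $\bullet$-form. Alternatively---and this is what the paper does---simply restrict to the open cell $\C C_\infty$ first: there $X(\nu)$ is irreducible (no root satisfies $(\al,\nu)=\pm1$), so $X(\nu)=\overline X(\nu)$, and every diagonal entry in (\ref{eq:bprod}) is \emph{strictly} positive. Unitarity on the closure $\overline{\C C}_\infty$ then follows by the standard fact that unitarity is preserved under limits. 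The paper's route avoids the radical-quotient bookkeeping entirely.
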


\begin{proof}
It is sufficient to prove that $X(\nu)$ is $\bullet$-unitary when
$\nu\in \C C_\infty$. If $\nu\in V^\vee$ is such that $(\al,\nu)\neq
\pm 1$, then $X(\nu)$ is $\bA$-semisimple with a basis of weight vectors
given by $\C R_x\otimes \one_\nu,$ see Lemma \ref{l:A-weights}. By
(\ref{eq:bprod}), the
$\bullet$-form in this basis is diagonal, and if we normalize the form
so that $\langle \C R_1\otimes\one_\nu,\C
R_1\otimes\one_\nu\rangle_\bullet=1,$ then
\begin{equation}
\langle \C R_x\otimes\one_\nu,\C
R_x\otimes\one_\nu\rangle_\bullet=\prod_{x\nu<0}\frac{(\al,\nu)-1}{(\al,\nu)+1}.
\end{equation}
Clearly, if $\nu\in\C C_\infty$, then $\langle \C R_x\otimes\one_\nu,\C
R_x\otimes\one_\nu\rangle_\bullet>0$ for all $x\in W$.
\end{proof}

\begin{theorem}\label{t:spherical}
Suppose $\nu$ is dominant. 
\begin{enumerate}
\item If $\Im\nu=0$, then $\overline X(\nu)$ is $\bullet$-unitary if
  and only if $\nu\in \overline{\C C}_\infty.$
\item If $\Im\nu\neq 0,$ then $\overline X(\nu)$ is not $\bullet$-unitary.
\end{enumerate}
\end{theorem}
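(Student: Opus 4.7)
The plan combines three ingredients already in hand: Lemma \ref{l:spherical} (which gives the ``if'' direction of (1)), the Dirac inequality of Corollary \ref{c:bound}(2), and the explicit diagonalization (\ref{eq:bprod}) of the $\bullet$-form on $X(\nu)$ in the basis $\{\C R_x\otimes\one_\nu\}_{x\in W}$. All the substantive work concentrates on the ``only if'' direction of (1) and on (2).

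For the ``only if'' direction of (1), fix $\nu\in V^\vee$ dominant with $\nu\notin\overline{\C C}_\infty$, so $(\al_0,\nu)<1$ for some simple root $\al_0\in\Pi$. I would first treat the generic case in which $(\beta,\nu)\ne\pm 1$ for every $\beta\in R^+$, so that $X(\nu)$ is irreducible and coincides with $\overline X(\nu)$. Formula (\ref{eq:bprod}), after the uniform normalization given in the paragraph following it, diagonalizes the $\bullet$-form with entries $\prod_{x\al<0}\frac{(\al,\nu)-1}{(\al,\nu)+1}$; specializing $x=s_{\al_0}$ the product collapses to the single factor $\frac{(\al_0,\nu)-1}{(\al_0,\nu)+1}<0$ (the denominator is positive by dominance), so $\overline X(\nu)$ is not $\bullet$-unitary. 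For $\nu$ on a reducibility hyperplane, I would perturb via $\nu_t=\nu+te$ with $e$ chosen so that $\nu_t$ stays real dominant outside $\overline{\C C}_\infty$ and $X(\nu_t)$ remains irreducible for $0<t\ll 1$; the form on $\overline X(\nu)$ arises as the quotient of the form on $X(\nu)$ by its radical, and a continuity-of-signature argument along the one-parameter family $\nu_t$ produces a negative direction that survives the quotient.

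For (2), write $\nu=\nu_R+i\nu_I$ with $\nu_I\ne 0$ and suppose $\overline X(\nu)$ is $\bullet$-hermitian. Since $\bullet$ acts as complex conjugation on the center $S(V_\bC)^W$, the central character must satisfy $W\overline\nu=W\nu$, giving some $w\in W$ with $w\nu_R=\nu_R$ and $w\nu_I=-\nu_I$. When $\nu_R$ is in the open dominant chamber the only such $w$ is the identity, which forces $\nu_I=0$ and contradicts $\Im\nu\ne 0$. In the residual case where $\nu_R$ sits on a wall, I would combine the Dirac bound $|\Re\nu|^2\ge|\Im\nu|^2+|\rho^\vee|^2$ from Lemma \ref{l:diracbulletspher} with a parabolic reduction to the graded Hecke subalgebra attached to the stabilizer $W_{\nu_R}$, so that the signature argument of part (1) applies to $\nu_R$ inside the Levi. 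The main obstacle I expect is precisely this boundary situation: propagating non-unitarity from the Levi's spherical quotient back to $\overline X(\nu)$ requires verifying that the $\bullet$-form on $\overline X(\nu)$ restricts compatibly through the induction and that no accidental cancellation rescues unitarity, and this is where careful intertwining-operator book-keeping will be needed.
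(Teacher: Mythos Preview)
Your approach to the ``only if'' direction of (1) is genuinely different from the paper's. You use the explicit diagonal formula (\ref{eq:bprod}) to exhibit a negative vector $\C R_{s_{\al_0}}\otimes\one_\nu$, whereas the paper never invokes (\ref{eq:bprod}) for this direction; instead it combines the Dirac inequality $|\nu|\ge|\rho^\vee|$ of Lemma \ref{l:diracbulletspher} with a geometric deformation inside each cell $\C C\not\subset\overline{\C C}_\infty$ of the arrangement $\{\al=1\}$, reaching a point of norm strictly less than $|\rho^\vee|$ in $\overline{\C C}$. Your argument in the generic (irreducible) case is correct and pleasantly direct.

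The gap is the boundary case. You pass from indefiniteness of the form on $X(\nu_t)$ for $t>0$ to indefiniteness of the form on the quotient $\overline X(\nu_0)$, asserting that the negative direction ``survives the quotient''. But semicontinuity of hermitian forms runs the other way: \emph{definiteness} persists to limits and to quotients by the radical, while indefiniteness need not. A priori the negative vector could fall into the radical of the limiting form on $X(\nu_0)$, and you have not argued that $X(\nu_0)/\mathrm{radical}$ coincides with $\overline X(\nu_0)$ nor that $\C R_{s_{\al_0}}\otimes\one_{\nu_0}$ projects nontrivially there. The paper's Dirac argument sidesteps exactly this: one \emph{assumes} unitarity on a cell, passes to a boundary point (the valid direction of semicontinuity), and contradicts the Dirac bound there.

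For (2) the paper again runs a uniform Dirac argument: fixing the imaginary part $b$, reducibility of $X(\nu+b)$ is governed only by the proper subsystem $R_b$ of roots orthogonal to $b$, and the deformation of (1), rerun with $R_b$ in place of $R$, places a point with $|\nu|\le|\rho_b^\vee|$ in the closure of \emph{every} cell; since $|\rho_b^\vee|^2<|\rho^\vee|^2+|b|^2$, the Dirac inequality is violated there. Your observation that $\bullet$-hermitianness forces $W\overline\nu=W\nu$, hence $\nu_I=0$ when $\Re\nu$ is regular dominant, is correct and handles that case cleanly, but the wall case is only a strategy, and the parabolic-reduction compatibility you yourself flag as the main obstacle is precisely what the paper's Dirac approach circumvents.
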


\begin{proof} (1) Suppose first that $\Im\nu=0.$ By Lemma
  \ref{l:spherical}, 
  $\overline X(\nu)$ is $\bullet$-unitary also for $\nu\in
  \overline{\C C}_\infty$.

For the converse, notice that by Lemma \ref{l:diracbulletspher}, $|\nu|\ge |\rho^\vee|$ is a
necessary condition for $\overline X(\nu)$ to be
$\bullet$-unitary. 
Let $B(0,|\rho^\vee|)\subset V^\vee$ be the open ball of radius
$|\rho^\vee|$ centered at the origin. Let $\C C$ be an arbitrary cell in the arrangement of
hyperplanes $\al=1$ in the dominant Weyl chamber of $V^\vee$. Since
the signature of the hermitian form of $\overline X(\nu)$ is the same
for all $\nu\in \C C$ (see \cite[Theorem 2.4]{BC2}), a necessary condition for
$\bullet$-unitarity in $\C C$ is that
\begin{equation}
\overline{\C C}\cap B(0,|\rho^\vee|)=\emptyset.
\end{equation}
We claim that this condition only holds when $\C C\subset \overline
{\C C}_\infty.$ 

The cell $\C C$ is characterized by the sets:
\begin{equation*}
\begin{aligned}
&J_0(\C C)=\{\al\in R^+: (\al,\nu)=1, \forall\nu\in \C C\},\\
 & J_+(\C C)=\{\al\in R^+:
(\al,\nu)>1, \forall\nu\in \C C\}, \  J_-(\C C)=\{\al\in R^+:
(\al,\nu)<1,\forall\nu\in\C C\}.
\end{aligned}
\end{equation*}

Suppose $\C C\not\subset\overline{\C C}_\infty.$ Then $J_-(\C C)\cap
\Pi\neq\emptyset.$ List the simple roots as
$\{\al_1,\dots,\al_m,\dots,\al_n\}$, $m<n$,
such that $\{\al_1,\dots,\al_m\}\subset J_0(\C C)\cup J_+(\C C)$ and
$\{\al_{m+1},\dots,\al_n\}\subset J_-(\C C).$ Let
$\{\om_1^\vee,\dots,\om_n^\vee\}\subset V^\vee$ be the fundamental
coweights, and write $\nu=\sum_{i=1}^n c_i\om_i^\vee.$ Then
\[ c_i\ge 1,\ i=1,m,\quad c_j<1,\ j=m+1,n.\]
Deform $c_i\to 1$, $i=1,m$ to get the point $\nu'=\sum_{i=1}^n
c_i'\om_j^\vee$, where $c_i'=1$ if $i=1,m$ and $c_i'=c_i$ if $i=m+1,n$. Notice that if
$(\beta,\nu)<1$, then $(\beta,\nu')<1$ also, and if $(\gamma,\nu)\ge
1$ then $(\gamma,\nu')\ge 1$ again. This means that $\nu'\in
\overline{\C C}.$ But now:
\begin{equation}
\langle\nu',\nu'\rangle=\sum_{i,j}c_i' c_j'\langle\om_i^\vee,\om_j^\vee\rangle<\sum_{i,j}\langle\om_i^\vee,\om_j^\vee\rangle=\langle\rho^\vee,\rho^\vee\rangle,
\end{equation}
and this means that $\nu'\in B(0,|\rho^\vee|)$, contradiction. We have
used here implicitly that $\langle\om_i^\vee,\om_j^\vee\rangle\ge 0.$ Thus
claim (1) is proven.

\

For claim (2), fix $b\in \sqrt{-1} V^\vee\setminus\{0\}$, and set $\Pi_b=\{\al\in\Pi:
(\al,b)=0\}\subsetneq\Pi.$ Let $R_b$ be the root subsystem defined by
$\Pi_b$ with positive roots $R_b^+.$ Let $\nu\in V^\vee$ dominant and we look at the
principal series $X(\nu+b).$

This is reducible if and only if there
exists $\beta\in R_b^+$ such that $(\beta,\nu)=1.$ Repeating the
argument above with $R_b^+$ in place of $R^+$, we find that every cell
$\C C$ contains in its closure a point $\nu$ such that $|\nu|\le
|\rho_b^\vee|.$ But now, the Dirac inequality in Lemma
\ref{l:diracbulletspher} gives the necessary condition
\begin{equation}
|\nu|^2\ge |b|^2+|\rho^\vee|^2>|\rho_b^\vee|^2,
\end{equation}
and this is not satisfied.
\end{proof}

\subsection{Dirac cohomology} Let $X$ be a finite dimensional module
of $\bH$. The Dirac cohomology of $X$ (with respect to the fixed spin
module $S$) is
\begin{equation}
H^D(X)=\ker D/\ker D\cap \im D.
\end{equation}
We say that $X$ has nonzero Dirac cohomology if $H_D(X)\neq 0$ for a
choice of spin module $S$.

Recall that when $X$ is $\kappa$-unitary, then the operator $D$ is
self-adjoint when $\kappa=\star$ and skew-adjoint when
$\kappa=\bullet.$ This implies that if $X$ is $\kappa$-unitary, then
\begin{equation}\label{H_D=kerD}
H^D(X)=\ker D.
\end{equation}
Theorem \ref{t:spherical} says, in particular, that every irreducible
subquotient of $X(\rho^\vee)$ is $\bullet$-unitary, so for all these
subquotients, equation (\ref{H_D=kerD}) applies.

The classification of irreducible subquotients of $X(\rho^\vee)$ is
well known. In the setting of $p$-adic groups, it is due to Casselman \cite{Cas3}. Each standard module with central character
$\rho^\vee$ is of the form
\begin{equation}
X_M=\bH\otimes_{\bH_M} (\St\otimes \bC_{\nu_M}),\text{ where }\nu_M=\rho^\vee-\rho^\vee_M,
\end{equation}
 for $M\subset\Pi$. Let $\overline X_M$ be the Langlands quotient of
 $X_M.$ Here $\bH_M$ denotes the graded Hecke algebra defined by the root system $(V,R_M,V^\vee,R_M^\vee)$, where $R_M$ is the subset of roots spanned by $M$. (We embed $\bH_M$ as a subalgebra of $\bH$ in the natural way.) Thus we have a one-to-one correspondence between irreducible
 modules with central character $\rho^\vee$ and subsets of simple
 roots $\Pi.$ In particular, there are $2^n$, $n=|\Pi|$, distinct
 simple modules with central character $\rho^\vee.$ Moreover, the
 following known character formula holds:
\begin{equation}\label{e-alt}
\overline X_M=\sum_{M\subseteq J\subseteq \Pi} (-1)^{|J|-|M|} X_J.
\end{equation}
For example, this formula follows via the Kazhdan-Lusztig
classification (and conjecture) \cite{L2} since all the $G(\rho^\vee)$-orbits on
$\fg_1(\rho^\vee)$ have smooth closure.

\begin{proposition}\label{p:wedge} Suppose $V_\bC^W=0$, i.e, the root
  system is semisimple. For every $M\subset\Pi,$ we have $\dim
  \Hom_W[\overline X_M,\bigwedge^* V_\bC]=1.$
\end{proposition}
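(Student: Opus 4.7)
My plan is to use the virtual character formula (\ref{e-alt}) to reduce the computation to a multiplicity calculation for the standard modules $X_J$, and then carry out a binomial identity. Since $\dim\Hom_W[-, \bigwedge^* V_\bC]$ only depends on the class of the first argument in the Grothendieck group of $W$-modules, (\ref{e-alt}) gives
\begin{equation*}
\dim\Hom_W[\overline X_M,\bigwedge\nolimits^* V_\bC]=\sum_{M\subseteq J\subseteq\Pi}(-1)^{|J|-|M|}\dim\Hom_W[X_J,\bigwedge\nolimits^* V_\bC].
\end{equation*}

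For each $J$, the Bernstein presentation $\bH\cong\bC[W]\otimes S(V_\bC)$ shows that, as a $W$-module, $X_J=\bH\otimes_{\bH_J}(\St_J\otimes\bC_{\nu_J})\cong\Ind_{W_J}^W\sgn_{W_J}$, since the Steinberg module $\St_J$ restricts to $\sgn_{W_J}$ on $W_J$. By Frobenius reciprocity,
\begin{equation*}
\Hom_W[X_J,\bigwedge\nolimits^* V_\bC]=\Hom_{W_J}[\sgn_{W_J},\bigwedge\nolimits^* V_\bC].
\end{equation*}

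The next step is to decompose $V_\bC$ as a $W_J$-module. Write $V_\bC=V_{J,\bC}\oplus(V_\bC)^{W_J}$, where $V_{J,\bC}=\mathrm{span}_\bC(J)$ carries the reflection representation of the Coxeter group $W_J$ on its natural space and $(V_\bC)^{W_J}$ has dimension $n-|J|$ (using that $V_\bC^W=0$, so that the rank of $W_J$ as a reflection group equals $|J|$). Then $\bigwedge^*V_\bC\cong\bigwedge^*V_{J,\bC}\otimes\bigwedge^*(V_\bC)^{W_J}$ as $W_J$-modules, and $W_J$ acts trivially on the second factor, which has total dimension $2^{n-|J|}$. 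On the first factor, a standard fact about finite reflection groups is that $\sgn_{W_J}$ appears in $\bigwedge^i V_{J,\bC}$ only for $i=|J|$ (where it agrees with the determinant character), and with multiplicity $1$. Hence $\dim\Hom_{W_J}[\sgn_{W_J},\bigwedge^* V_\bC]=2^{n-|J|}$.

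Substituting back and setting $k=|J|-|M|$, the number of $J$ with $J\supseteq M$ and $|J|-|M|=k$ is $\binom{n-|M|}{k}$, so
\begin{equation*}
\dim\Hom_W[\overline X_M,\bigwedge\nolimits^* V_\bC]=\sum_{k=0}^{n-|M|}\binom{n-|M|}{k}(-1)^k 2^{n-|M|-k}=(2-1)^{n-|M|}=1,
\end{equation*}
which is the claim. The only nontrivial input is the statement that $\sgn$ appears exactly once in $\bigwedge^* V_{J,\bC}$ when $V_{J,\bC}$ is the reflection representation of $W_J$; everything else is formal. I expect no serious obstacle, but some care is needed to justify that (\ref{e-alt}), which is a priori an identity in the Grothendieck group of $\bH$-modules, descends to a valid identity upon taking $W$-multiplicities, and that the decomposition $V_\bC=V_{J,\bC}\oplus(V_\bC)^{W_J}$ really has the second summand of dimension $n-|J|$ (which uses the semisimplicity assumption $V_\bC^W=0$).
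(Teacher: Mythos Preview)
Your proof is correct and follows essentially the same route as the paper: apply the character formula (\ref{e-alt}), use Frobenius reciprocity to reduce to $\Hom_{W_J}[\sgn,\bigwedge^*V_\bC]$, split $V_\bC$ as the reflection representation of $W_J$ plus a trivial piece of dimension $n-|J|$, and obtain $2^{n-|J|}$, after which the alternating sum collapses to $1$. The only cosmetic difference is that you invoke the binomial theorem explicitly for the final sum, whereas the paper records the identity $\sum_{M\subseteq J\subseteq\Pi}(-1)^{|J|-|M|}2^{|\Pi|-|J|}=1$ without comment.
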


\begin{proof}
The character formula (\ref{e-alt}) implies 
\begin{equation}\label{5.4.5}
\begin{aligned}
\Hom_W[\overline X_M,{\bigwedge}^*V]&=\sum_{J\supseteq M}
(-1)^{|J|-|M|}\Hom_W[X_J,{\bigwedge}^* V_\bC]\\
&=\sum_{J\supseteq M}
 (-1)^{|J|-|M|}\Hom_{W_J}[\sgn, {\bigwedge}^*(V_\bC)|_{W_J}],
\end{aligned}
\end{equation}
by Frobenius reciprocity. Let $V_{J,\bC}$ be the $\bC$-span of the
roots in $J$, and $V_{J,\bC}^\perp$ the orthogonal complement, so that
$V=V_{J,\bC}\oplus V_{J,\bC}^\perp.$ Notice that $W_J$ acts by the
reflection representation on $V_{J,\bC}$ and trivially on
$V_{J_\bC}^\perp.$ Since $${\bigwedge}^k (V_\bC)|_{W_J}=\bigoplus_{i=0}^k
{\bigwedge}^i V_{J,\bC}\otimes {\bigwedge}^{k-i}V_{J,\bC}^\perp,$$
we see that 
\begin{equation}
\dim\Hom_{W_J}[\sgn,{\bigwedge}^k(V_\bC)|_{W_J}]=\begin{cases}
  \dim{\bigwedge}^{k-j} V_{J,\bC}^\perp, & k\ge |J|,\\
0, &k<|J|.
\end{cases}
\end{equation}
This means that
$\dim\Hom_{W_J}[\sgn,{\bigwedge}^*(V_\bC)|_{W_J}]=\dim {\bigwedge}^*
V_{J,\bC}^\perp=2^{|\Pi|-|J|}$, and therefore
\begin{equation}\label{5.4.7}
\Hom_W[\overline X_M,{\bigwedge}^*V_\bC]=\sum_{M\subseteq J\subseteq\Pi}
(-1)^{|J|-|M|} 2^{|\Pi|-|J|}=1.
\end{equation}
\end{proof}

\begin{remark}\ 
\begin{enumerate}
\item An alternative proof of Proposition \ref{p:wedge} is as follows. Use 
  $${\bigwedge}^*V_\bC=\begin{cases} S\otimes S, &\text{ if }\dim
    V\text{ is even},\\ \frac 12 (S^++S^-)\otimes (S^++S^-),&\text{
      if }\dim V \text{ is odd}, \end{cases}$$
 and rewrite  in
equation (\ref{5.4.5}), $$\Hom_W[X_J,{\bigwedge}^*V_\bC]=a\Hom_{\wti
  W}[X_J\otimes \C S,\C S],$$ where $\C S=S$, $a=1$, if $\dim V$ is even, and
$\C S=S^++S^-$, $a=\frac 12$, if $\dim V$ is odd. Then we can apply
\cite[Lemma 2.6.2]{BC-wallach} which, in particular, gives the
dimension of this space, and we arrive at formula (\ref{5.4.7}).

\item Notice  that Proposition \ref{p:wedge} says that every simple $\bH$-module
$\overline X_M$ at
$\rho^\vee$ contains one and only one $W$-type that appears in 
$\bigwedge^* V_\bC.$ 
%These $W$-types can be regarded as ``highest'' $W$-types for these modules. 
It is worth recalling that when the root system is simple, every 
$\bigwedge^k V_\bC$  is in fact irreducible as a $W$-representation and that 
any two distinct exterior powers are non-isomorphic, see 
\cite[Theorem 5.1.4]{GP}.

\item Since
  $\dim\Hom_W[X(\rho^\vee),\bigwedge^*V_\bC]=\dim\Hom_W[\bC[W],\bigwedge^*V_\bC]=\dim\bigwedge^*V_\bC=2^{|\Pi|}$
 and this is also the number of distinct simple $\bH$-modules with
  central character $\rho^\vee$, Proposition \ref{p:wedge} implies
  that each irreducible constituent $\sigma$ of $\bigwedge^*V_\bC$
  occurs in exactly $\dim\sigma$ different such simple $\bH$-modules.
\end{enumerate}
\end{remark}

\begin{example}
 In the case of the Hecke algebra of type $A_{n-1}$, one can
  determine exactly which constituent of $\bigwedge^*V_\bC,$ where
  $V_\bC\cong \bC^{n-1}$, appears in each simple $\bH$-module with
  central character $\rho^\vee.$ Consider a composition of $n$, i.e., a $k$-tuple
  $(n_1,n_2,\dots,n_k)$ where $n_i>0$, $\sum n_i=n$ and let $\overline X
  (n_1,n_2,\dots,n_k)$ be the simple module at $\rho^\vee$
  corresponding to the subset of the simple roots
  $\C S(n_1,n_2,\dots,n_k):=\Pi\setminus\{\al_{n_1},\al_{n_1+n_2},\dots\}.$ Notice that the standard
  module $X(n_1,n_2,\dots,n_k)$ contains the hook $S_n$-representation
  $(k,1,1,\dots,1)=\bigwedge^{n-k} V_\bC$ with multiplicity
  $1$. Moreover, if $\C S(n_1,n_2,\dots,n_k)\subsetneq \C
  S(n_1',n_2',\dots,n'_{k'})$, then $k'<k$, and $(k,1,1,\dots,1)$ does
  not appear in the standard module
  $X(n'_1,n'_2,\dots,n'_{k'})$. Therefore, by (\ref{e-alt}), we see
  that $\bigwedge^{n-k} V_\bC$ appears with multiplicity $1$ in $\overline X
  (n_1,n_2,\dots,n_k)$.
\end{example}

\begin{corollary}\label{c:Dirac-coh-rho}
Suppose the root system is semisimple. For every simple $\bH$-module $\overline X$ with central character
$\rho^\vee$, $$H^D(\overline X)=S,$$
for an appropriate choice of spin module $S$. 
\end{corollary}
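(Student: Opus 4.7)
The plan is to identify $H^D(\overline X)$ with $\ker D$ on $\overline X\otimes S$ and then to compute that kernel as a $\wti W$-module. The key preliminary is $\bullet$-unitarity: by Theorem \ref{t:spherical} together with the observation just before the corollary, every simple $\bH$-module with central character $\rho^\vee$ is $\bullet$-unitary, so equation (\ref{H_D=kerD}) gives $H^D(\overline X) = \ker D$. Because the hermitian form on $\overline X\otimes S$ is positive definite and $D^\bullet = -D$, the identity $(Dv,Dv) = -(D^2v,v)$ immediately yields $\ker D = \ker D^2$, so it suffices to compute the latter.

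Next I would invoke Theorem \ref{t:dirac}: on $\overline X\otimes S$ the operator $\C D^2$ acts as $-\langle\rho^\vee,\rho^\vee\rangle + \Delta_{\wti W}(\Omega_{\wti W})$, since $\Omega$ is central in $\bH$ and acts by $\langle\rho^\vee,\rho^\vee\rangle$. Decomposing $\overline X\otimes S$ into $\wti W$-isotypic pieces, $\Omega_{\wti W}$ acts on the $\wti\sigma$-isotypic summand by the scalar $\wti\sigma(\Omega_{\wti W})$, so
\[
\ker D \;=\; \bigoplus_{\wti\sigma\,:\,\wti\sigma(\Omega_{\wti W}) = |\rho^\vee|^2}(\overline X\otimes S)[\wti\sigma].
\]
Since $S(\Omega_{\wti W}) = |\rho^\vee|^2$, the entire $S$-isotypic component sits inside $\ker D$. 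Its multiplicity is pinned down by Proposition \ref{p:wedge}: exploiting $\bigwedge^*V_\bC\cong S\otimes S^*$ in the even-dimensional case (and the analogous identity with half-spin modules in the odd case, as in the remark after that proposition), Frobenius reciprocity gives
\[
\dim \Hom_{\wti W}[S,\overline X\otimes S] = \dim \Hom_W[\overline X,{\bigwedge}^*V_\bC] = 1,
\]
so $S$ embeds into $H^D(\overline X)$ with multiplicity exactly one.

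To finish, I need to rule out any other genuine $\wti W$-type from $\ker D$; this is the step I expect to be the main obstacle. The numerical condition $\wti\sigma(\Omega_{\wti W}) = |\rho^\vee|^2$ is a priori weaker than the statement that the spin central character $\wti\chi(\wti\sigma)\in V_\bC^\vee/W$ equals $\rho^\vee$, and in general several distinct genuine $\wti W$-types can share the same eigenvalue $|\rho^\vee|^2$. To close this gap I would invoke the Hecke-algebra version of Vogan's conjecture from \cite{BCT}: any $\wti W$-type appearing in $H^D(\overline X)$ must satisfy $\wti\chi(\wti\sigma) = \cc(\overline X) = \rho^\vee$. Combined with the fact that $S$ (respectively $S^\pm$ for an appropriate choice of spin module in the odd-dimensional case) is the unique genuine irreducible $\wti W$-representation with $\wti\chi(\wti\sigma) = \rho^\vee$, this upgrades the eigenvalue decomposition to the desired isomorphism $H^D(\overline X) = S$.
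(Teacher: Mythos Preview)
Your argument tracks the paper's proof closely: the paper also uses $\bullet$-unitarity at $\rho^\vee$ to reduce $H^D(\overline X)$ to $\ker D$, then the formula for $\C D^2$ together with $S(\Omega_{\wti W})=|\rho^\vee|^2$ (citing \cite[Proposition~5.7]{BCT}) to see that the $S$-isotypic part of $\overline X\otimes S$ lies in $\ker D$, and finally the identification $S\otimes S^*\cong\bigwedge^*V_\bC$ (with its odd-dimensional variant) combined with Proposition~\ref{p:wedge} to obtain multiplicity one.

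The one place you go beyond the paper is your last paragraph. The paper's proof in fact stops after the multiplicity-one computation; it does not carry out the step of excluding other genuine $\wti W$-types $\wti\sigma$ with $\wti\sigma(\Omega_{\wti W})=|\rho^\vee|^2$ from $\ker D$. So the gap you flagged is one the paper leaves implicit. Your proposed fix via the Vogan-type theorem of \cite{BCT} is the natural route, but note that the final ingredient you invoke---that $S$ (respectively $S^\pm$) is the \emph{only} genuine irreducible $\wti W$-type whose Dirac central character equals $\rho^\vee$---is itself a nontrivial assertion not established anywhere in this paper, and would need an independent citation or argument to be airtight.
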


\begin{proof}
Let $\overline X_M$ be a simple $\bH$-module at $\rho^\vee$, $M\subset
\Pi.$ As remarked above, $\overline X_M$ is $\bullet$-unitary and
therefore $H^D(X)=\ker D.$ Since $S(\Omega_{\wti
  W})=\langle\rho^\vee,\rho^\vee\rangle=\langle\cc(\overline X_M),\cc(\overline X_M)\rangle,$
a known argument, e.g. \cite[Proposition 5.7]{BCT}, says that $\ker D$ is nonzero
if (in fact, if and only if) $S$ occurs in $\overline X_M\otimes
S$. But $\Hom_{\wti W}[\overline X_M\otimes S,S]=\Hom_W[\overline X_M,
S\otimes S^*],$ and 
\begin{equation}
\begin{aligned}
&S\otimes S^*={\bigwedge}^* V_\bC,\text{ when }\dim V_\bC\text{ is
  even},\\
&(S^++S^-)\otimes (S^++S^-)^*=2{\bigwedge}^* V_\bC,\text{ when }\dim V_\bC\text{ is
  odd}.\\
\end{aligned}
\end{equation}
Then Proposition \ref{p:wedge}, implies that $\dim\Hom_{\wti W}[\overline
X_M\otimes S,S]=1$ for some choice of $S$.

\end{proof}

%%%%%%%%%%%%%%%%%%%%%%%%%%%%%%%%%%%%%%%%%%%%%%%%%%%%%%%%%%%%%55
\ifx\undefined\bysame
\newcommand{\bysame}{\leavevmode\hbox to3em{\hrulefill}\,}
\fi

\end{document}